\documentclass{article} 
\usepackage[margin=3.5cm]{geometry} 
\RequirePackage{natbib}
\newcommand\keywords[1]{\textit{Keywords:} #1}

\usepackage{algorithmic}
\usepackage{amsthm,amscd,amsmath,amssymb,amsfonts,pdfsync,xargs,stmaryrd}
\usepackage{latexsym,bbm,ifthen,twoopt}
\usepackage{color,framed,xargs,amsthm}
\usepackage[draft]{fixme}

\usepackage{aliascnt}
\usepackage{hyperref}
\usepackage{enumitem}

\usepackage{euscript}
\usepackage{amsbsy,amsopn,amstext,amsxtra,enumitem,bbm}

\usepackage{epsf,epsfig,graphics, color}
\usepackage{ float, xargs}
\usepackage[textwidth=3cm, textsize=footnotesize]{todonotes}

\newtheorem{theorem}{Theorem}
\newaliascnt{proposition}{theorem}
\newtheorem{proposition}[proposition]{Proposition}
\aliascntresetthe{proposition}

\newaliascnt{lemma}{theorem}
\newtheorem{lemma}[lemma]{Lemma}
\aliascntresetthe{lemma}

\newaliascnt{corollary}{theorem}
\newtheorem{corollary}[corollary]{Corollary}
\aliascntresetthe{corollary}

\newaliascnt{definition}{theorem}

\aliascntresetthe{definition}

\newaliascnt{example}{theorem}

\aliascntresetthe{example}

\newaliascnt{remark}{theorem}
\newtheorem{remark}[remark]{Remark}
\aliascntresetthe{remark}


\newcommandx\Bbar[4][1=]{\ifthenelse{\equal{#1}{}}{\widetilde B_{#2}^{#4}\langle{Y_{#2} \rangle}}{\widetilde B_{#1}^{#4}\langle{\chunk{Y}{#1}{#2} \rangle}}}
\newcommandx\Cbar[4][1=]{\ifthenelse{\equal{#1}{}}{\widetilde C_{#2}^{#4}\langle{Y_{#2} \rangle}}{\widetilde C_{#1}^{#4}\langle{\chunk{Y}{#1}{#2} \rangle}}}

\newcommandx\Bhat[4][1=]{\widehat B_{#1}^{#4}\langle{Y_{0:#2} \rangle}}
\newcommandx\Chat[4][1=]{\widehat C_{#1}^{#4}\langle{Y_{0:#2} \rangle}}



\usepackage{xspace}
\newcommand\jsd{JSD\@\xspace}
\newcommand\pg{PG\@\xspace}
\newcommand\smc{SMC\@\xspace}

\newcommand\mcmc{MCMC\@\xspace}

\newcommand{\range}[2]{#1, \, \dots, \, #2}      
\newcommand{\prange}[2]{(#1, \, \dots, \, #2)}   

\newcommand\setP{\mathcal{P}(\Xset)}    
\newcommand\kernelPG{P_{T,N}}           
\newcommand\mineps[2]{\epsilon_{#1,#2}} 



\newcounter{hypA'}

\newenvironment{hyp}[1]{
\begin{enumerate}[label=(\textbf{\sf #1}-\arabic*),resume=hyp#1]\begin{sf}}
{\end{sf}\end{enumerate}}

\def\rset{\ensuremath{\mathbb{R}}}

\def\nset{\ensuremath{\mathbb{N}}}
\def\zset{\ensuremath{\mathbb{Z}}}

\renewcommand{\log}{\ln}


\newcommand{\be}{\begin{equation}}     
\newcommand{\ee}{\end{equation}}        



\newcommandx\supnorm[2][1=]{| #2 |^{#1}_\infty}
\newcommandx\lnorm[3][1=]{\lVert #2 \rVert^{#1}_{#3}}
\newcommandx\vnorm[2][1=]{\lVert #2 \rVert^{#1}}
\def\borel{\mathcal{B}}

\newcommand{\coint}[1]{\left[#1\right)}

\newcommand{\ooint}[1]{\left(#1\right)}

\newcommandx\cov[3][1=]{\ensuremath{\mathrm{Cov}_{#1}\left( #2,#3 \right)}}
\newcommandx\var[2][1=]{\ensuremath{\mathrm{Var}_{#1}\left( #2\right)}}
\newcommandx\cvar[4][1=,2=]{\ensuremath{\mathrm{Var}^{#2}_{#1}\left(  #3 \mid #4 \right)}}
\newcommandx\ccov[3][1=]{\ensuremath{\mathrm{Cov}_{#1}\left(  #2 \mid #3 \right)}}

\newcommand{\ci}[4][]%
{%
\ifthenelse{\equal{#1}{}}{\ensuremath{#2 \perp\!\!\!\perp #3 \mid #4 }}{\ensuremath{#2 \perp\!\!\!\perp #3 \mid #4 \; \: [#1]}}%
}

\newcommandx\bprob[2][1=,2=]{\ensuremath{\bar{{\mathbb P}}_{#1}^{#2}}}
\newcommandx\prob[2][1=,2=]{\ensuremath{{\mathbb P}_{#1}^{#2}}}
\newcommandx\besp[2][1=,2=]{\ensuremath{\bar{{\mathbb E}}_{#1}^{#2}}}
\newcommandx\esp[2][1=,2=]{\ensuremath{{\mathbb E}_{#1}^{#2}}}
\newcommandx\espp[3][1=,2=]{\ensuremath{{\mathbb E}_{#1}^{#2} \left[ #3 \right]}}
\newcommandx\cesp[4][1=,2=]{\ensuremath{{\mathbb E}_{#1}^{#2}\left[  #3 \bigm| #4 \right]}}
\newcommandx\cespcheck[4][1=,2=]{\ensuremath{\check{{\mathbb E}}_{#1}^{#2}\left[  #3 \bigm| #4 \right]}}
\newcommandx\cbesp[4][1=,2=]{\ensuremath{\bar{{\mathbb E}}_{#1}^{#2}\left[  #3 \bigm| #4 \right]}}
\newcommandx\cprob[4][1=,2=]{\ensuremath{{\mathbb P}_{#1}^{#2}\left[  #3 \bigm| #4 \right]}}
\newcommandx\cprobcheck[4][1=,2=]{\ensuremath{\check{{\mathbb P}}_{#1}^{#2}\left[  #3 \bigm| #4 \right]}}
\newcommandx\cbprob[4][1=,2=]{\ensuremath{\bar{{\mathbb P}}_{#1}^{#2}\left[  #3 \bigm| #4 \right]}}
\newcommandx\sequence[3][2=t,3=\zset]
{\ifthenelse{\equal{#3}{}}{\ensuremath{\{ #1_{#2}\}}}{\ensuremath{\{ #1_{#2}, \eqsp #2 \in #3 \}}}}

\newcommandx\dsequence[4][3=t,4=\zset]{\ensuremath{\{ (#1_{#3}, #2_{#3}), \eqsp #3 \in #4 \}}}

\newcommandx\proba[3][1=,3=]{{\mathbb P}_{#1}^{#3} (#2)}

\newcommand{\rme}{\mathrm{e}}
\newcommand{\rmd}{\mathrm{d}}

\newcommandx\cespt[4][1=,2=]{\ensuremath{{\mathbb E}_{#1}^{#2}[  #3 \bigm| #4 ]}}

\newcommandx{\plone}[1][1=\zset]{{\ensuremath{\mathrm{\ell}^1}(#1)}}

\newcommand{\1}{\ensuremath{\mathbbm{1}}}

\newcommand{\wrt}{with respect to}

\newcommand{\rhs}{right-hand side}

\newcommand{\iid}{i.i.d.}
\newcommandx{\as}[1][1=\PP]{\ensuremath{#1\-\mathrm{a.s.}}}

\newcommand{\ie}{i.e.}
\newcommand{\eg}{e.g.}

\newcommand{\eqsp}{\;}
\newcommand{\eqdef}{\ensuremath{:=}}

\def\mcf{\mathcal{F}}
\newcommand{\mcff}[2]{\mathcal{F}_{#1}^{#2}}

\newcommand{\PP}[1][]{\ifthenelse{\equal{#1}{}}{\ensuremath{\mathbb{P}}}{\ensuremath{\mathbb{P}\left[ #1 \right]}}}
\newcommand{\PPstat}[1][]{\ifthenelse{\equal{#1}{}}{\ensuremath{\bar{\mathbb{P}}}}{\ensuremath{\bar{ \mathbb{P}}\left[ #1 \right]}}}

\newcommand{\PPup}[2][1=]{\ifthenelse{\equal{#1}{}}{\ensuremath{\mathbb{P}}}{\ensuremath{\mathbb{P}^{#1}\left[ #2 \right]}}}
\newcommand{\PPdoup}[3][1=,2=]{
\ensuremath{\mathbb{P}_{#1}^{#2}\left[ #3 \right]}}
\newcommandx{\PE}[3][1=,3=]{\ifthenelse{\equal{#2}{}}{\ensuremath{\mathbb{E}^{#1}_{#3}}}{\ensuremath{{\mathbb{E}^{#1}_{#3}}\left[ #2 \right]}}}
\newcommandx{\PEstat}[3][1=,3=]{\ifthenelse{\equal{#2}{}}{\ensuremath{\bar{\mathbb{E}}^{#1}_{#3}}}{\ensuremath{{\bar{\mathbb{E}}^{#1}_{#3}}\left[ #2 \right]}}}

\newcommand{\CPE}[3][]
{\ifthenelse{\equal{#1}{}}{{\mathbb E}\left[\left. #2 \, \right| #3 \right]}{{\mathbb E}_{#1}\left[\left. #2 \, \right | #3 \right]}}
\newcommand{\CPEup}[3][]
{\ifthenelse{\equal{#1}{}}{{\mathbb E}\left[\left. #2 \, \right| #3 \right]}{{\mathbb E}^{#1}\left[\left. #2 \, \right | #3 \right]}}
\newcommand{\CPEdoup}[4][1=,2=]
{{\mathbb E}_{#1}^{#2}\left[\left. #3 \, \right| #4 \right]}
\newcommand{\CPP}[3][]
{\ifthenelse{\equal{#1}{}}{{\mathbb P}\left(\left. #2 \, \right| #3 \right)}{{\mathbb P}_{#1}\left(\left. #2 \, \right | #3 \right)}}
\newcommand{\CPPup}[3][]
{\ifthenelse{\equal{#1}{}}{{\mathbb P}\left[\left. #2 \, \right| #3 \right]}{{\mathbb P}^{#1}\left[\left. #2 \, \right | #3 \right]}}
\newcommand{\CPPdoup}[4][1=,2=]
{{\mathbb P}_{#1}^{#2}\left[\left. #3 \, \right| #4 \right]}
\newcommandx{\PVar}[1][1=]{\ensuremath{\operatorname{Var}_{#1}}}

\newcommandx{\PCov}[1][1=]{\ensuremath{\operatorname{Cov}}_{#1}}

\newcommandx{\VnormFunc}[3][1=]{\ensuremath{\left|#2\right|^{#1}_{\mathrm{#3}}}}

\newcommandx{\oscnorm}[3][1=,3=]%
{\operatorname{osc}^{#1}_{#3}\left(#2\right)}

\newcommandx{\Vdist}[3][1=V]{d_{#1}(#2, #3)}
\newcommandx\functionset[2][1=]{\mathbb{F}_{#1}(\mathsf{#2},\mathcal{#2})}
\newcommandx\functionsetspec[1][1=]{\mathbb{F}_{#1}}

\newcommandx\measureset[2][1=]{\mathbb{M}_{#1}(\mathcal{#2})}
\newcommandx\measuresetspec[2][1=]{\mathbb{M}_{#1}({#2})}
\newcommandx\ball[3][1=]{\mathrm{B}^{#1} (#2,#3)}
\newcommandx{\wassersym}[1][1=d]{\mathbf{W}_{#1}}
\newcommandx{\wasser}[3][1=d]{\mathbf{W}_{#1}\left(#2,#3\right)}
\newcommandx{\prohosym}[1][1=d]{\boldsymbol{\rho}_{#1}}
\newcommandx{\proho}[3][1=d]{\boldsymbol{\rho}_{#1}\left(#2,#3\right)}
\newcommandx{\dualblsym}[1][1=d]{\boldsymbol{\beta}_{#1}}
\newcommandx{\dualbl}[4][1=d,2=]{\boldsymbol{\beta}^{#2}_{#1}\left(#3,#4\right)}
\newcommandx{\Vdistance}[1][1=V]{d_{#1}}


\newcommandx{\aslim}[1]{\ensuremath{\stackrel{#1\-\text{a.s.}}{\longrightarrow}}}
\newcommandx{\plim}[1]{\ensuremath{\stackrel{#1}{\longrightarrow}}}
\newcommandx{\dlim}[1]{\ensuremath{\stackrel{#1}{\Longrightarrow}}}

 
 \newcommandx{\taslim}[1]{\ensuremath{\rightarrow_{#1\-\text{a.s.}}}}
 \newcommandx{\tplim}[1]{\ensuremath{\rightarrow_{#1}}}
 \newcommandx{\tdlim}[1]{\ensuremath{\Rightarrow_{#1}}}




\newcommand{\Xset}{\ensuremath{\mathsf{X}}}

\newcommand{\Xsigma}{\ensuremath{\mathcal{X}}}

\newcommand{\Yset}{\ensuremath{\mathsf{Y}}}
\newcommand{\Ysigma}{\ensuremath{\mathcal{Y}}}


\newcommand{\Zset}{\ensuremath{\mathsf{Z}}}
\newcommand{\Zsigma}{\ensuremath{\mathcal{Z}}}

\newcommand{\chunk}[4][]%
{\ifthenelse{\equal{#1}{}}{\ensuremath{{#2}_{#3:#4}}}{\ensuremath{#2^{#1}}_{#3:#4}}
}

\newcommandx{\pred}[4][1=,4=]%
{%
\ifthenelse{\equal{#1}{}}{\ensuremath{\phi^{#4}_{#2|#3}}}{\ensuremath{\phi^{#4}_{#1,#2|#3}}}%
}

\newcommandx{\adjfunc}[4][1=,3=,4=]
{\ifthenelse{\equal{#1}{}}{\ifthenelse{\equal{#3}{}}{\vartheta^{#4}_{#2}}{\vartheta^{#4}_{#2}(#3)}}
{\ifthenelse{\equal{#1}{smooth}}{\ifthenelse{\equal{#3}{}}{\tilde{\vartheta}^{#4}_{#2}}{\tilde{\vartheta}^{#4}_{#2}(#3)}}
{\ifthenelse{\equal{#1}{fully}}{\ifthenelse{\equal{#3}{}}{\vartheta^\star_{#2}}{\vartheta^\star_{#2}(#3)}}{\mathrm{erreur}}}}}

\newcommandx{\post}[4][1=,4=]%
{
\ifthenelse{\equal{#1}{}}
{
\ifthenelse{\equal{#4}{}}{\ensuremath{\phi_{#2}\langle #3 \rangle}}{\ensuremath{\phi_{#2}^{#4}\langle #3 \rangle}}
}%
{\ifthenelse{\equal{#1}{hat}}
{
\ifthenelse{\equal{#4}{}}{\ensuremath{\phi^N_{#2}\langle #3 \rangle}}{\ensuremath{\phi_{#2}^{N,#4}\langle #3 \rangle}}
}
{\ifthenelse{\equal{#1}{tilde}}{\ensuremath{\tilde{\phi}^{N}_{#2}\langle #3 \rangle}}}
}
}

\newcommandx{\filt}[3][1=,3=]%
{
\ifthenelse{\equal{#1}{}}
     {\ensuremath{\phi^{#3}_{#2}}}%
     {\ifthenelse{\equal{#1}{hat}}{\ifthenelse{\equal{#3}{}}{\ensuremath{\phi^{N}_{#2}}}{\ensuremath{\phi^{N,#3}_{#2}}}}
{\ifthenelse{\equal{#1}{tilde}}{\ifthenelse{\equal{#3}{}}{\ensuremath{\tilde{\phi}^{N}_{#2}}}{\ensuremath{\tilde{\phi}^{N,#3}_{#2}}}}
{\ifthenelse{\equal{#1}{tar}}{\ensuremath{\phi^{N,\mathrm{tar}}_{#2}}}
{\ifthenelse{\equal{#1}{aux}}{\ensuremath{\phi^{N,\mathrm{aux}}_{#2}}}
{\ifthenelse{\equal{#1}{prop}}{\ensuremath{\phi^{N,\mathrm{prop}}_{#2}}}
{\ensuremath{\phi^{#1}_{#2}}
}
}
}
}
}
}
}

\newcommandx{\unfilt}[3][1=,3=]%
{
\ifthenelse{\equal{#1}{}}
     {\ensuremath{\gamma^{#3}_{#2}}}%
     {\ifthenelse{\equal{#1}{hat}}{\ifthenelse{\equal{#3}{}}{\ensuremath{\kappa^{N}_{#2}}}{\ensuremath{\kappas^{N,#3}_{#2}}}}
{\ifthenelse{\equal{#1}{tilde}}{\ifthenelse{\equal{#3}{}}{\ensuremath{\tilde{\gamma}^{N}_{#2}}}{\ensuremath{\tilde{\gamma}^{N,#3}_{#2}}}}
{\ifthenelse{\equal{#1}{tar}}{\ensuremath{\gamma^{N,\mathrm{tar}}_{#2}}}
{\ifthenelse{\equal{#1}{aux}}{\ensuremath{\gamma^{N,\mathrm{aux}}_{#2}}}
{\ifthenelse{\equal{#1}{prop}}{\ensuremath{\gamma^{N,\mathrm{prop}}_{#2}}}
{\ensuremath{\gamma^{#1}_{#2}}
}
}
}
}
}
}
}

\newcommandx{\normLip}[2][1=d]{\left|#2\right|_{\mathrm{L}(#1)}}
\newcommandx{\normBL}[2][1=d]{\left|#2\right|_{\mathrm{BL}(#1)}}
\newcommandx{\dualnormBL}[2][1=d]{\|#2\|_{\mathrm{BL}(#1)}}
\newcommandx{\duallipsym}[1][1=d]{\boldsymbol{\gamma}_{#1}}
\newcommandx{\duallip}[4][1=d,2=]{\boldsymbol{\gamma}^{#2}_{#1}\left(#3,#4\right)}

\newcommandx{\wass}[2][1=d]{\lVert #2\rVert_{\mathrm{L}(#1)}}

\newcommandx{\dobru}[2][1=\mathrm{TV}]{\dobrush_{#1}\left( #2\right)}
\newcommand{\dobrush}{\Delta}


\def\epartsymb{X}

\newcommand{\etpart}[2][]
{%
\ifthenelse{\equal{#1}{}}{\ensuremath{\tilde{\epartsymb}^{#2}}}{\ensuremath{\tilde{\epartsymb}^{#2}_{#1}}}
}

\newcommand{\Xinitis}[2]{r_0\langle #1 \rangle(#2)}
\newcommand{\Xinitisf}[1]{r_0\langle #1 \rangle}
\newcommand{\Xinit}[1]{\mu(#1)}
\newcommand{\Xinitv}{\mu}
\newcommand{\ewght}[2]{\omega_{#1}^{#2}}
\newcommand{\epart}[2]{X_{#1}^{#2}}
\newcommandx{\ewghtfunc}[4][1=]{w^{#1}\langle #2 \rangle(#3, #4)}
\newcommandx{\ewghtfuncf}[2][1=]{w^{#1}\langle #2 \rangle}
\newcommandx{\ewghtfuncfInit}[2][1=]{w^{#1}_0\langle #2 \rangle}
\newcommandx{\ewghtfuncz}[3][1=]{w^{#1}\langle #2 \rangle(#3)}
\newcommandx{\kun}[4][1=]{q^{#1}\langle #2 \rangle(#3, #4)}
\newcommandx\Kun[4][1=]{Q^{#1}\langle #2 \rangle(#3, #4)}
\newcommandx\Kunf[2][1=]{Q^{#1}\langle #2 \rangle}
\newcommandx{\kis}[4][1=]{r^{#1}\langle #2 \rangle(#3, #4)}
\newcommandx{\Kis}[4][1=]{R^{#1}\langle #2 \rangle(#3, #4)}
\newcommandx\Kisf[2][1=]{R^{#1}\langle #2 \rangle}

\newcommand{\logl}[2][]%
{%
\ifthenelse{\equal{#1}{}}{\ensuremath{\ell_{#2}}}{\ensuremath{\ell_{#1,#2}}}%
}
\newcommand{\lnl}[2][]%
{%
\ifthenelse{\equal{#1}{}}{\ensuremath{\ell_{#2}}}{\ensuremath{\ell_{#1,#2}}}%
}



\newcommand{\VRoot}{\ensuremath{S}}
\newcommand{\VCov}[1][]%
{%
\ifthenelse{\equal{#1}{}}{\VRoot \VRoot^t}{\VRoot_{#1} \VRoot^t_{#1}}%
}



\newcommandx{\mcfpart}[2][1=]{%
\ifthenelse{\equal{#1}{}}{{\mathcal{F}^{#2}}}{\mathcal{F}_{#1}^{#2}}%
}

\newcommand{\tsumweight}[2][]{%
\ifthenelse{\equal{#1}{}}{\ensuremath{\widetilde{\Omega}^{#2}}}{\ensuremath{\widetilde{\Omega}_{#1}^{#2}}}}

\newcommandx{\BK}[2][2=]{B^{#2}_{#1}}
\newcommandx{\bk}[2][2=]{b^{#2}_{#1}}

\newcommand{\esssup}[2][]%
{\ifthenelse{\equal{#1}{}}{| #2 |_\infty}{| #2 |^{#1}_{\infty}}}
\newcommandx{\F}[2][2=]{F_{#1}^{#2}}

\newcommand{\asymVar}[4][]{
\ifthenelse{\equal{#1}{}}{\ifthenelse{\equal{#4}{}}{\ensuremath{\Gamma_{#2|#3}}}{\ensuremath{\Gamma_{#2|#3}\left[#4\right]}}}
{\ifthenelse{\equal{#4}{}}{\ensuremath{\Gamma_{#1,#2|#3}}}{\ensuremath{\Gamma_{#1,#2|#3}\left[#4\right]}}}
}
\newcommand{\incrasymVar}[4][]{
\ifthenelse{\equal{#1}{}}{\ensuremath{\sigma^2_{#2,#3}\left[#4\right]}}{\ensuremath{\sigma^2_{#1,#2,#3}\left[#4\right]}}
}

\def\param{\theta}

                                       \newcommand{\Param}{\ensuremath{\Theta}}

\newcommand{\tparam}{\ensuremath{{\theta_\star}}}
\newcommand{\tpi}{\ensuremath{{\pi^{\tparam}}}}

\newcommandx{\m}[1][1=]
{\ifthenelse{\equal{#1}{}}{\ensuremath{m}}{\ensuremath{m^{#1}}}}
\newcommandx{\M}[1][1=]
{\ifthenelse{\equal{#1}{}}{\ensuremath{M}}{\ensuremath{M^{#1}}}}

\newcommandx{\g}[1][1=]
{\ifthenelse{\equal{#1}{}}{\ensuremath{g}}{\ensuremath{g^{#1}}}}

\newcommandx{\lhood}[3][1=]%
{\operatorname{L}_{#1}({#2};\, {#3})
}

\newcommandx{\hlhood}[4][1=,4=]%
{\widehat{\operatorname{L}}^{#4}_{#1}({#2};\, {#3})
}

\newcommandx{\cdens}[4][1=,4=]
{p^{#4}_{#1}(#2|#3)}
\newcommandx{\cdensstat}[4][1=,4=]
{\bar{p}^{\thinspace #4}_{#1}(#2|#3)}

\newcommandx{\dens}[3][1=,3=]
{p^{#3}_{#1}(#2)}
\newcommandx{\densstat}[3][1=,3=]
{\bar{p}^{\thinspace #3}_{#1}(#2)}

\newcommandx{\hdens}[3][1=,2=]%
{
\ifthenelse{\equal{#1}{}}
{\ifthenelse{\equal{#2}{}}{\hat{p}(#3)}{\hat{p}^{#2}(#3)}}
{\ifthenelse{\equal{#2}{}}{\hat{p}_{#1}(#3)}{\hat{p}_{#1}^{#2}(#3)}}
}


\newcommandx{\deriv}[1][1=]{\nabla_{#1}}

\newcommandx\lkdM[3][1=,3=]{
\ifthenelse{\equal{#2}{}}
{ \mathsf{L}_{#1}^{#3}}
{ \mathsf{L}_{#1}^{#3}(#2)}
}
\newcommandx\lkdMStat[3][1=,3=]{
\ifthenelse{\equal{#2}{}}
{ \bar{\mathsf{L}}_{#1}^{#3}}
{ \bar{\mathsf{L}}_{#1}^{#3}(#2)}
}

\renewcommand{\-}{\mbox{-}}


\newcommand{\CPEu}[3][]
{\ifthenelse{\equal{#1}{}}{{\mathbb E}\left[\left. #2 \, \right| #3 \right]}{{\mathbb E}^{#1}\left[\left. #2 \, \right | #3 \right]}}

\newcommandx{\f}[2][1=\theta]{f^{#1}_{#2}}

\def\bigone{\mathbf{1}}

%

%

%

%

\title{Uniform ergodicity of the Particle Gibbs sampler}

\author{
  Fredrik Lindsten\\
  Division of Automatic Control\\
  Link{\"o}ping University, Link\"oping. Sweden\\
  \and
  Randal Douc \\
  Department  CITI, Institut Mines-Telecom/CNRS UMR 5157 \\
  Telecom Sudparis, Evry, France\\
  \and
  Eric Moulines \\
  Department  LTCI, Institut Mines-Telecom/CNRS UMR 5141 \\
  Telecom Paristech, Paris, France\\
}

\date{January 23, 2014}

\begin{document}
\maketitle

\begin{abstract}
The particle Gibbs (\pg) sampler is a systematic way of using a particle filter within Markov chain Monte Carlo (MCMC).
This results in an off-the-shelf Markov kernel on the space of state trajectories, which can be used to simulate from the
full joint smoothing distribution for a state space model in an MCMC scheme.
We show that the PG Markov kernel is uniformly ergodic under rather general assumptions, that we will carefully review and discuss.
In particular, we provide an explicit rate of convergence which reveals that:
\emph{(i)} for fixed number of data points, the convergence rate can be made arbitrarily good by increasing the number of particles, and
\emph{(ii)} under general mixing assumptions, the convergence rate can be kept constant by increasing the number of particles
superlinearly with the number of observations. We illustrate the applicability of our result by studying in detail
two common state space models with non-compact state spaces. \par
\vspace{1ex}
\keywords{Particle Gibbs, Particle Markov chain Monte Carlo, Conditional sequential Monte Carlo, Particle smoothing, State space models}
\end{abstract}

\section{Introduction}
Statistical inference in general state space hidden Markov models involves computation 
of the posterior distribution of a set
 $\chunk{X}{t}{t'} \eqdef [X_t, \dots, X_{t'}]$ of hidden state variables conditionally on a record $\chunk{Y}{0}{T}$ of observations,
which we denote as $\post{t:t'}{\chunk{Y}{0}{T}}$.
Of particular interest is the so called \emph{joint smoothing distribution} (\jsd) $\post{0:T}{\chunk{Y}{0}{T}}$.
Any marginal or fixed-interval smoothing distribution can be obtained from the \jsd by marginalization.
The \jsd can be expressed in closed-form only in very specific cases, principally, when the state space model is linear and Gaussian or when the state space of the hidden Markov chain is a finite set. In the vast majority of cases, nonlinearity or non-Gaussianity render analytic solutions intractable.

This limitation has lead to an increase of interest in computational strategies handling more general state and measurement equations. Among these, \emph{sequential {M}onte {C}arlo} (SMC) methods play a central role. SMC methods---in which the \emph{sequential importance sampling} and \emph{sampling importance resampling} methods proposed by \citet{HandschinM:1969} and \citet{Rubin:1987}, respectively, are combined---refer to a class of algorithms approximating a sequence of probability distributions, defined on a sequence of probability spaces. This is done by updating recursively a set of random \emph{particles} with associated nonnegative importance weights. The SMC methodology has emerged as a key tool for approximating \jsd flows in general state space models; see \citet{delmoral:2004,DelMoralD:2009,DoucetJ:2011} for general introductions as well as applications and theoretical results for SMC methods.

However, a well known problem with SMC methods is that the particle approximation of any marginal smoothing distribution
$\post{t:t}{\chunk{Y}{0}{T}}$ becomes inaccurate for $t \ll T$. The reason is that the particle trajectories degenerate gradually as the interacting particle
system evolves \citep{GodsillDW:2004,FearnheadWT:2010}.
To address this problem, several methods have been proposed; see \citet{LindstenS:2013} and the references therein.
Among these methods, the recently introduced particle Markov chain Monte Carlo (PMCMC) framework,
proposed in the seminal paper by \citet{AndrieuDH:2010}, plays a prominent role.
PMCMC samplers make use of \smc (or variants thereof) to construct efficient, high-dimensional \mcmc kernels which are reversible \wrt\ the \jsd.
These methods can then be used as components of more general sampling schemes relying on Markov kernels,
for instance enabling joint state and parameter inference in general state space models.
We will not discuss such composite sampling schemes in this paper, but instead focus on one of the PMCMC kernels that can be used to simulate from the \jsd.

Coupling SMC and MCMC is very useful since the distribution of the state sequence given the stream of observations
is generally both high-dimensional and strongly dependent, rendering the design of alternative \mcmc procedures,
such as single-state 
Gibbs samplers and Metropolis-Hastings samplers, problematic.
PMCMC has already found many applications in areas such as hydrology \citep{VrugtBDS:2013},
finance \citep{PittSGK:2012}, systems biology \citep{GolightlyW:2011}, and epidemiology \citep{RasmussenRK:2011}, to mention a few.
Several methodological developments of the framework have also been made; see \eg\ \citet{WhiteleyAD:2010,LindstenJS:2012,ChopinS:2013,PittSGK:2012}.

PMCMC algorithms can, broadly speaking, be grouped into two classes of methods: those based on particle independent Metropolis-Hastings (PIMH) kernels and those based on particle Gibbs (\pg) kernels. The two classes of kernels are motivated in different ways and they have quite different
properties. The former class, PIMH, exploits the fact that the \smc method defines an unbiased
estimator of the likelihood, which is used in place of the intractable likelihood in the MH acceptance probability.
This method can thus be viewed as a special case of the pseudo-marginal method introduced by \citet{beaumont:2003,AndrieuR:2009} and later analyzed by
\citet{AndrieuV:2012,lee:latuszynski:2012}.
The latter class, \pg, on the other hand relies on
conditioning the underlying \smc sampler on a reference trajectory to enforce the correct limiting distribution of the kernel; see \autoref{sec:pg}.
This algorithm can be interpreted as a Gibbs sampler for an extended model where the random variables generated by the \smc sampler are treated as auxiliary variables.

One of the main practical issues with PMCMC algorithms is the choice of the number, $N$, of  particles.
Using fewer particles will result in faster computations at each iteration, but can at the same time
result in slower mixing of the resulting Markov kernel.
For a fixed computational budget, there is a trade-off between
taking the number of particles $N$ large to get a faster mixing kernel, and to run many iterations of the MCMC sampler.
\citet{AndrieuR:2009,AndrieuV:2012,lee:latuszynski:2012} investigate 
the rate of convergence of the pseudo-marginal method and characterize the approximation of the marginal algorithm by the pseudo-marginal algorithm
in terms of the variability of their respective ergodic averages.
\citet{DoucetPK:2012} and \citet{PittSGK:2012} conclude, using partially heuristic arguments,
that it is close to optimal to let $N$ scale at least linearly with $T$.

The theoretical properties of the \pg kernel, however, are not as well understood. 
\citet{AndrieuDH:2010} establish under weak conditions that the \pg kernel is $\phi$-irreducible and aperiodic for any $N\geq 2$
(see \citet{MeynT:2009} for definitions).
However, this does not provide a control for the rate of convergence of the iterates of the \pg kernel to stationarity.
In this work, we establish that the \pg kernel is, under mild assumptions, uniformly ergodic. This interesting property has already been established in an earlier
work by \citet{ChopinS:2013}, but we give here a more straightforward proof under weaker conditions, which in addition
provides an explicit lower bound for the convergence rate.

During the preparation of this manuscript, a preprint was made available by \citet{AndrieuLV:2013}, who, independently, have found similar results as presented
here. Indeed, they establish basically the same lower bound on the minorizing constant for the \pg kernel
(which they refer to as the iterated conditional SMC kernel), though using a different proof technique based on
a ``doubly conditional'' SMC algorithm.
There are, however, several differences between these two contributions. We focus in particular on analyzing
the minorizing constant under mixing conditions for the state space model
which hold very generally, even if the state space is not compact (see \autoref{sec:moment-assumption}).
We then study how the number of particles $N$ should be increased
with the number of observations $T$. We show that under weak assumptions,
it suffices to increase the number of particles $N$ as  $T^\delta$ where $\delta\geq 1$ can
be determined explicitly. 
This is in contrast with \citet{AndrieuLV:2013} who, effectively, assume a compact state space; see \autoref{rem:comparison-alv} and \autoref{sec:strong-mixing}.
On the other hand, \citet{AndrieuLV:2013} study necessary (\ie, not only sufficient) conditions for uniform ergodicity
and translate the convergence results for the \pg kernel to a composite \mcmc scheme for simulating
both states and parameters of a state space model.
Given these differences, we believe that the two contributions complement each other.

This paper is organized as follows: In \autoref{sec:notation-prob-form} we introduce our notation,
and in \autoref{sec:pg} we review the \pg sampler and
formally define the \pg Markov kernel. In \autoref{sec:main-result} we state the main results,
starting with a minorization condition for the \pg kernel followed by mixing conditions that allow for
time uniform control of the convergence rate.
In \autoref{sec:examples} we study, in detail, two commonly used
state space models (with non-compact state spaces) to illustrate how the conditions of our results can be verified
in practice.
The proofs of the main theorems are postponed to Sections~\ref{sec:proof}~and~\ref{sec:proof:prop:N-depend-on-T}.

\section{Notations and problem statement}\label{sec:notation-prob-form}%
Let $(\Xset,\Xsigma)$ and $(\Yset,\Ysigma)$ be two measurable spaces
and let $\setP$ be the set of all probability measures on $(\Xset,\Xsigma)$.
Let $M$ be a kernel on $(\Xset,\Xsigma)$ and $G$ a kernel on $(\Xset,\Ysigma)$. Assume that for all $x\in \Xset$, $G(x,\cdot)$ is dominated by some common nonnegative measure $\kappa$ on $(\Yset,\Ysigma)$ and denote by $g(x,\cdot)$ its Radon-Nikodym derivative, \ie, for all $(x,y) \in \Xset \times \Yset$,
$$
g(x,y)=\frac{\rmd G(x,\cdot)}{\rmd \kappa(\cdot)}(y)\eqsp.
$$
Let $\{(X_t,Y_t)\,, t \in \nset\}$ be a hidden Markov chain associated to the pair $(M,G)$. That is, $\{(X_t,Y_t)\,, t \in \nset\}$ is a Markov chain with transition kernel defined by: for all $(x,y) \in \Xset \times \Yset$ and all $C \in \Xsigma\otimes \Ysigma$,
$$
((x,y), C) \mapsto \iint_{C} M(x,\rmd x') G(x',\rmd y')\eqsp.
$$
The sequence \sequence{X}[t][\nset] is usually not observed and inference should be carry out on the basis of the observations \sequence{Y}[t][\nset] only.
With $\Xinitv \in \setP$ being the initial distribution of the hidden state process, for all $t\geq 0$, denote by
$$
\chunk{y}{0}{t} \mapsto \dens[\Xinitv]{\chunk{y}{0}{t}}\eqdef \int \Xinitv(\rmd x_0) g(x_0,y_0) \prod_{s=1}^t M(x_{s-1},\rmd x_s) g(x_s,y_s) \eqsp,
$$
the density of the observations $\chunk{Y}{0}{t}$ with respect to $\kappa^{\otimes (t+1)}$.
In what follows, we set, by abuse of notation, for all $x \in\Xset$,
\begin{equation}
\label{eq:def:p-delta}
\dens[x]{\chunk{y}{0}{t}}=\dens[\delta_x]{\chunk{y}{0}{t}} \eqsp,
\end{equation}
where $\delta_x$ is the Dirac measure at $x$.

For all $y  \in \Yset$, define the (unnormalized) kernel $\Kunf{y}$ on $(\Xset,\Xsigma)$ by
\begin{equation}
\label{eq:def-Kun}
\Kun{y}{x}{A}= \int M(x,\rmd x') g(x',y) \1_A(x') \eqsp,
\end{equation}
and for all $s \leq t$ and all $\chunk{y}{s}{t} \in\Yset^{t-s+1}$, define the kernel $\Kunf{\chunk{y}{s}{t}}$ on $(\Xset,\Xsigma)$ by
\begin{equation}
\label{eq:def-chunk-Kun}
\Kun{\chunk{y}{s}{t}}{x}{A}= \Kunf{y_s} \Kunf{y_{s+1}} \dots \Kun{y_t}{x}{A} \eqsp.
\end{equation}
In what follows, we set by convention $\Kun{\chunk{y}{s}{t}}{x}{A}= 1$ for all $s>t$.
With these notations, $\dens[\Xinitv]{\chunk{y}{0}{t}} = \Xinitv \Kunf{\chunk{y}{0}{t}}\bigone$ where $\bigone$ is the constant function, $\bigone(x) = 1$ for all $x \in \Xset$.
For all $\Xinitv\in\setP$ and for all $0\leq s\leq t$, denote
\begin{align*}
& \cdens[\Xinitv]{\chunk{y}{s}{t}}{\chunk{y}{0}{s-1}} \eqdef \begin{cases}
 \dens[\Xinitv]{\chunk{y}{0}{t}}/ \dens[\Xinitv]{\chunk{y}{0}{s-1}}\,, &\mbox{if } \dens[\Xinitv]{\chunk{y}{0}{s-1}} \neq 0\eqsp,\\
 0\,, &\mbox{otherwise}
 \end{cases}
\end{align*}
with the convention $\cdens[\Xinitv]{\chunk{y}{0}{t}}{\chunk{y}{0}{-1}}=\dens[\Xinitv]{\chunk{y}{0}{t}}$.

A quantity of central interest is the \jsd, given by
\begin{align}
  \label{eq:jsd-def}
   \post{\Xinitv,0:t}{\chunk{y}{0}{t}}(D) \eqdef \frac{1}{\dens[\Xinitv]{\chunk{y}{0}{t}}} \int \Xinitv(\rmd x_0) g(x_0,y_0) \prod_{s=1}^t M(x_{s-1},\rmd x_s) g(x_s,y_s) \1_D(x_{0:t}) \eqsp,
\end{align}
for all $D\in\Xsigma^{\otimes (t+1)}$.
With $T$ being some final time point, the \pg sampler (reviewed in the subsequent section) defines a Markov kernel which is reversible \wrt\ $\post{\Xinitv,0:T}{\chunk{y}{0}{T}}$. Samples drawn from the \pg kernel can thus be used to draw inference
about the states (and/or parameters) of the state space model.


\section{The particle Gibbs sampler}\label{sec:pg}%
Consider first an \smc sampler targeting the sequence of \jsd{s} defined in~\eqref{eq:jsd-def}.
The \smc sampler approximates $\post{\Xinitv,0:t}{\chunk{Y}{0}{t}}$ by a collection of weighted samples
$\{(\epart{0:t}{i}, \ewght{t}{i})\}_{i=1}^N$, in the sense that
\begin{align*}
  \post[hat]{\Xinitv,0:t}{\chunk{Y}{0}{T}}(h) \eqdef \sum_{i=1}^N \frac{ \ewght{t}{i} }{ \sum_{\ell=1}^N \ewght{t}{\ell} } h(\epart{0:t}{i})
\end{align*}
is an estimator of $\post{\Xinitv,0:t}{\chunk{Y}{0}{t}}(h)$ for a measurable function $h: \Xset^{t+1}\to \rset$. These
weighted samples can be generated in several different ways, see \eg\ \citet{DoucetGA:2000,delmoral:2004,DoucetJ:2011,CappeMR:2005}
and the references therein. Here we review a basic method, though it should be noted that the \pg sampler
can be generalized to more advanced procedures, see \citet{AndrieuDH:2010,ChopinS:2013}.

Initially, $\post{\Xinitv,0:0}{Y_0}$ is approximated by importance sampling. That is, we simulate independently $\{\epart{0}{i}\}_{i=1}^N$
from a proposal distribution: $\epart{0}{i} \sim \Xinitis{Y_0}{\cdot}$. The samples, commonly referred to as \emph{particles}, are then
assigned importance weights,
\begin{align}
  \label{eq:particle-weight-0}
  \ewght{0}{i}=   \ewghtfuncfInit{Y_0}(\epart{0}{i}) \eqsp,
\end{align}
where  $\ewghtfuncfInit{Y_0}(x)=g(x,Y_0) \frac{ \rmd \Xinitv }{\rmd \Xinitisf{Y_0}} (x)$, provided that $\Xinitisf{Y_0}$ is such that $\Xinitv \ll \Xinitisf{Y_0}$.

We proceed inductively.
Denote by $\mcff{t}{N}$ the filtration generated by the particles and weights up to the current time instant $t$:
\begin{equation}
\label{eq:particle-filtration}
  \mcff{t}{N} \eqdef \sigma\left( \{(\epart{0:s}{i}, \ewght{s}{i})\}_{i=1}^N, 0 \leq s \leq t \right).
\end{equation}
Assume that we have at hand a weighted sample $\{(\epart{0:t-1}{i}, \ewght{t-1}{i})\}_{i=1}^N$
approximating the \jsd $\post{\Xinitv,0:t-1}{\chunk{Y}{0}{t-1}}$ at time $t-1$. This weighted sample is then propagated sequentially \emph{forward in time}. This is done by sampling,
conditionally independently given the particle history $\mcff{t-1}{N}$, for each particle $i \in \{1,\ldots, N\}$ an \emph{ancestor index} $A_t^i$ with probability
\begin{equation}
  \label{eq:ancestor-t}
  \CPP{A_t^i = j}{\mcff{t-1}{N}} = \frac{ \ewght{t-1}{j} }{ \sum_{\ell=1}^N \ewght{t-1}{\ell}} \eqsp, \quad j \in \{1,\dots,N\} \eqsp,
\end{equation}
 and then by sampling a new particle position from the proposal kernel $\Kisf{Y_t}$:
\begin{align}
  \label{eq:particle-t}
  \epart{t}{i} \sim \Kis{Y_t}{\epart{t-1}{A_t^i}}{ \cdot }\eqsp.
\end{align}
The particle trajectories (\ie, the ancestral paths of the particles $\epart{t}{i}$, $i \in \{1,\dots,N\}$)
are constructed sequentially by associating the current particle $\epart{t}{i}$ with the particle trajectory of its ancestor:
\begin{equation}
\label{eq:ancestral-path}
\epart{0:t}{i} \eqdef (  \epart{0:t-1}{A_t^i}, \epart{t}{i} )\eqsp.
\end{equation}
Finally, similarly to \eqref{eq:particle-weight-0}
the particles are assigned importance weights given by
\begin{align}
  \label{eq:particle-weight-t}
\ewght{t}{i}= \ewghtfunc{Y_t}{\epart{t-1}{A_t^i}}{\epart{t}{i}}
\eqdef \frac{\rmd \Kun{Y_t}{\epart{t-1}{A_t^i}}{\cdot}}{\rmd \Kis{Y_t}{\epart{t-1}{A_t^i}}{\cdot}} (\epart{t}{i})\eqsp,
\end{align}
where $\Kunf{y}$ is defined in \eqref{eq:def-Kun} and, as before, it is assumed that
$ \Kun{y}{x}{\cdot} \ll \Kis{y}{x}{\cdot}$. This results in a weighted particle system
$\{(\epart{0:t}{i}, \ewght{t}{i})\}_{i=1}^N$ targeting $\post{\Xinitv,0:t}{\chunk{Y}{0}{t}}$, completing the induction. Two classical choices for the proposal kernel $\Kisf{y}$ are:
\begin{equation} \label{eq:bootstrap-fully-adapted}
\Kis{y}{x}{\rmd x'}=
\begin{cases}
M(x,\rmd x') & \mbox{bootstrap filter,}\\
\frac{M(x,\rmd x') g(x',y)}{\int M(x,\rmd x') g(x',y)} & \mbox{fully-adapted filter.}
\end{cases}
\end{equation}
Assume now that $T$ is some final time point and that we are interested in simulating
from the \jsd $\post{\Xinitv,0:T}{\chunk{Y}{0}{T}}$ using an \mcmc procedure.
For that purpose, it is required to define a Harris positive recurrent Markov kernel on the path space $(\Xset^{T+1},\Xsigma^{\otimes (T+1)})$ having the \jsd $\post{\Xinitv,0:T}{\chunk{Y}{0}{T}}$ as its unique invariant distribution.
The \pg sampler accomplishes this by making
use of \smc. From an algorithmic point of view, the difference between \pg and a standard \smc sampler is that in the former,
one particle trajectory, denoted as $x_{0:T}^\prime = \prange{x_0^\prime}{x_T^\prime} \in \Xset^{t+1}$, is specified \emph{a priori}.
This trajectory is used as a reference for the \pg sampler, as discussed below.

The reference trajectory is taken into account by simulating only $N-1$ particles in the usual way.
The $N$th particle is then set deterministically according to the reference. At the initialization, we thus simulate
independently $\{\epart{0}{i}\}_{i=1}^{N-1}$ with $\epart{0}{i} \sim \Xinitis{Y_0}{\cdot}$ and set $\epart{0}{N} = x_0^\prime$.
We then compute importance weights for all particles, $i = \range{1}{N}$, according to~\eqref{eq:particle-weight-0}.

Analogously, at any consecutive time point $t$, we sample the first $N-1$ particles
$\{ (A_t^i, \epart{t}{i}) \}_{i=1}^{N-1}$ conditionally independently given $\mcff{t-1}{N}$ according to \eqref{eq:ancestor-t}--\eqref{eq:particle-t}.
Note that these particles will depend on the reference trajectory through the resampling step \eqref{eq:ancestor-t}.
The $N$th particle and its ancestor index are then set deterministically: $\epart{t}{N} = x_t^\prime$ and $A_t^N = N$.
Finally, importance weights are then computed for all the particles according to \eqref{eq:particle-weight-t}.
Note that, by construction, the $N$th particle trajectory will coincide with the reference trajectory for all $t$, $\epart{0:t}{N} = x_{0:t}^\prime$.

After a complete pass of the above procedure, a trajectory $\epart{0:T}{\star}$ is sampled from among the
particle trajectories at time $T$ (see \eqref{eq:ancestral-path}), with probability proportional to the importance
weight $\ewght{T}{i}$, $i \in \{1,\dots, N\}$, \ie\,
\begin{equation}
\label{eq:probability-selection}
\CPP{\epart{0:T}{\star} = \epart{0:T}{i}}{ \mcff{T}{N} } = \frac{\ewght{T}{i}}{\sum_{\ell=1}^N \ewght{T}{\ell}} \eqsp, \quad i \in \{1,\dots,N\} \eqsp.
\end{equation}
This procedure thus associates each trajectory $x_{0:T}^\prime \in \Xset^{T+1}$ to a probability distribution on $(\Xset^{T+1}, \Xsigma^{\otimes(T+1)})$,
defining a Markov kernel on $(\Xset^{T+1}, \Xsigma^{\otimes(T+1)})$. More specifically, this kernel is given by
\begin{align}
  \label{eq:pg-kernel}
  \kernelPG(x_{0:T}^\prime, D) \eqdef \PE{ \frac{\sum_{i=1}^N \ewght{T}{i} \1_D(\epart{0:T}{i})}{\sum_{i=1}^N \ewght{T}{i}}},
\end{align}
for $(x_{0:T}^\prime, D) \in \Xset^{T+1} \times \Xsigma^{\otimes(T+1)}$,
where $\PE{}$ refers to expectation \wrt\ the random variables generated by the \pg algorithm.
We refer to $\kernelPG$ as the \pg kernel.

As shown by \citet{AndrieuDH:2010}, the conditioning on a reference trajectory implies that the \pg kernel leaves the \jsd invariant:
\begin{align*}
  \post{\Xinitv,0:T}{\chunk{Y}{0}{T}}(D) &= \int \kernelPG(x_{0:T}^\prime, D) \post{\Xinitv,0:T}{\chunk{Y}{0}{T}}(\rmd x_{0:T}^\prime)\,, & \forall D &\in \Xsigma^{(T+1)}.
\end{align*}
Quite remarkably, this invariance property holds for any $N\geq 1$.

Empirically, it has been found that the mixing of the \pg kernel can be improved significantly by
updating the ancestor indices $A_t^N$ for $t \in \{1,\dots,T\}$, either as part of the forward recursion \citep{LindstenJS:2012}
or in a separate backward recursion \citep{WhiteleyAD:2010}. We shall not specifically analyze these modified \pg algorithms
in this work, although our uniform ergodicity result apply straightforwardly to these algorithms as well.

\section{Main result}
\label{sec:main-result}
In this section we state the main results. First, in \autoref{sec:minorization}, we give a minorization condition for the \pg kernel.
Following this we discuss how to increase the number of particles $N=N_T$ as a function of the number of observations $T$ in order to
obtain a non-degenerate lower-bound. We consider first
a strong mixing condition and then a much weaker moment assumption in \autoref{sec:strong-mixing}~and~\autoref{sec:moment-assumption}, respectively.

\subsection{Minorization condition}\label{sec:minorization}
Define the sequence of nonnegative random variables $\{B_{t,T}\}_{t=0}^T$ by
\begin{equation}
\label{eq:def-b-T}
B_{t,T} = \sup_{0\leq \ell \leq T-t}
\frac{\supnorm{\ewghtfuncf{Y_t}} \supnorm{\Kunf{\chunk{Y}{t+1}{t+\ell}} \bigone}}{\cdens[\mu]{\chunk{Y}{t}{t+\ell}}{\chunk{Y}{0}{t-1}}}\\
\end{equation}
where, by convention, $\supnorm{\Kunf{\chunk{Y}{t+1}{t}} \bigone} = 1$.
\begin{theorem}
\label{theo:doeblin-condition-PG}
For all $x_{0:T}^\prime \in \Xset^{T+1}$ and $D \in \Xsigma^{\otimes (T+1)}$,
\begin{equation}
\label{eq:doeblin-condition-PG}
\kernelPG(x_{0:T}^\prime, D) \geq \mineps{T}{N} \eqsp \post{\Xinitv,0:T}{\chunk{Y}{0}{T}}(D) \eqsp,
\end{equation}
where
\begin{equation}
\label{eq:def-epsilon}
\mineps{T}{N} =   \prod_{t=0}^T \frac{N-1}{2 B_{t,T}+N-2} \eqsp.
\end{equation}
\end{theorem}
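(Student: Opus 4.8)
The plan is to establish the minorization bound by tracking, step by step through the forward sweep of the conditional SMC algorithm, the probability that the algorithm ``forgets'' the reference trajectory and instead produces a fresh draw distributed according to the \jsd. The key observation is that the \pg kernel $\kernelPG(x_{0:T}^\prime, \cdot)$ is, by construction, a mixture: the random output trajectory $\epart{0:T}{\star}$ either coincides (partly or wholly) with the reference $x_{0:T}^\prime$, or it is composed entirely of ``free'' particles that were simulated without reference to $x_{0:T}^\prime$. If we can lower bound, uniformly in $x_{0:T}^\prime$, the probability that the selected trajectory consists only of free particles, \emph{and} show that conditionally on this event the law of $\epart{0:T}{\star}$ is exactly $\post{\Xinitv,0:T}{\chunk{Y}{0}{T}}$, then \eqref{eq:doeblin-condition-PG} follows with $\mineps{T}{N}$ equal to (a lower bound on) that probability.

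First I would set up the appropriate coupling/decomposition. Introduce, for each time $t$, the event that the ancestor index $A_t^i$ of a free particle points to a free particle (rather than to the $N$th, reference, slot), and more importantly track the ``lineage'' of the finally selected particle. The natural quantity is the probability that, running the selection \eqref{eq:probability-selection} and then tracing back through \eqref{eq:ancestral-path}, the chosen trajectory never touches slot $N$ at any level $t = 0, \dots, T$. I would bound this from below by a telescoping product: at each level $t$, conditionally on $\mcff{t-1}{N}$ and on the event that the lineage is free up to time $t-1$, the conditional probability that it remains free at time $t$ is at least
\[
\frac{\sum_{i=1}^{N-1}\ewght{t}{i}}{\sum_{i=1}^{N}\ewght{t}{i}} \cdot \frac{N-1}{N-1 + (\text{contribution of resampling to slot }N)} ,
\]
and one must control the weight $\ewght{t}{N}$ attached to the reference particle against the free weights. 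This is exactly where $B_{t,T}$ enters: the definition \eqref{eq:def-b-T} is precisely an $\ell^\infty$ bound relating the reference-particle weight (times the look-ahead factor $\supnorm{\Kunf{\chunk{Y}{t+1}{t+\ell}}\bigone}$ needed because the selection at time $T$ involves the whole remaining observation record) to the normalizing density $\cdens[\mu]{\chunk{Y}{t}{t+\ell}}{\chunk{Y}{0}{t-1}}$ against which the free particles' weights average correctly. I would therefore prove an auxiliary lemma: conditionally on $\mcff{t-1}{N}$, the ratio of the ``mass that the backward lineage sends to slot $N$'' to ``the mass it sends to a free slot'' is bounded above by $2B_{t,T}/(N-1)$, giving the per-step survival probability $\geq (N-1)/(2B_{t,T}+N-2)$; multiplying over $t = 0,\dots,T$ yields $\mineps{T}{N}$.

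Next I would verify the conditional-law claim: on the event that the selected trajectory $\epart{0:T}{\star}$ uses only free particles, its conditional distribution is $\post{\Xinitv,0:T}{\chunk{Y}{0}{T}}$. This is a Feynman--Kac / importance-sampling identity: the free particles together with their weights form an ordinary (unconditional) particle system, the selection probability \eqref{eq:probability-selection} restricted to them is the self-normalized importance weight, and the standard unbiasedness/consistency argument for SMC (as recalled around \eqref{eq:jsd-def} and in the references cited in \autoref{sec:pg}) shows the resulting trajectory is drawn exactly from the \jsd --- crucially independently of the reference, hence of $x_{0:T}^\prime$. Combining the mixture decomposition $\kernelPG(x_{0:T}^\prime, D) \geq \PP[\text{lineage free}]\cdot \post{\Xinitv,0:T}{\chunk{Y}{0}{T}}(D)$ with the telescoping lower bound on $\PP[\text{lineage free}]$ gives \eqref{eq:doeblin-condition-PG}.

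**The main obstacle** will be the bookkeeping for the per-step survival bound: one has to be careful that the ``look-ahead'' factors $\supnorm{\Kunf{\chunk{Y}{t+1}{t+\ell}}\bigone}$ appearing in $B_{t,T}$ correctly account for the fact that the reference particle can be selected at time $T$ through \emph{any} of its descendants, so the relevant comparison at time $t$ is not just between instantaneous weights $\ewght{t}{N}$ and $\ewght{t}{i}$ but between the reference particle's total downstream selection potential and that of a free particle --- and the $\sup$ over $\ell$ in \eqref{eq:def-b-T} must be shown to dominate all of these simultaneously. Handling the resampling step correctly (the factor $2$ in the denominator, reflecting that slot $N$ both carries weight $\ewght{t}{N}$ and is the fixed ancestor $A_t^N = N$) is the delicate combinatorial point; the rest is a routine induction on $t$.
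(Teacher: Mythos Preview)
Your overall strategy---decompose the \pg kernel according to whether the selected lineage touches the reference slot, bound the ``lineage-free'' probability by a telescoping product, and identify the conditional law on that event with the \jsd---has a genuine gap at the conditional-law step, and this is not merely a bookkeeping issue.

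\textbf{The gap.} Two claims in your second step are incorrect. First, the $N-1$ free particles do \emph{not} form an ordinary unconditional \smc system. At each time $t$ the ancestor $A_t^i$ of a free particle is drawn with probability $\ewght{t-1}{j}/\sum_{\ell=1}^{N}\ewght{t-1}^{\ell}$, and the normalizing sum in the denominator includes the reference weight $\ewght{t-1}{N}$. Hence the law of the free particles depends on $x_{0:T}^\prime$ even before any conditioning, and conditioning on ``no ancestor equals $N$'' does not remove this dependence (the conditioning event itself depends on $\ewght{t-1}{N}$). Second, even for a genuine unconditional \smc sampler, selecting one trajectory with probability proportional to its final weight does \emph{not} produce an exact draw from the \jsd: the self-normalized estimator $\sum_i \ewght{T}{i}\1_D(\epart{0:T}{i})/\sum_i \ewght{T}{i}$ is biased for $\post{\Xinitv,0:T}{\chunk{Y}{0}{T}}(D)$. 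There is no ``standard unbiasedness argument'' that delivers the identity you need. Consequently the inequality $\kernelPG(x_{0:T}^\prime,D)\geq \PP[\text{lineage free}]\,\post{\Xinitv,0:T}{\chunk{Y}{0}{T}}(D)$ is not available by the route you sketch.

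\textbf{What the paper does instead.} The paper never conditions on an event. It works directly with the expectation of the ratio in \eqref{eq:pg-kernel}: discard the reference term from the numerator, bound the first and $N$th terms in the denominator by $\supnorm{\ewghtfuncf{Y_T}}$ so that numerator and denominator become conditionally independent given $\mcff{T-1}{N}$, and apply Jensen's inequality to pull the conditional expectation inside. This yields a lower bound of the same ratio form at time $T-1$, but with the function $\bigone$ in the denominator replaced by a new function $h_{T-1}$. Iterating produces a backward recursion for auxiliary functions $h_t$ and scalars $\beta_t$; the \jsd appears naturally because the numerator recursion is exactly the unnormalized smoothing integral, while the denominator is controlled by the $B_{t,T}$ through an explicit solution of the $h_t$-recursion. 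The factor $2$ you anticipated arises from bounding both $\ewght{t}{1}$ and $\ewght{t}{N}$ in the denominator, not from any ancestor-index combinatorics. If you want to pursue a genuinely event-based argument, the relevant construction is the ``doubly conditional \smc'' of \citet{AndrieuLV:2013}, which is considerably more delicate than what you outlined.
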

\begin{proof}
The proof is postponed to \autoref{sec:proof}.
However, to provide some intuition for the result, 
the main ideas of the proof are outlined below.

Using the representation of the \pg kernel from \eqref{eq:pg-kernel} we can write
\begin{align*}
  \kernelPG(x_{0:T}^\prime, D) \geq (N-1) \PE{\frac{ \ewght{T}{1} \1_D(\epart{0:T}{1})}{\sum_{i=1}^N \ewght{T}{i}}}
  \geq (N-1) \PE{\CPE{ \frac{ \ewght{T}{1} \1_D(\epart{0:T}{1})}{2\supnorm{\ewghtfuncf{Y_t}} + \sum_{i=2}^{N-1} \ewght{T}{i}} }{ \mcff{T-1}{N} }}
\eqsp,
\end{align*}
where, for the first inequality, we have simply discarded the $N$th term (corresponding to the reference particle) and used the fact that the $N-1$ weighted particles
$\{(\epart{0:T}{i}, \ewght{T}{i})\}_{i=1}^{N-1}$ are equally distributed. For the second inequality, we bound the first and the last term of the
sum in the denominator by $\supnorm{\ewghtfuncf{Y_t}}$. This has the effect that the random variables entering the numerator and the denominator of the expression
are conditionally independent given $\mcff{T-1}{N}$. By convexity of $x \mapsto 1/x$ and using Jensen's inequality we therefore obtain the bound
\begin{align*}
  \kernelPG(x_{0:T}^\prime, D) \geq  (N-1)  \PE{\frac{ \CPE{ \ewght{T}{1} \1_D(\epart{0:T}{1}) }{ \mcff{T-1}{N}} }
    { 2\supnorm{\ewghtfuncf{Y_t}}  + (N-2) \CPE{ \ewght{T}{2} }{\mcff{T-1}{N}} }}\eqsp.
\end{align*}
The inner conditional expectations can be computed explicitly. Principally, the result follows by repeating this procedure for time $T-1$, then for $T-2$, etc.
\end{proof}

\begin{corollary}\label{cor:finite-T}
  Assume that $g(x,y)>0$ for all $(x,y) \in \Xset \times \Yset$ and $\supnorm{\ewghtfuncf{y}}<\infty$ for all $y \in \Yset$. Then,
  for fixed $T$,
  $$
  \mineps{T}{N} \geq 1 + \frac{1}{N-1} \sum_{t=0}^{T} (1-2 B_{t,T}) + O_{\PP}(N^{-2}) \eqsp,
  $$
  and $\lim_{N\to \infty} \mineps{T}{N} = 1 $.
\end{corollary}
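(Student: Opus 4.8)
The plan is to work directly from the closed form $\mineps{T}{N}=\prod_{t=0}^{T}\frac{N-1}{2B_{t,T}+N-2}$ supplied by Theorem~\ref{theo:doeblin-condition-PG}. Since $T$ is fixed, this is a product of finitely many factors, each of which can be expanded to first order in $1/(N-1)$ as soon as we know that $B_{t,T}$ is finite. Accordingly the argument has three steps: (i) show $B_{t,T}<\infty$ under the two stated assumptions; (ii) Taylor-expand each factor; (iii) multiply out the $T+1$ factors and read off both the asymptotic lower bound and the limit.

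The only step requiring real care is (i), and I would argue that $B_{t,T}<\infty$ for \emph{every} realization of the observations. Because the supremum in \eqref{eq:def-b-T} runs over the finite set $\ell\in\{0,\dots,T-t\}$, it suffices to control each term. In the numerator, $\supnorm{\ewghtfuncf{Y_t}}<\infty$ by hypothesis; for $\supnorm{\Kunf{\chunk{Y}{t+1}{t+\ell}}\bigone}$, note that the proposal kernel $\Kisf{y}$ is a Markov kernel and, by \eqref{eq:particle-weight-t}, $\ewghtfuncf{y}$ is the density of $\Kunf{y}$ with respect to $\Kisf{y}$, so that for any nonnegative bounded $h$ one has $\supnorm{\Kunf{y}h}=\sup_x\int\Kis{y}{x}{\rmd x'}\ewghtfuncf{y}(x,x')h(x')\le\supnorm{h}\,\supnorm{\ewghtfuncf{y}}$; iterating this over the steps $t+1,\dots,t+\ell$ gives $\supnorm{\Kunf{\chunk{Y}{t+1}{t+\ell}}\bigone}\le\prod_{s=t+1}^{t+\ell}\supnorm{\ewghtfuncf{Y_s}}<\infty$. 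In the denominator, $g>0$ everywhere forces $\dens[\mu]{\chunk{y}{0}{s}}>0$ for every $\chunk{y}{0}{s}$ — it is the integral of a strictly positive integrand against a probability measure — whence $\cdens[\mu]{\chunk{Y}{t}{t+\ell}}{\chunk{Y}{0}{t-1}}=\dens[\mu]{\chunk{Y}{0}{t+\ell}}/\dens[\mu]{\chunk{Y}{0}{t-1}}>0$. Thus $B_{t,T}<\infty$, though it is not bounded by a deterministic constant (the denominator can be arbitrarily small), which is precisely why the remainder below is $O_{\PP}$ rather than $O$.

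For steps (ii)--(iii), write $u=N-1$ and expand $\frac{N-1}{2B_{t,T}+N-2}=\bigl(1+(2B_{t,T}-1)/u\bigr)^{-1}=1-(2B_{t,T}-1)/u+\eta_{t,N}$ with $|\eta_{t,N}|\le 2(2B_{t,T}-1)^2/u^2$ once $u$ exceeds a ($B_{t,T}$-dependent) threshold, by the geometric series. Multiplying the $T+1$ factors $1+O_{\PP}(1/u)$ and collecting every product of two or more perturbations into the remainder gives
$$
\mineps{T}{N}=1+\frac{1}{N-1}\sum_{t=0}^{T}(1-2B_{t,T})+O_{\PP}(N^{-2})\eqsp,
$$
the $O_{\PP}$ constant being a finite function of $\max_{0\le t\le T}B_{t,T}$; in particular the claimed lower bound holds. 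The limit is then immediate: each factor tends to $1$ as $N\to\infty$ (again using $B_{t,T}<\infty$) and a finite product of sequences converging to $1$ converges to $1$, so $\lim_{N\to\infty}\mineps{T}{N}=1$. I expect the only genuine obstacle to be establishing finiteness of $B_{t,T}$ in step (i) — identifying that positivity of $g$ handles the denominator and the Markov-kernel property of $\Kisf{y}$ together with $\supnorm{\ewghtfuncf{\cdot}}<\infty$ handles the numerator — whereas (ii) and (iii) are a routine expansion.
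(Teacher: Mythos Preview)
Your proposal is correct and follows essentially the same approach as the paper: establish $B_{t,T}<\infty$ from the two hypotheses (the paper uses the identical arguments for numerator and denominator, including the iterated bound $\supnorm{\Kunf{\chunk{Y}{t+1}{t+\ell}}\bigone}\le\prod_{s=t+1}^{t+\ell}\supnorm{\ewghtfuncf{Y_s}}$), then perform a first-order expansion of the product formula \eqref{eq:def-epsilon}. The only cosmetic difference is that the paper passes through the logarithm---writing $\mineps{T}{N}=\exp\bigl\{-\sum_t\log(1+(2B_{t,T}-1)/(N-1))\bigr\}$ and applying $\log(1+x)\le x$ to obtain $\mineps{T}{N}\ge\exp\bigl\{(N-1)^{-1}\sum_t(1-2B_{t,T})\bigr\}$---whereas you expand each factor via the geometric series directly; the two routes are equivalent.
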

\begin{proof}
  From the definition \eqref{eq:def-epsilon} we have
  \begin{align*}
    \mineps{T}{N} = 
    \exp\left\{ - \sum_{t=0}^T \log\left( 1 + \frac{2 B_{t,T}-1}{N-1} \right) \right\}
    \geq \exp\left\{ \frac{1}{N-1} \sum_{t=0}^T \left( 1-2 B_{t,T} \right) \right\}\eqsp.
  \end{align*}
  For a fixed $T$,
  we thus obtain the result
  provided that $B_{t,T}<\infty$ for all $t \in \{0,\ldots,T\}$.
  However, the positivity of $g$ implies that $\cdens[\Xinitv]{\chunk{Y}{t}{t+\ell}}{\chunk{Y}{0}{t-1}}>0$ for all $\ell \geq 0$, and since
  $\supnorm{\ewghtfuncf{y}}<\infty$ for all $y \in \Yset$, it can be easily checked that
  $$
  \supnorm{\Kunf{\chunk{Y}{t+1}{t+\ell}} \bigone} \leq \prod_{s=t+1}^{t+\ell}  \supnorm{\ewghtfuncf{Y_s}}<\infty\eqsp,
  $$
  which immediately implies that $B_{t,T}<\infty$ for all $t \in \{0,\ldots,T\}$.
\end{proof}

\begin{remark}\label{rem:comparison-alv}
  The minorization condition of \autoref{theo:doeblin-condition-PG} is similar to Proposition~6 by \citet{AndrieuLV:2013}.
  However, they express the minorizing constant in terms of the expectation of a likelihood estimator \wrt\ the law of
  a ``doubly conditional SMC'' algorithm. They do not pursue an analysis of the effect on the minorization condition
  by the forgetting of the initial condition of the state space model. To obtain an explicit rate of convergence they
  assume, in our notation, that the triangular array of random variables $\{ B_{t,T} \}_{0\leq t \leq T}$ is uniformly
  bounded for $T \geq 0$. This is the case, basically, only when the model satisfies strong mixing conditions, as we discuss in
  the subsequent section. Indeed, \citet[Proposition~14~and~Lemma~17]{AndrieuLV:2013} is the same as our \autoref{prop:strong-mixing-2}.
\end{remark}


\subsection{Strong mixing condition}\label{sec:strong-mixing}
We first assume a strong mixing condition for the kernel $M$:
\begin{hyp}{S}
\item \label{assum:strong-mixing} There exist positive constants $(\sigma_-,\sigma_+)$, a nonnegative measure $\gamma$ and an integer $m \in \nset$ such that for all $x \in \Xset$,
$$
\sigma_- \gamma(\rmd x') \leq M^m(x,\rmd x') \leq \sigma_+ \gamma(\rmd x')\eqsp.
$$
\end{hyp}
This condition has been introduced by \citet{DelMoralG:1999} to establish the uniform-in-time convergence of the particle filter. This condition, which is stronger than the Doeblin condition, typically requires that the state space is compact. It is overly restrictive but is often use in the analysis of state space models because it implies a form of uniform forgetting of the initial condition of the filter, which is key to obtaining long-term stability of the particle filter.
\begin{proposition}
\label{prop:strong-mixing}
Assume that \ref{assum:strong-mixing} holds with $m=1$ and that the proposal kernel is fully-adapted as defined in \eqref{eq:bootstrap-fully-adapted}. Then, taking $N_T \sim \lambda T$ for some $\lambda>0$, we have
$$
\liminf_{T \to \infty} \mineps{T}{N_T} \geq \exp\left(\frac{1-2 (\sigma_+/\sigma_-)^2}{\lambda}\right)>0\,, \quad \as
$$
\end{proposition}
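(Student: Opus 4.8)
The plan is to start from the elementary inequality already established inside the proof of \autoref{cor:finite-T},
\begin{equation*}
\mineps{T}{N} \;\geq\; \exp\left\{ \frac{1}{N-1}\sum_{t=0}^{T}\bigl(1-2B_{t,T}\bigr) \right\},
\end{equation*}
and to combine it with a \emph{time-uniform} bound on the array $\{B_{t,T}\}$. Concretely, I would show that under \ref{assum:strong-mixing} with $m=1$ and the fully-adapted proposal one has $B_{t,T}\leq(\sigma_+/\sigma_-)^2$ for every $1\leq t\leq T$, and that the single boundary term is controlled by a random constant $C_0<\infty$ that does not depend on $T$. Granting this, $\sum_{t=0}^{T}(1-2B_{t,T})\geq(1-2C_0)+T\bigl(1-2(\sigma_+/\sigma_-)^2\bigr)$, so with $N_T\sim\lambda T$ the exponent tends a.s.\ to $(1-2(\sigma_+/\sigma_-)^2)/\lambda$ as $T\to\infty$; since $\exp$ is continuous and nondecreasing this gives $\liminf_{T\to\infty}\mineps{T}{N_T}\geq\exp\left(\frac{1-2(\sigma_+/\sigma_-)^2}{\lambda}\right)$, and the strict positivity is automatic.

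The heart of the argument is the bound $B_{t,T}\leq(\sigma_+/\sigma_-)^2$ for $t\geq1$, which rests on three points. First, for the fully-adapted proposal \eqref{eq:bootstrap-fully-adapted} the importance weight \eqref{eq:particle-weight-t} collapses to $\ewghtfunc{y}{x}{x'}=\int M(x,\rmd x')g(x',y)=\Kunf{y}\bigone(x)$, which is independent of $x'$, so $\supnorm{\ewghtfuncf{Y_t}}=\supnorm{\Kunf{Y_t}\bigone}\leq\sigma_+\,\gamma\bigl(g(\cdot,Y_t)\bigr)$ by the upper half of \ref{assum:strong-mixing}. Second, writing $h_{t,\ell}\eqdef\Kunf{\chunk{Y}{t+1}{t+\ell}}\bigone$ (so $h_{t,0}\equiv\bigone$), for $\ell\geq1$ one application of \ref{assum:strong-mixing} to $h_{t,\ell}=\Kunf{Y_{t+1}}h_{t+1,\ell-1}$ gives, uniformly in $x$,
\begin{equation*}
\sigma_-\,\gamma\bigl(g(\cdot,Y_{t+1})\,h_{t+1,\ell-1}\bigr)\;\leq\;h_{t,\ell}(x)\;\leq\;\sigma_+\,\gamma\bigl(g(\cdot,Y_{t+1})\,h_{t+1,\ell-1}\bigr),
\end{equation*}
hence $\supnorm{h_{t,\ell}}\leq(\sigma_+/\sigma_-)\inf_x h_{t,\ell}(x)$ --- the dependence on the starting point is damped after a single mixing step. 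Third, a direct manipulation of the definition of the conditional density yields, for $t\geq1$, $\cdens[\mu]{\chunk{Y}{t}{t+\ell}}{\chunk{Y}{0}{t-1}}=\phi_{t-1}\bigl(\Kunf{\chunk{Y}{t}{t+\ell}}\bigone\bigr)$, where $\phi_{t-1}$ is the (normalized) filtering distribution of $X_{t-1}$ given $Y_{0:t-1}$ with prior $\mu$; since $\Kunf{\chunk{Y}{t}{t+\ell}}\bigone(x)=\int M(x,\rmd x')g(x',Y_t)h_{t,\ell}(x')\geq\sigma_-(\inf h_{t,\ell})\gamma(g(\cdot,Y_t))\geq(\sigma_-^2/\sigma_+)\,\supnorm{h_{t,\ell}}\,\gamma(g(\cdot,Y_t))$ for $\ell\geq1$ (and $\geq\sigma_-\gamma(g(\cdot,Y_t))$ for $\ell=0$), the same lower bound is inherited by $\phi_{t-1}(\cdot)$. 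Combining the three estimates, the factors $\gamma(g(\cdot,Y_t))$ and $\supnorm{h_{t,\ell}}$ cancel and
\begin{equation*}
\frac{\supnorm{\ewghtfuncf{Y_t}}\,\supnorm{\Kunf{\chunk{Y}{t+1}{t+\ell}}\bigone}}{\cdens[\mu]{\chunk{Y}{t}{t+\ell}}{\chunk{Y}{0}{t-1}}}\;\leq\;\frac{\sigma_+\,\gamma(g(\cdot,Y_t))\,\supnorm{h_{t,\ell}}}{(\sigma_-^2/\sigma_+)\,\supnorm{h_{t,\ell}}\,\gamma(g(\cdot,Y_t))}\;=\;\Bigl(\frac{\sigma_+}{\sigma_-}\Bigr)^{2}
\end{equation*}
for all $\ell\geq1$ (the case $\ell=0$ even gives $\sigma_+/\sigma_-$); taking the supremum over $0\leq\ell\leq T-t$ gives $B_{t,T}\leq(\sigma_+/\sigma_-)^2$. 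For $t=0$ the same flatness estimate gives $\dens[\mu]{\chunk{Y}{0}{\ell}}=\mu\bigl(g(\cdot,Y_0)\,\Kunf{\chunk{Y}{1}{\ell}}\bigone\bigr)\geq(\sigma_-/\sigma_+)\,\mu(g(\cdot,Y_0))\,\supnorm{\Kunf{\chunk{Y}{1}{\ell}}\bigone}$ for $\ell\geq1$, whence $B_{0,T}\leq C_0\eqdef(\sigma_+/\sigma_-)\,\supnorm{\ewghtfuncfInit{Y_0}}/\mu(g(\cdot,Y_0))$, which is a.s.\ finite (note $\mu(g(\cdot,Y_0))=\dens[\mu]{Y_0}>0$ a.s.) and independent of $T$.

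The main obstacle is precisely obtaining this cancellation. Bounding numerator and denominator separately by the crude products $\sigma_+^{\ell+1}\prod_{s=t}^{t+\ell}\gamma(g(\cdot,Y_s))$ and $\sigma_-^{\ell+1}\prod_{s=t}^{t+\ell}\gamma(g(\cdot,Y_s))$ only produces the ratio $(\sigma_+/\sigma_-)^{\ell+1}$, which is useless since $\ell$ ranges up to $T-t$ and would make $\mineps{T}{N_T}$ degenerate. One must instead exploit that $h_{t,\ell}$ is flat up to a factor $\sigma_+/\sigma_-$ after a single mixing step --- so that in $\supnorm{h_{t,\ell}}/\inf_x h_{t,\ell}(x)$ only one such factor survives --- and that the remaining $\ell$-dependent part of the numerator (namely $\supnorm{h_{t,\ell}}$) is literally the same object that drives the lower bound on the denominator, so that it cancels exactly and $\ell$ disappears. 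The other ingredients --- the a.s.\ finiteness and positivity of $\gamma(g(\cdot,Y_t))$ and $\supnorm{h_{t,\ell}}$ (forced by well-posedness of the fully-adapted filter), the identity $\cdens[\mu]{\chunk{Y}{t}{t+\ell}}{\chunk{Y}{0}{t-1}}=\phi_{t-1}(\Kunf{\chunk{Y}{t}{t+\ell}}\bigone)$, the $t=0$ boundary term, and the passage to the limit with $N_T\sim\lambda T$ --- are routine.
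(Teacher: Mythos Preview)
Your proposal is correct and follows essentially the same route as the paper. Both arguments establish the uniform bound $B_{t,T}\leq(\sigma_+/\sigma_-)^2$ by forcing a cancellation between numerator and denominator: the paper does this by sandwiching both $\supnorm{\Kunf{\chunk{Y}{t+1}{t+\ell}}\bigone}$ and $\cdens[\mu]{\chunk{Y}{t}{t+\ell}}{\chunk{Y}{0}{t-1}}$ against the common integral $\int\gamma(\rmd x_{t+1})g(x_{t+1},Y_{t+1})\Kunf{\chunk{Y}{t+2}{t+\ell}}\bigone(x_{t+1})$, whereas you achieve the identical cancellation via the flatness estimate $\supnorm{h_{t,\ell}}\leq(\sigma_+/\sigma_-)\inf_x h_{t,\ell}(x)$. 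These are two phrasings of the same idea. Your separate handling of $t=0$ with a random but $T$-free constant $C_0$ is actually a bit more careful than the paper, which tacitly relies on the fully-adapted \emph{initial} proposal $r_0\langle y_0\rangle\propto\mu(\rmd x_0)g(x_0,y_0)$ to make the bound $(\sigma_+/\sigma_-)^2$ hold at $t=0$ as well; either way the boundary term is harmless in the limit.
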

\begin{proof}
First, note that for all $\ell \geq 1$,
\begin{equation}
\label{eq:majo:unif:mix}
\supnorm{\Kunf{\chunk{Y}{t+1}{t+\ell}} \bigone} \leq \sigma_+ \int \gamma(\rmd x_{t+1}) g(x_{t+1},Y_{t+1}) \Kunf{\chunk{Y}{t+2}{t+\ell}} \bigone (x_{t+1})
\end{equation}
and
\begin{align}
  \label{eq:mino:unif:mix}
  \cdens[\Xinitv]{\chunk{Y}{t}{t+\ell}}{\chunk{Y}{0}{t-1}}\geq \sigma_-^2 \int \gamma(\rmd x_{t})g(x_{t},Y_{t})
  \int \gamma(\rmd x_{t+1}) g(x_{t+1},Y_{t+1}) \Kunf{\chunk{Y}{t+2}{t+\ell}} \bigone (x_{t+1})\eqsp.
\end{align}
Now, in the fully-adapted case, we have:
$$
\Kis{y}{x}{\rmd x'}=\frac{M(x,\rmd x')g(x',y)}{\int M(x,\rmd u)g(u,y)}\eqsp,
$$
so that by the definition of $\ewghtfuncf{y}$,
$$
\supnorm{\ewghtfuncf{Y_t}}=\sup_{x \in \Xset} \left|\int M(x,\rmd x_t)g(x_,Y_t)\right| \leq \sigma_+ \int \gamma(\rmd x_t) g(x_t,Y_t)\eqsp.
$$
Combining this equality with \eqref{eq:majo:unif:mix} and \eqref{eq:mino:unif:mix} yields:
$$
B_{t,T} = \sup_{0\leq \ell \leq T-t}
\frac{\supnorm{\ewghtfuncf{Y_t}} \supnorm{\Kunf{\chunk{Y}{t+1}{t+\ell}} \bigone}}{\cdens[\Xinitv]{\chunk{Y}{t}{t+\ell}}{\chunk{Y}{0}{t-1}}} \leq \left(\frac{\sigma_+}{\sigma_-}\right)^2 \eqsp.
$$
By the definition \eqref{eq:def-epsilon}, we then obtain:
$$
\mineps{T}{N} \geq  \prod_{t=0}^T \frac{N-1}{2 B_{t,T}+N-2} \geq \left( \frac{N-1}{N-2+2 (\sigma_+/\sigma_-)^2} \right)^{T+1} \eqsp.
$$
Finally, letting $N_T \sim \lambda T$, we obtain
$$
\liminf_{T \to \infty} \mineps{T}{N_T} \geq \exp\left(\frac{1-2 (\sigma_+/\sigma_-)^2}{\lambda}\right)>0\,, \quad \as
$$
\end{proof}
It is worthwhile to stress that \autoref{prop:strong-mixing} holds whatever the distribution of the observation process, $\sequence{Y}[t][\nset]$ is. This is a consequence of the strong mixing condition \ref{assum:strong-mixing} which provides a simple result, but at the expense of an assumption which is rarely met in practice. If instead of the fully adapted case, we consider the bootstrap filter (see \eqref{eq:bootstrap-fully-adapted}), we may also obtain a uniform-in-time bound. However, this requires an even stronger assumption of the existence of a lower and an upper bound for the observation likelihood.
\begin{hyp}{S}
\item \label{assum:strong-mixing-likelihood} There exists a positive constant $\delta$, such that for  all $y \in \Yset$,
$$
1 \leq \frac{\sup_{x \in \Xset} g(x,y)}{\inf_{x \in \Xset} g(x,y)} \leq \delta \eqsp.
$$
\end{hyp}

\begin{proposition}
\label{prop:strong-mixing-2}
Assume that \ref{assum:strong-mixing}-\ref{assum:strong-mixing-likelihood} hold and that the bootstrap proposal is used: $\Kis{y}{x}{\cdot}= M(x,\cdot)$.
Then, taking $N_T \sim \lambda T$ for some $\lambda>0$, we have
$$
\liminf_{T \to \infty} \mineps{T}{N_T} \geq \exp\left(\frac{1-2 \delta^m \sigma_+/\sigma_- }{\lambda}\right)>0\,, \quad \as \eqsp,
$$
where $m$ is defined in \ref{assum:strong-mixing}.
\end{proposition}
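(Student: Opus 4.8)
The plan is to reproduce, almost verbatim, the proof of \autoref{prop:strong-mixing}; the only piece that needs a genuinely new argument is a uniform bound on the triangular array $\{B_{t,T}\}_{0\le t\le T}$ of \eqref{eq:def-b-T}. I claim that, under \ref{assum:strong-mixing}-\ref{assum:strong-mixing-likelihood} and with the bootstrap proposal,
\[
B_{t,T}\ \le\ \delta^{m}\,\frac{\sigma_+}{\sigma_-}\qquad\text{for every }0\le t\le T,
\]
a bound that is \emph{deterministic} (it does not depend on the realization of the observations). Granting this, \eqref{eq:def-epsilon} gives $\mineps{T}{N}\ge\bigl(\tfrac{N-1}{\,N-2+2\delta^{m}\sigma_+/\sigma_-\,}\bigr)^{T+1}$, and the passage to the limit with $N_T\sim\lambda T$ is then the same computation that ends the proof of \autoref{prop:strong-mixing}, with $(\sigma_+/\sigma_-)^{2}$ replaced by $\delta^{m}\sigma_+/\sigma_-$; it yields $\liminf_{T\to\infty}\mineps{T}{N_T}\ge\exp\bigl((1-2\delta^{m}\sigma_+/\sigma_-)/\lambda\bigr)>0$ (trivially \as, the bound above being deterministic).

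So the entire work is the bound on $B_{t,T}$. Two preliminary remarks: with the bootstrap proposal the importance weight is $\ewghtfunc{y}{x}{x'}=g(x',y)$, hence $\supnorm{\ewghtfuncf{Y_t}}=\sup_{x}g(x,Y_t)$; and $\cdens[\mu]{\chunk{Y}{t}{t+\ell}}{\chunk{Y}{0}{t-1}}=\dens[\mu]{\chunk{Y}{0}{t+\ell}}/\dens[\mu]{\chunk{Y}{0}{t-1}}$ is a weighted average of $x\mapsto\Kunf{\chunk{Y}{t}{t+\ell}}\bigone(x)$, so it is $\ge\inf_{x}\Kunf{\chunk{Y}{t}{t+\ell}}\bigone(x)$ (at $t=0$ one has directly $\cdens[\mu]{\chunk{Y}{0}{\ell}}{\chunk{Y}{0}{-1}}=\mu\bigl(g(\cdot,Y_0)\Kunf{\chunk{Y}{1}{\ell}}\bigone\bigr)$, handled in the same way). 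Fix a lag $\ell$ with $m\le\ell\le T-t$, set $h\eqdef\Kunf{\chunk{Y}{t+m+1}{t+\ell}}\bigone$ (with $h=\bigone$ if $\ell=m$), and expand the numerator product $\Kunf{\chunk{Y}{t+1}{t+\ell}}\bigone$ over its first $m$ kernels and the denominator product $\Kunf{\chunk{Y}{t}{t+\ell}}\bigone$ over its first $m+1$ kernels, so that both end with the same block function $g(\cdot,Y_{t+m})\,h$. For the numerator of the $\ell$-th term, bound the likelihood factors $g(\cdot,Y_s)$ for $s=t+1,\dots,t+m-1$ above by $\sup_{x}g(x,Y_s)$ and pull them out, collapse the $m$ remaining copies of $M$ into $M^{m}$, and use $M^{m}\le\sigma_+\gamma$ \emph{keeping $g(\cdot,Y_{t+m})\,h$ inside $\gamma$}; together with the weight,
\[
\supnorm{\ewghtfuncf{Y_t}}\,\supnorm{\Kunf{\chunk{Y}{t+1}{t+\ell}}\bigone}\ \le\ \Bigl(\prod_{s=t}^{t+m-1}\sup_{x}g(x,Y_s)\Bigr)\sigma_+\,\gamma\bigl(g(\cdot,Y_{t+m})\,h\bigr).
\]
For the denominator, bound $g(\cdot,Y_s)$ for $s=t,\dots,t+m-1$ below by $\inf_{x}g(x,Y_s)$, pull them out, and use $M^{m}\ge\sigma_-\gamma$ (the one extra copy of $M$ costs nothing, since $M$ fixes the resulting constant):
\[
\cdens[\mu]{\chunk{Y}{t}{t+\ell}}{\chunk{Y}{0}{t-1}}\ \ge\ \Bigl(\prod_{s=t}^{t+m-1}\inf_{x}g(x,Y_s)\Bigr)\sigma_-\,\gamma\bigl(g(\cdot,Y_{t+m})\,h\bigr).
\]
Dividing, $\gamma(g(\cdot,Y_{t+m})h)$ cancels and \ref{assum:strong-mixing-likelihood} bounds each $\sup_{x}g(x,Y_s)/\inf_{x}g(x,Y_s)$ by $\delta$, so the $\ell$-th term of $B_{t,T}$ is $\le\delta^{m}\sigma_+/\sigma_-$. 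For the short lags $0\le\ell<m$, using merely $M\bigone=\bigone$ one gets $\supnorm{\Kunf{\chunk{Y}{t+1}{t+\ell}}\bigone}\le\prod_{s=t+1}^{t+\ell}\sup_{x}g(x,Y_s)$ and $\cdens[\mu]{\chunk{Y}{t}{t+\ell}}{\chunk{Y}{0}{t-1}}\ge\prod_{s=t}^{t+\ell}\inf_{x}g(x,Y_s)$, so that term is $\le\delta^{\ell+1}\le\delta^{m}\le\delta^{m}\sigma_+/\sigma_-$ (since $\sigma_-\le\sigma_+$, $M^{m}$ being Markovian). Taking the supremum over $0\le\ell\le T-t$ yields the bound on $B_{t,T}$.

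The step I expect to be the crux is obtaining the exponent $\delta^{m}$ rather than $\delta^{m+1}$: a naive estimate that also extracts $g(\cdot,Y_{t+m})$ from $\gamma$ — bounding it above by $\sup_{x}g(x,Y_{t+m})$ on the numerator side and below by $\inf_{x}g(x,Y_{t+m})$ on the denominator side — picks up a spurious extra factor $\le\delta$; the point is to arrange the two estimates so that the \emph{same} quantity $\gamma(g(\cdot,Y_{t+m})h)$ appears on both and cancels. The second delicate point is the bookkeeping this forces: since \ref{assum:strong-mixing} controls only the $m$-step kernel $M^{m}$, the estimate must be organized in blocks of $m$ consecutive observations, and one has to track which indices ($t,\dots,t+m-1$) are pulled out as scalars versus which single one ($t+m$) is absorbed into $\gamma$, together with the small asymmetry at $t=0$ where there is no leading transition.
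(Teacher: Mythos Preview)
Your proof is correct and follows essentially the same approach as the paper: bound the numerator of each term in $B_{t,T}$ by pulling out the first $m-1$ likelihood factors and using $M^m\le\sigma_+\gamma$, bound the denominator by pulling out $m$ likelihood factors and using $M^m\ge\sigma_-\gamma$, and cancel the common integral $\gamma(g(\cdot,Y_{t+m})h)$. Your treatment is in fact slightly more careful than the paper's, which only writes the bound for $\ell\ge m$ and leaves the short-lag case $0\le\ell<m$ (needed when $t$ is within $m$ of $T$) implicit; your explicit bound $\delta^{\ell+1}\le\delta^m\le\delta^m\sigma_+/\sigma_-$ for those terms fills that gap.
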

\begin{proof}
For the bootstrap filter, $\ewghtfunc{y}{x}{x'}= g(x',y)$. Therefore, $\supnorm{\ewghtfuncf{y}}= \sup_{x \in \Xset} g(x,y)$. On the other hand, for $\ell \geq m$,
\begin{align}
\label{eq:majo:unif:mix-1}
\supnorm{\Kunf{\chunk{Y}{t+1}{t+\ell}} \bigone} \leq \sigma_+ \left( \prod_{s=t+1}^{t+m-1} \sup_{x \in \Xset} g(x,Y_s) \right)
\int \gamma(\rmd x_{t+m}) g(x_{t+m},Y_{t+m}) \Kunf{\chunk{Y}{t+m+1}{l}} \bigone(x_{t+m}) \eqsp,
\end{align}
and
\begin{align}
\label{eq:mino:unif:mix-1}
\cdens[\Xinitv]{\chunk{Y}{t}{t+\ell}}{\chunk{Y}{0}{t-1}}\geq  \sigma_-  \left( \prod_{s=t}^{t+m-1} \inf_{x \in \Xset} g(x,Y_s) \right)
 \int \gamma(\rmd x_{t+m}) g(x_{t+m},Y_{t+m}) \Kunf{\chunk{Y}{t+m+1}{t+\ell}} \bigone(x_{t+m}) \eqsp.
\end{align}
Combining \eqref{eq:majo:unif:mix-1} and \eqref{eq:mino:unif:mix-1} yields
\begin{equation}
B_{t,T} \leq \delta^m \frac{\sigma_+}{\sigma_-} \eqsp.
\end{equation}
The result follows as in the proof of \autoref{prop:strong-mixing}.
\end{proof}

\subsection{Moment assumption}\label{sec:moment-assumption}
Under the strong mixing condition \ref{assum:strong-mixing} and the even more restrictive \ref{assum:strong-mixing-likelihood},
we obtained non-degenerate uniform convergence bounds when the number of trajectories $N_T$ depends linearly on the number of observations $T$.
However, these conditions are very restrictive and hardly ever satisfied when the state space is non-compact.
We now turn to the analysis of the minorization condition under a much weaker moment assumption.
However, when the strong mixing assumption is relaxed, we are no longer able to obtain bounds that hold uniformly \wrt\ the observation sequence.
Instead, we will take a probabilistic approach. In \autoref{prop:N-depend-on-T} below, we
show that the minorizing constant will be bounded away from zero, in probability,
provided that $N_T$ is a power of $T$.
Moreover, the result presented in this section is not restricted to the fully-adapted or the bootstrap \pg kernel and may be obtained for virtually any proposal kernel. 

This result holds \wrt\ the law of the observation process $\sequence{Y}[t][\nset]$. It is therefore of interest
to carry out the analysis for a parametric family of state space models $\{ (M^\param, G^\param), \param \in \Param\}$,
where $\Param$ is a compact subset of a Euclidean space.
Informally, this allows us to analyse the ergodicity of the \pg kernel, even when the algorithm is executed using a misspecified model.
We consider a sequence of parameters $\sequence{\param}[T][\nset]$ that become increasingly close to some ``true'' parameter $\param_\star$
(in a sense that will be made precise in \autoref{prop:N-depend-on-T} below), converging at a rate $1/\sqrt{T}$.
The rationale for this assumption is that we are considering the large $T$ regime and we can therefore expect $\param_T$ to be close to $\tparam$. We discuss this further in \autoref{rem:kullback} below.
Note that, for a fixed observation sequence $\chunk{Y}{0}{T}$ (with finite $T$) we can instead appeal to \autoref{cor:finite-T}.

\begin{hyp}{A}
\item \label{assum:stationarity} For all $\param \in \Param$, the kernel $M^\param$ has a unique stationary distribution denoted as $\pi^\param$.
\end{hyp}
In what follows, for $\param\in\Param$
we let $\PE[\param]{}[\mu]$ and $\PP^\param_\mu$ refer to the expectation and probability, respectively, induced on $((\Xset\times\Yset)^{\nset},(\Xsigma\otimes\Ysigma)^{\otimes \nset})$ by a
Markov chain $\{(X_t, Y_t), t\in\nset\}$ evolving according to the state space model $(M^\param, G^\param)$ starting with $X_0\sim \mu$.
For simplicity, we write $\PEstat[\param]{}=\PE[\param]{}[\pi^\param]$ and $\PPstat^\param=\PP^\param_{\pi^\param}$.
For $1 \leq s \leq t$, we write
$$  \dens[\mu,s]{\chunk{y}{s}{t}}[\param] = \int \dens[\mu]{\chunk{y}{0}{t}}[\param]\kappa^{\otimes s}(\rmd y_{0:s-1}) $$
for the marginal probability density function of $Y_{s:t}$ \wrt\ $\kappa^{\otimes(t-s+1)}$.

Define for all $(t,\ell) \in \nset \times \nset^\star$,
\begin{align}
\label{eq:def-bar-b-t-ell}
&\Bbar[t]{t+\ell}{\mu}{\param} \eqdef \frac{\supnorm{\ewghtfuncf[\param]{Y_t}} \supnorm{\Kunf[\param]{\chunk{Y}{t+1}{t+\ell}} \bigone}}{\dens[\mu,t]{\chunk{Y}{t}{t+\ell}}[\param]}\eqsp, \\
\label{eq:def-bar-c-t-ell}
&\Cbar[t]{t+\ell}{\mu}{\param} \eqdef
\frac{\supnorm{\ewghtfuncf[\param]{Y_t}} \int \lambda(\rmd x_{t+1}) g^\param(x_{t+1},Y_{t+1}) \Kunf[\param]{\chunk{Y}{t+2}{t+\ell}} \bigone(x_{t+1})}{\dens[\mu,t]{\chunk{Y}{t}{t+\ell}}[\param]} \eqsp,
\end{align}
with, by convention,
\begin{align*}
\Bbar{t}{\mu}{\param} &= \supnorm{\ewghtfuncf[\param]{Y_t}}/ \dens[\mu,t]{Y_t}[\param] \eqsp, &
\Cbar[t]{t+1}{\mu}{\param} &= \supnorm{\ewghtfuncf[\param]{Y_t}} \int \lambda(\rmd x_{t+1}) g^\param(x_{t+1},Y_{t+1}) / \dens[\mu,t]{\chunk{Y}{t}{t+1}}[\param] \eqsp .
\end{align*}

\begin{hyp}{A}
\item \label{assum:m-upper-bound} There exists a constant $\sigma_+ \in \rset^+$ and a nonnegative measure $\lambda$ such that for all $\param \in\Param$ and $(x,A)\in \Xset \times \Xsigma$,
$$
M^\param(x,A) \leq \sigma_+ \lambda(A)\eqsp.
$$
\end{hyp}
Denote by $m^\param(x,\cdot)$ the Radon-Nikodym derivative
\begin{equation}
\label{eq:definition-density}
m^\param(x,x')=\frac{\rmd M^\param(x,\cdot)}{\rmd \lambda(\cdot)}(x')\eqsp.
\end{equation}
Under \ref{assum:m-upper-bound}, the stationary distribution $\pi^\param$ is absolutely continuous \wrt\ $\lambda$.
Furthermore, for notational simplicity
it is assumed that 
the initial distribution $\mu$ is absolutely continuous \wrt\ $\lambda$.
By abuse of notation, we write  $\pi^\param$ and $\mu$ also for the corresponding density functions.
\begin{hyp}{A}
\item \label{assum:m-g-positive} For all $\param \in\Param$ and  $(x,x',y) \in \Xset^2 \times \Yset$, $m^\param(x,x')>0$ and $ g^\param(x,y)>0$.
\end{hyp}

\begin{hyp}{A}
\item \label{assum:b-moment-bound}
There exist constants $(\ell_\star,\alpha) \in \nset^\star \times (0,1)$ such that,
\begin{align}
& \sup_{t \in \nset}\sup_{\param \in \Param} \PE[\param]{( \Bbar[t]{t+\ell}{\mu}{\param} )^\alpha}[\mu]    <\infty\,, \quad \mbox{for all } \ell \in \{0,\ldots, \ell_\star-1\} \eqsp, \label{eq:bound-moment-B}\\
& \sup_{t \in \nset} \sup_{\param \in \Param} \PE[\param]{( \Cbar[t]{t+\ell_\star}{\mu}{\param} )^\alpha}[\mu]   <\infty\eqsp. \label{eq:bound-moment-tilde-B}
\end{align}
\end{hyp}

\begin{theorem} \label{prop:N-depend-on-T}
Assume that \ref{assum:stationarity}, \ref{assum:m-upper-bound}, \ref{assum:m-g-positive}, and \ref{assum:b-moment-bound} hold.
Let $\tparam \in \Param$ and 
let $\sequence{\param}[T][\nset]$ be a sequence of parameters such that
\begin{equation}
\label{eq:nuit-de-singapour}
\limsup_{T \to \infty}  T
\PEstat[\tparam]{\ln\left(\frac{ m^{\tparam}(X_0,X_1)g^{\tparam}(X_1,Y_1)}
{ m^{\param_T}(X_0,X_1)g^{\param_T}(X_1,Y_1)}\right)}
 < \infty
\eqsp.
\end{equation}
Furthermore, assume that
\begin{equation}
  \label{eq:nuit-de-singapour-2}
  \PEstat[\tparam]{\ln\left(\frac{ \pi^{\tparam}(X_0)}{ \mu(X_0)}\right)} < \infty
   \eqsp,
\end{equation}
where $\mu$ is the initial distribution used in the \pg algorithm.
Then, for all $0\leq \gamma<\alpha$ (where $\alpha$ is defined in \ref{assum:b-moment-bound})
and for all sequences of integers $\{N_T\}_{T \geq 1}$ such that $N_T \sim T^{1/\gamma}$,
the sequence $\{\mineps{T}{N_T}^{-1}(\param_T)\}_{T \geq 1}$, defined in \eqref{eq:def-epsilon}  is $\PPstat^\tparam$-tight (bounded in probability).
\end{theorem}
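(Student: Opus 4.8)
The plan is to show that $\mineps{T}{N_T}^{-1}(\param_T) = \prod_{t=0}^T \bigl(1 + (2 B_{t,T}(\param_T)-1)/(N_T-1)\bigr)$ stays bounded in probability, where $B_{t,T}(\param)$ denotes the random variable from \eqref{eq:def-b-T} evaluated under the model $(M^\param, G^\param)$. Taking logarithms and using $\log(1+u) \leq u$ for $u \geq -1$ (note $2B_{t,T}-1 \geq -1$), it suffices to control $\sum_{t=0}^T B_{t,T}(\param_T) / (N_T-1)$ from above in probability. With $N_T \sim T^{1/\gamma}$ this reduces to showing $T^{-1/\gamma}\sum_{t=0}^T B_{t,T}(\param_T)$ is tight. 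First I would use a Markov-type inequality at the level of the $\gamma$-th moment: since $x \mapsto x^\gamma$ is concave for $\gamma < 1$, we have $\bigl(\sum_t B_{t,T}\bigr)^\gamma \leq \sum_t B_{t,T}^\gamma$, so it is enough to bound $\PEstat[\tparam]{\sum_{t=0}^T B_{t,T}(\param_T)^\gamma}$ by a constant multiple of $T$, after which tightness of $T^{-1/\gamma}\sum_t B_{t,T}$ follows from Markov's inequality applied with exponent $\gamma$.

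The next step is to relate $B_{t,T}(\param_T)^\gamma$ under $\PPstat^\tparam$ to the quantities $\Bbar[t]{t+\ell}{\mu}{\param_T}$ and $\Cbar[t]{t+\ell}{\mu}{\param_T}$ appearing in \ref{assum:b-moment-bound}, whose moments are controlled under $\PP^{\param_T}_\mu$. There are two changes of measure to absorb: (i) from the stationary law $\PPstat^\tparam$ (true parameter, stationary start) to $\PP^{\tparam}_\mu$ (true parameter, start from $\mu$), and (ii) from $\PP^{\tparam}_\mu$ to $\PP^{\param_T}_\mu$ (true parameter to working parameter). The Radon–Nikodym derivative of $\PP^{\param_T}_\mu$ restricted to $(X_{0:t+\ell}, Y_{0:t+\ell})$ against $\PPstat^\tparam$ is $\tfrac{\mu(X_0)}{\pi^{\tparam}(X_0)} \prod_{s=1}^{t+\ell} \tfrac{m^{\param_T}(X_{s-1},X_s) g^{\param_T}(X_s,Y_s)}{m^{\tparam}(X_{s-1},X_s) g^{\tparam}(X_s,Y_s)}$; I would split $B_{t,T}$ into the supremum over $\ell \geq \ell_\star$ (handled via $\Cbar{}{}{}{}$ using the decomposition that produced \eqref{eq:def-bar-c-t-ell}, exactly as in the proofs of Propositions~\ref{prop:strong-mixing}--\ref{prop:strong-mixing-2}) and over $\ell < \ell_\star$ (handled via $\Bbar{}{}{}{}$ directly), bound each by $\Cbar{}{}{}{}$ or $\Bbar{}{}{}{}$, and then use Hölder with a conjugate pair to separate the $\gamma$-th moment of $\Bbar{}{}{}{}$ (bounded uniformly in $t$ and $\param$ by \ref{assum:b-moment-bound}, for a slightly larger exponent $\alpha$ so there is room for Hölder) from a moment of the likelihood-ratio. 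The assumptions \eqref{eq:nuit-de-singapour} and \eqref{eq:nuit-de-singapour-2} bound $T$ times the expected log-likelihood-ratio per step and the expected log initial-density-ratio, respectively; combined with an elementary inequality such as $\PEstat[\tparam]{(\mathrm{d}\PP^{\param_T}_\mu/\mathrm{d}\PPstat^\tparam)^\beta}$ bounded in terms of $\exp(c\, \beta(1-\beta)^{-1} \sum_s \mathrm{KL}\text{-like terms})$ — more carefully, an inequality of the form $\log \PE{Z^\beta} \leq \beta(1-\beta)^{-1}\,\mathrm{something}$ is too lossy, so instead I would use that $\PEstat[\tparam]{(\mathrm{d}\PP^{\param_T}_\mu/\mathrm{d}\PPstat^\tparam)}= 1$ together with a truncation/Hölder argument exploiting that the per-step log-ratio has expectation $O(1/T)$ over $t+\ell = O(T)$ steps, giving a total that is $O(1)$ — these control the change-of-measure cost so that $\PEstat[\tparam]{B_{t,T}(\param_T)^\gamma} \leq C$ uniformly in $t \leq T$ and $T$.

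The main obstacle I anticipate is exactly this change-of-measure bookkeeping: $B_{t,T}$ contains a supremum over $\ell \in \{0,\dots,T-t\}$ \emph{inside} the expectation, so one cannot simply change measure for a fixed horizon — the Radon–Nikodym derivative depends on the (random, $\ell$-dependent) length of the trajectory under consideration. The clean way around this is to bound the supremum by a sum, $B_{t,T}^\gamma \leq \sum_{\ell=0}^{\ell_\star-1}\Bbar[t]{t+\ell}{\mu}{\param}^\gamma + \sum_{\ell \geq \ell_\star}\Cbar[t]{t+\ell}{\mu}{\param}^\gamma$ (the second sum being where the effort goes), and then for each fixed $\ell$ perform the change of measure on the $\sigma$-algebra generated by $(X_{0:t+\ell},Y_{0:t+\ell})$; the likelihood ratio then has $t+\ell+1 = O(T)$ factors, each contributing an $O(1/T)$ term in expectation by \eqref{eq:nuit-de-singapour}, so the geometric-type decay built into \ref{assum:b-moment-bound} (via the measure $\lambda$ dominating $M^\param$ in \ref{assum:m-upper-bound}, which forces $\Cbar{}{}{}{}$ to be summable in $\ell$ after taking moments) must be shown to survive the perturbation. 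Verifying that the summability in $\ell$ is uniform in $T$ despite the growing likelihood ratio — i.e. that $\sum_{\ell \geq \ell_\star}\PEstat[\tparam]{\Cbar[t]{t+\ell}{\mu}{\param_T}^\gamma}$ is bounded uniformly in $t$ and $T$ — is the delicate quantitative heart of the argument, and is presumably where the precise rate $1/\sqrt T$ in \eqref{eq:nuit-de-singapour} gets used, together with a Hölder split that trades a little of the $\alpha$-moment margin for the likelihood-ratio moment. Once that uniform bound $\PEstat[\tparam]{B_{t,T}(\param_T)^\gamma} \leq C$ is in hand, summing over $t \leq T$ gives $\PEstat[\tparam]{\sum_t B_{t,T}^\gamma} \leq C(T+1)$, Markov's inequality gives tightness of $T^{-1/\gamma}\sum_t B_{t,T}$, and the logarithmic bound above converts this into $\PPstat^\tparam$-tightness of $\{\mineps{T}{N_T}^{-1}(\param_T)\}$, completing the proof.
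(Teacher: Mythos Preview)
Your overall structure --- take logs, bound $\log(1+u)\leq u$, use subadditivity of $u\mapsto u^\gamma$, and reduce to a uniform bound on $\PE{B_{t,T}^\gamma}$ --- matches the paper. But there are two genuine gaps, and the first one is fatal to your plan as stated.

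\textbf{The supremum over $\ell$.} You propose to bound $\sup_{\ell} \leq \sum_\ell$ and then argue that $\sum_{\ell\geq \ell_\star}\PE{\Cbar[t]{t+\ell}{\mu}{\param}^\gamma}$ is finite because of ``geometric-type decay built into \ref{assum:b-moment-bound}''. There is no such decay. Assumption \ref{assum:b-moment-bound} only gives a moment bound at a \emph{single} horizon $\ell_\star$, and \ref{assum:m-upper-bound} only gives $M^\param\leq \sigma_+\lambda$, which lets you pass from $\Bhat{}{}{}{}$ to $\Chat{}{}{}{}$ but implies nothing about decay in $\ell$. In fact the paper shows (\autoref{lem:tilde-B-martingale}) that $\{\Chat[t]{t+\ell}{\mu}{\param}\}_{\ell\geq 0}$ is a $(\PP^\param_\mu,\{\mcf_{t+\ell}\})$-\emph{martingale}, so its moments cannot decay in $\ell$ and your sum diverges. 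The correct device is Doob's maximal inequality: since $\{(\Chat[t]{t+\ell}{\mu}{\param})^\alpha\}_{\ell\geq 0}$ is a nonnegative supermartingale (for $\alpha<1$), one gets $\PP^\param_\mu\bigl[\sup_{\ell\geq\ell_\star}(\Chat{}{}{}{})^\gamma\geq\delta\bigr]\leq \delta^{-\alpha/\gamma}\PE[\param]{(\Chat[t]{t+\ell_\star}{\mu}{\param})^\alpha}[\mu]$, and integrating in $\delta$ yields a uniform bound on $\PE[\param]{(\sup_\ell \Bhat[t]{t+\ell}{\mu}{\param})^\gamma}[\mu]$. The gap $\gamma<\alpha$ in the statement is precisely what makes this integration converge; it is not used for a H\"older split against a likelihood ratio.

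\textbf{The change of measure.} Your plan to absorb the passage from $\PPstat^\tparam$ to $\PP^{\param_T}_\mu$ by H\"older on the full $(X_{0:t+\ell},Y_{0:t+\ell})$ trajectory, term-by-term in $\ell$, is both unnecessary and --- as you yourself suspect --- lossy: the likelihood ratio has $O(T)$ factors and its nontrivial moments may grow exponentially. The paper's route is much cleaner: $\mineps{T}{N_T}^{-1}(\param_T)$ is $\sigma(Y_{0:T})$-measurable, so one changes measure \emph{once}, at the level of $Y_{0:T}$, via the elementary bound
\[
\PPstat^\tparam\{\mineps{}{}^{-1}>M\}\leq \rho\,\PP^{\param_T}_\mu\{\mineps{}{}^{-1}>M\}+\PPstat^\tparam\Bigl\{\tfrac{\densstat{Y_{0:T}}[\tparam]}{\dens[\mu]{Y_{0:T}}[\param_T]}>\rho\Bigr\}\eqsp,
\]
and controls the second term by Markov applied to $\log u + u^{-1}-1\geq 0$, which reduces it to the KL divergence $\PEstat[\tparam]{\log(\densstat{Y_{0:T}}[\tparam]/\dens[\mu]{Y_{0:T}}[\param_T])}$. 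This marginal KL is then bounded above by the full KL on $(X_{0:T},Y_{0:T})$, which factorizes into $T$ identical one-step terms plus an initial term, and is $O(1)$ exactly by \eqref{eq:nuit-de-singapour}--\eqref{eq:nuit-de-singapour-2}. After this, everything happens under $\PP^{\param_T}_\mu$, where \ref{assum:b-moment-bound} applies directly (uniformly in $\param\in\Param$), and no further H\"older or likelihood-ratio manipulation is needed.
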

\begin{proof}
The proof is postponed to \autoref{sec:proof:prop:N-depend-on-T}.
\end{proof}

\begin{remark} \label{rem:kullback}
For any $\param \in \Param$,
  \begin{align*}
  \operatorname{D}(\tparam || \param) \eqdef \PEstat[\tparam]{\ln\left(\frac{ m^{\tparam}(X_0,X_1)g^{\tparam}(X_1,Y_1)}
        { m^{\param}(X_0,X_1)g^{\param}(X_1,Y_1)}\right)} \eqsp,
  \end{align*}
  is the expectation under the stationary distribution $\tpi$ of the Kullback-Leibler divergence between
  the conditional distribution of $\cdens{X_1,Y_1}{X_0}[\tparam]$ and $\cdens{X_1,Y_1}{X_0}[\param]$. Hence,
  $\operatorname{D}(\tparam || \param) \geq 0$ for all $\param \in \Param$ and  $\operatorname{D}(\tparam || \tparam)=0$.
  Assuming that $\tparam$ belongs to the interior of $\Param$ and that the function $\param \mapsto \operatorname{D}(\tparam||\param)$ is twice differentiable at $\tparam$, a Taylor expansion at $\tparam$ yields
  \begin{align*}
  \operatorname{D}(\tparam||\param)=  \frac{1}{2} (\tparam - \param)^t H^\tparam (\tparam - \param) + o( \Vert \tparam - \param \Vert^2) \eqsp,
  \end{align*}
  where $H^\param$ is the Hessian of $\param \mapsto \operatorname{D}(\tparam|| \param)$. 
  Consequently, for regular statistical models, \eqref{eq:nuit-de-singapour} holds provided that $\theta_T$ converges to $\tparam$ at a rate $1/\sqrt{T}$,
  \ie,
  $$
  \param_T = \tparam + \varrho_T/ \sqrt{T}\eqsp,
  $$
  where the sequence $\sequence{\varrho}[T][\nset]$ is bounded: $\sup_{T \geq 0} \| \varrho_T \| < \infty$.
\end{remark}

\begin{remark}\label{rem:deterministic}
  It should be noted that our results do not cover explicitly the case when the sequence $\sequence{\varrho}[T][\nset]$ is stochastic.
  Still, we believe that our results hint at the possibility of obtaining a non-degenerate lower bound on the
  minorizing constant also in the stochastic case, given that $\sequence{\varrho}[T][\nset]$ is tight, under conditions that are much weaker than the previously considered
  strong mixing assumption.
\end{remark}

\begin{remark}
  It is interesting to note that we do not require the initial distribution $\mu$ to be equal to $\tpi$,
  but only that the Kullback-Leibler divergence \eqref{eq:nuit-de-singapour-2} is bounded.
  Hence, we may use a quite arbitrary initial distribution and still obtain a sequence of inverse minorization constants that is tight \wrt\ $\PPstat^\tparam$.
\end{remark}
A straightforward generalization of the above result is to let the initial distribution belong to a parametric family of distributions,
$\{\mu^\param : \param\in\Param\}$. The condition \eqref{eq:nuit-de-singapour-2} should then be replaced by
\begin{align*}
  \limsup_{T\to\infty} \PEstat[\tparam]{\ln\left(\frac{ \pi^{\tparam}(X_0)}{ \mu^{\param_T}(X_0)}\right)}
  < \infty
  \eqsp.
\end{align*}
Allowing for the initial distribution to depend on $\param$ can be useful in some cases.  For instance, if the stationary distribution $\pi^\theta$ is known
it may serve as a natural choice for the initial distribution used in the algorithm.

\section{Examples}\label{sec:examples}%
In this section we consider two examples to illustrate how the assumptions of \autoref{prop:N-depend-on-T}
can be verified in practice.
We preface the examples by a technical lemma, which will be very useful for checking the assumptions.

\begin{lemma}
\label{lem:holder-inequality}
Let $(\Zset,\Zsigma)$ be a measurable set and $\xi$ be a measure on $(\Zset,\Zsigma)$.
Let $\alpha\in\ooint{0,1}$ and let $\varphi,\psi$ and $q$ be  nonnegative measurable functions, such that
\begin{align}
\label{eq:conditions-holder-1}
&\int \psi(z) \varphi(z) \xi(\rmd z) < \infty \eqsp,\\
\label{eq:conditions-holder-2}
&\int \varphi^{-\frac{\alpha}{1-\alpha}}(z) q(z) \xi(\rmd z) < \infty \eqsp.
\end{align}
Then,
\begin{equation}
\label{eq:hoder-inequality-conclusion}
\int \psi^\alpha(z) q^{1-\alpha}(z) \xi (\rmd z) < \infty \eqsp.
\end{equation}
\end{lemma}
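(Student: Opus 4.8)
The plan is to recognize \eqref{eq:hoder-inequality-conclusion} as a direct application of Hölder's inequality with the conjugate exponents $p = 1/\alpha$ and $p' = 1/(1-\alpha)$, which satisfy $1/p + 1/p' = \alpha + (1-\alpha) = 1$. The key algebraic identity is the factorization
\begin{equation*}
\psi^\alpha(z)\, q^{1-\alpha}(z) = \bigl( \psi(z)\varphi(z) \bigr)^{\alpha}\, \bigl( \varphi^{-\frac{\alpha}{1-\alpha}}(z)\, q(z) \bigr)^{1-\alpha} \eqsp,
\end{equation*}
which holds pointwise wherever $0 < \varphi(z) < \infty$, since there $\varphi^{\alpha}\varphi^{-\alpha} = 1$. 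Granting this, Hölder's inequality applied to the nonnegative functions $z \mapsto \psi(z)\varphi(z)$ and $z \mapsto \varphi^{-\alpha/(1-\alpha)}(z) q(z)$ gives
\begin{equation*}
\int \psi^\alpha q^{1-\alpha} \, \xi(\rmd z) \leq \Bigl( \int \psi \varphi \, \xi(\rmd z) \Bigr)^{\alpha} \Bigl( \int \varphi^{-\frac{\alpha}{1-\alpha}} q \, \xi(\rmd z) \Bigr)^{1-\alpha} \eqsp,
\end{equation*}
and the right-hand side is finite by \eqref{eq:conditions-holder-1} and \eqref{eq:conditions-holder-2}.

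Before invoking Hölder I would first take care of the set $Z_1 \eqdef \{ z \in \Zset : \varphi(z) \in \{ 0, +\infty \} \}$, on which the factorization above is not literally meaningful. With the usual convention $0 \cdot (+\infty) = 0$, finiteness of $\int \psi\varphi\,\xi(\rmd z)$ forces $\psi = 0$ $\xi$-a.e.\ on $\{ \varphi = +\infty \}$, and finiteness of $\int \varphi^{-\alpha/(1-\alpha)} q\, \xi(\rmd z)$ forces $q = 0$ $\xi$-a.e.\ on $\{ \varphi = 0 \}$; in both cases $\psi^\alpha q^{1-\alpha} = 0$ $\xi$-a.e.\ on $Z_1$ (using $0^\beta = 0$ for $\beta \in \{\alpha, 1-\alpha\}$), so $\int_{Z_1} \psi^\alpha q^{1-\alpha}\, \xi(\rmd z) = 0$. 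It therefore suffices to bound the integral over $Z_0 \eqdef \Zset \setminus Z_1 = \{ 0 < \varphi < \infty \}$, where the factorization holds with all factors finite; applying Hölder's inequality on $(Z_0, \Zsigma|_{Z_0}, \xi)$ and then enlarging the domain of the two factor integrals back to $\Zset$ yields
\begin{equation*}
\int_{Z_0} \psi^\alpha q^{1-\alpha} \, \xi(\rmd z) \leq \Bigl( \int \psi \varphi \, \xi(\rmd z) \Bigr)^{\alpha} \Bigl( \int \varphi^{-\frac{\alpha}{1-\alpha}} q \, \xi(\rmd z) \Bigr)^{1-\alpha} < \infty \eqsp.
\end{equation*}
Adding the vanishing contribution from $Z_1$ gives \eqref{eq:hoder-inequality-conclusion}.

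There is no serious obstacle here: the result is essentially one line of Hölder once the integrand is split as above. The only point requiring a little care is the bookkeeping on the degenerate set $Z_1$ — i.e.\ verifying that hypotheses \eqref{eq:conditions-holder-1}--\eqref{eq:conditions-holder-2} indeed force the integrand to vanish $\xi$-a.e.\ there — so that the factorization and the Hölder step can be carried out on $Z_0$ without any ambiguous $0 \cdot \infty$ expressions.
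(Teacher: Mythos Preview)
Your proof is correct and follows exactly the same approach as the paper: factor $\psi^\alpha q^{1-\alpha} = (\psi\varphi)^\alpha (\varphi^{-\alpha/(1-\alpha)} q)^{1-\alpha}$ and apply H\"older's inequality with exponents $1/\alpha$ and $1/(1-\alpha)$. The paper's proof is the one-line version that omits your careful handling of the degenerate set $Z_1 = \{\varphi \in \{0,+\infty\}\}$, so your argument is if anything slightly more complete.
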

\begin{proof}
The result follows from H{\"o}lder's inequality:
\begin{align*}
\int \psi^\alpha(z) q^{1-\alpha}(z) \xi(\rmd z)
&= \int \left[ \psi(z) \varphi(z) \right]^\alpha  \, \left[\varphi^{-\frac{\alpha}{1-\alpha}}(z) q(z)\right]^{1-\alpha} \xi(\rmd z) \\
&\leq \left( \int \psi(z) \varphi(z) \xi(\rmd z) \right)^{\alpha} \left( \int \varphi^{-\frac{\alpha}{1-\alpha}}(z) q(z)
\xi(\rmd z) \right)^{1-\alpha} \eqsp.
\end{align*}
\end{proof}

\subsection{A nonlinear model with additive measurement noise}\label{sec:examples:addtive-noise}
\newcommand\ANparam{\xi}
\newcommand\ANParam{\Xi}
We consider first a class of nonlinear state space models where the latent process is observed in additive noise,
\begin{align}
\label{eq:state-equation-additive-noise}
X_{t+1}&= h^\ANparam(X_t) + \sigma_W W_{t+1} \\
\label{eq:measuremant-equation-additive-noise}
Y_t &= \phi X_t + \sigma_U U_t
\end{align}
where $\sequence{W}[t][\nset]$ and $\sequence{U}[t][\nset]$ are two independent sequences of \iid\ standard Gaussian random variables and
$\{ h^\ANparam, \ANparam \in \ANParam \}$ is a parametric family of measurable real-valued functions, where $\ANParam$ is a compact subset of a Euclidean space. We denote by $\param = (\ANparam,\phi,\sigma_U,\sigma_W)$ the parameters of the model. It is assumed that $\param \in \Param$, where
$\Param$ is a compact subset of $\ANParam \times \ooint{0,\infty}^3$. We assume that for all $\ANparam \in \ANParam$, $x \mapsto h^\ANparam(x)$ is continuous and $\sup_{\ANparam \in \ANParam} \limsup_{x \to \infty} |h^\ANparam(x)|/|x| < 1$. For any $\delta > 0$, we set $V_\delta(x) = \rme^{\delta |x|}$. It is easily seen that there exist constants $\lambda_\delta \in \ooint{0,1}$ and $b_\delta < \infty$ such that
\begin{equation}
\label{eq:drift-condition-additive-noise}
\sup_{\param \in \Param} \PE[\param]{ V_\delta( X_1) }[x] \leq \lambda_\delta V_\delta(x) + b_\delta \eqsp.
\end{equation}
The Markov chain is strong Feller, Harris recurrent, all the compact sets are small, and the Markov chain admits a single invariant distribution. Therefore, \ref{assum:stationarity} and \ref{assum:m-upper-bound} are satisfied.
Since both the transition density and the observation density are Gaussian, \ref{assum:m-g-positive} is also readily satisfied.
We will thus focus on verifying the moment assumption \ref{assum:b-moment-bound}.

First, note that
\begin{equation}
\label{eq:moment-control-additive-noise}
\sup_{t \in \nset} \sup_{\param \in \Param} \PE[\param]{V_\delta(X_t)}[x]
\leq  \lambda_\delta^t V_\delta(x) + b_\delta(1+\lambda_\delta+\cdots+\lambda_\delta^{t-1})
\leq  V_\delta(x) + b_\delta / (1-\lambda_\delta) \eqsp.
\end{equation}

We assume that the initial distribution $\mu$ is such that $\mu(V_\delta) < \infty$. Therefore,
\begin{equation}
\label{eq:moment-control-additive-noise-1}
\sup_{t \in \nset} \sup_{\param \in \Param} \PE[\param]{V_\delta(X_t)}[\mu] < \infty \eqsp.
\end{equation}
Interestingly for the model \eqref{eq:state-equation-additive-noise}--\eqref{eq:measuremant-equation-additive-noise} it is possible
to use the fully adapted proposal kernel \citep{DoucetGA:2000} as defined in \eqref{eq:bootstrap-fully-adapted}, for which
\begin{align}
  \nonumber
  \ewghtfunc[\param]{y}{x}{x'}&= \int m^\param(x, x^{\prime\prime}) g^\param(x^{\prime\prime},y)\rmd x^{\prime\prime} \\
  \label{eq:additive-noise:definition-weightfunc}
  &= \frac{1}{\sqrt{2\pi (\phi^2\sigma_W^2 + \sigma_U^2) }}\exp\left( -\frac{1}{2(\phi^2\sigma_W^2 + \sigma_U^2)}\left(y-\phi h^\ANparam(x) \right)^2 \right) \eqsp,
\end{align}
for all $(x,x') \in \rset \times \rset$, $y \in \rset$, and $\param \in \Param$.
It can be seen that, for any $\param\in\Param$ and any $y \in \rset$,
\[
\int_{-\infty}^{\infty} g^\param(x,y) \rmd x = \frac{1}{\phi} \eqsp,   \quad \text{and} \quad
\supnorm{\ewghtfuncf[\param]{y}} \leq \frac{1}{\sqrt{2\pi (\phi^2\sigma_W^2 + \sigma_U^2) }} \eqsp,
\]
which implies the existence of constants $D_1$ and $D_2$ such that
\begin{equation}
\label{eq:borne-util-additive-noise}
\sup_{\param \in \Param} \int_{-\infty}^{\infty} g^\param(x,y) \rmd x  \leq D_1\eqsp,   \quad \text{and} \quad
\sup_{\param \in \Param} \supnorm{\ewghtfuncf[\param]{y}}  \leq D_2 \eqsp.
\end{equation}
Analogous bounds hold also if we would instead consider the bootstrap proposal (see \eqref{eq:bootstrap-fully-adapted}).

To verify \ref{assum:b-moment-bound} we let $\ell_\star=1$ and show that,
\begin{equation}
\label{eq:additive-noise:moment:A4}
\sup_{t \in \nset} \sup_{\param\in\Param} \PE[\param]{ ( \Bbar{t}{\mu}{\param})^\alpha}[\mu]<\infty\, , \quad
\sup_{t \in \nset} \sup_{\param\in\Param} \PE[\param]{ (\Cbar[t]{t+1}{\mu}{\param})^\alpha}[\mu] <\infty\eqsp,
\end{equation}
for some (and actually any) $\alpha \in \coint{0,1}$.
Consider first
\begin{equation}
\label{eq:additive-noise:borne-B00alpha}
\PE[\param]{ (\Bbar{t}{\mu}{\param} )^\alpha}[\mu] = \int \supnorm[\alpha]{\ewghtfuncf[\param]{y_t}} \{\dens[\mu,t]{y_t}[\param]\}^{1-\alpha} \rmd y_t
\leq D_2^\alpha \int \{\dens[\mu,t]{y_t}[\param]\}^{1-\alpha} \rmd y_t \eqsp,
\end{equation}
where the inequality follows from \eqref{eq:borne-util-additive-noise}.
We apply \autoref{lem:holder-inequality} to establish a bound for the right-hand side of \eqref{eq:additive-noise:borne-B00alpha}.
Let $\psi(y) = 1$ and $\varphi(y) = 1/(1\vee |y|^2)$.
With these definitions the first condition in \eqref{eq:conditions-holder-1} is satisfied. To check \eqref{eq:conditions-holder-2},
note that
\begin{equation*}
  \varphi^{-\frac{\alpha}{1-\alpha}}(y) = (1 \vee |y|^2)^{\frac{\alpha}{1-\alpha}} \leq 1+|y|^{2 \alpha/(1-\alpha)} \eqsp.
\end{equation*}
The integral in \eqref{eq:conditions-holder-2} may be expressed as
\begin{equation}
  \label{eq:additive-noise:what-we-need}
  \int \varphi^{-\frac{\alpha}{1-\alpha}}(y_t) \dens[\mu,t]{y_t}[\param] \rmd y_t = \PE[\param]{\varphi^{-\frac{\alpha}{1-\alpha}}(Y_t)}[\mu]=
  \PE[\param]{\PE[\param]{\varphi^{-\frac{\alpha}{1-\alpha}}(Y_0)}[X_t]}[\mu] \eqsp.
\end{equation}
Since $Y_0= \phi X_0 + \sigma U_0$, we get that for any $x \in \Xset$,
$\PE[\param]{ \varphi^{-\frac{\alpha}{1-\alpha}}(Y_0) }[x] \leq 1 + \PE{ |\phi x + U|^{2\alpha/(1-\alpha)}}$,
where $U$ is standard normal. This implies that there exists a constant $D_3$ such that, for all $x \in \Xset$ and all $\param\in\Param$,
\begin{equation}
\label{eq:gainesville}
\PE[\param]{ \varphi^{-\frac{\alpha}{1-\alpha}}(Y_0) }[x]
\leq D_3 (1+ |x|^{2\alpha/(1-\alpha)}) \eqsp.
\end{equation}
Plugging this into \eqref{eq:additive-noise:what-we-need} and using \eqref{eq:moment-control-additive-noise-1}, this verifies the second condition in \eqref{eq:conditions-holder-2}.
\autoref{lem:holder-inequality} can thus be used to conclude that
$ \PE[\param]{ (\Bbar{t}{\mu}{\param})^\alpha}[\mu] < \infty$ for all $\alpha \in \ooint{0,1}$.
Since this holds for any $t \in \nset$ and $\param\in\Param$, we obtain the first part of \eqref{eq:additive-noise:moment:A4}.

Next, we consider
\begin{align}
\nonumber
\PE[\param]{(\Cbar[t]{t+1}{\mu}{\param})^\alpha}[\mu]
&= \PE[\param]{\frac{\supnorm[\alpha]{\ewghtfuncf[\param]{Y_t}} \left( \int g^\param(x_{t+1},Y_{t+1}) \rmd x_{t+1} \right)^\alpha}{ \{\dens[\mu,t]{\chunk{Y}{t}{t+1}}[\param] \}^\alpha}}[\mu]\\
\nonumber
&\leq D_1^\alpha D_2^\alpha  \iint  \{ \dens[\mu,t]{\chunk{y}{t}{t+1}}[\param] \}^{1-\alpha} \rmd \chunk{y}{t}{t+1} \eqsp.
\label{eq:additive-noise:borne-B01alpha}
\end{align}
We will again make use of \autoref{lem:holder-inequality} to bound this quantity.
Proceeding analogously to above, we let $\psi(y_0,y_1) = 1$ and
\begin{equation}
\varphi(y_0,y_1) = \frac{1}{(y_0^2 \vee 1) \, (y_1^2 \vee 1)} \eqsp,
\label{eq:additive-noise:definition-phi-1}
\end{equation}
for which \eqref{eq:conditions-holder-1} is satisfied.
To check \eqref{eq:conditions-holder-2}, we use the conditional independence of the observations given the states and \eqref{eq:additive-noise:what-we-need} to get, for any $\param \in \Param$,
\begin{multline}
\nonumber
\iint \varphi^{-\frac{\alpha}{1-\alpha}}(\chunk{y}{t}{t+1}) \dens[\mu,t]{\chunk{y}{t}{t+1}}[\param] \rmd \chunk{y}{t}{t+1}
= \PE[\param]{\CPEdoup[\mu]{\param}{\varphi^{-\frac{\alpha}{1-\alpha}}(\chunk{Y}{t}{t+1})}{ X_{t:t+1} }}[\mu] \\
\leq \PE[\param]{\PE[\param]{1 + |Y_0|^{\frac{2\alpha}{1-\alpha}}}[X_t] \PE[\param]{1 + |Y_0|^{\frac{2\alpha}{1-\alpha}}}[X_{t+1}] }[\mu] \leq D_3^2 \PE[\param]{(1+ |X_t|^{\frac{2\alpha}{1-\alpha}}) (1+ |X_{t+1}|^{\frac{2\alpha}{1-\alpha}})}[\mu]
\eqsp.
\label{eq:additive-noise:interediary-result-1}
\end{multline}
From the Cauchy-Schwarz inequality we get, by using \eqref{eq:moment-control-additive-noise-1},
\begin{equation*}
\sup_{t \in \nset}
\sup_{\param \in \Param} \PE[\param]{\varphi^{-\frac{\alpha}{1-\alpha}}(\chunk{Y}{t}{t+1})}[\mu]
\leq D_3^2 \sup_{t \in \nset} \sup_{\param \in \Param} \PE[\param]{(1+|X_t|^{\frac{2\alpha}{1-\alpha}})^2}[\mu] < \infty \eqsp.
\end{equation*}
This shows that \eqref{eq:conditions-holder-2} is satisfied for any $\param\in\Param$ and any $t\in\nset$ which, by \autoref{lem:holder-inequality}
implies $\sup_{t\in\nset}\sup_{\param\in\Param}\PE[\param]{(\Cbar[t]{t+1}{\mu}{\param})^\alpha} < \infty$  for all $\alpha \in \coint{0,1}$, verifying
\ref{assum:b-moment-bound}.

Provided that $\theta_T$ converges to $\tparam$ at a rate $1/\sqrt{T}$ (see \autoref{rem:kullback}),
we may therefore apply \autoref{prop:N-depend-on-T} which shows that for any $\gamma \in \ooint{0,1}$, $\{\mineps{T}{N_T}^{-1}(\param_T) \}_{T \geq 1}$ is tight with $N_T \sim T^{1/\gamma}$.

\subsection{A stochastic volatility model}

The canonical model in stochastic volatility for discrete-time data has been introduced by
\cite{taylor:1982}  and worked out since then by many authors; see  \cite{hull:white:1987} and \cite{jacquier:polson:rossi:1994} for early references and \cite{shephard:andersen;2009} for an up-to-date survey. In this model, the hidden
volatility process, $\sequence{X}[t][\nset]$,
follows a first order autoregression,
\begin{align}
\label{eq:stochasticvolatilitycanonical-hidden}
X_{t+1} & = \phi X_t + \sigma W_{t+1} \,, \\
\label{eq:stochasticvolatilitycanonical-observation}
Y_t & = \beta \exp(X_t/2) U_t  \,.
\end{align}
where $\sequence{W}[t][\nset]$ and
$\sequence{U}[t][\nset]$ are white Gaussian noise with mean zero and unit variance. The error processes
$\sequence{W}[t][\nset]$ and $\sequence{U}[t][\nset]$
are assumed to be mutually independent. We denote by $\param= (\phi,\sigma,\beta) \in \Param$, where $\Param$ is
a compact subset of $\ooint{-1,1} \times \ooint{0,\infty}^2$.
For $\delta > 0$, denote by $V_\delta(x)= \rme^{\delta |x|}$ and let
$\mu$ an arbitrary distribution on $(\rset, \borel(\rset))$, for which $\mu(V_\delta) < \infty$.

For this model the transition kernel and the likelihood of the observation are given by
\begin{align}
\label{eq:likelihood-SV}
&m^\param(x,x')= \frac{1}{\sqrt{2 \pi \sigma^2}} \exp\left( - \frac{1}{2 \sigma^2} (x'- \phi x)^2 \right) \qquad\text{and} \\
&g^\param(x,y)= \frac{1}{\sqrt{2 \pi \beta^2}} \rme^{-(x/2 + (y^{2}/2\beta^2) \rme^{-x})} \eqsp,
\end{align}
respectively. For any $\param \in \Param$, the autoregressive process $\sequence{X}[t][\nset]$ has a unique stationary distribution $\pi^\param$, which is Gaussian, with mean 0 and variance $\sigma^2/(1-\phi^2)$. Hence, \ref{assum:stationarity} is satisfied.

We consider the bootstrap proposal kernel as defined in \eqref{eq:bootstrap-fully-adapted}, in which case
\begin{equation}
\label{eq:definition-weightfunc-SV}
\ewghtfunc[\param]{y}{x}{x'}= g^\param(x',y) \eqsp, \quad \text{for all $(x,x') \in \rset \times \rset$ and $y \in \rset$} \eqsp.
\end{equation}
Note that, $\supnorm{\ewghtfuncf[\param]{y}}= \supnorm{g^\param(\cdot,y)}$.
Assumptions \ref{assum:m-upper-bound} and \ref{assum:m-g-positive} are readily satisfied.
We finally check \ref{assum:b-moment-bound}. It is easily shown that, for all $\param \in \Param$,
\begin{align}
\label{eq:borne-util-SV-1}
&\int_{-\infty}^{\infty} g^\param(x,y) \rmd x = \frac{D_1}{|y|} \eqsp,  && D_1 \eqdef \frac{1}{\sqrt{2 \pi}} \int_0^\infty \frac{\rme^{-u/2}}{\sqrt{u}} \rmd u \eqsp \\
\label{eq:borne-util-SV-2}
&\sup_{x \in \rset} g^\param(x,y) = \frac{D_2}{|y|} \eqsp, && D_2 \eqdef \frac{1}{\sqrt{2 \pi \rme}} \eqsp.
\end{align}
Similarly to \autoref{sec:examples:addtive-noise} we will check \ref{assum:b-moment-bound} with $\ell_\star=1$, \ie, we show that
\begin{equation}
\label{eq:moment:A4}
\sup_{t \in \nset} \, \sup_{\param\in\Param} \PE[\param]{ ( \Bbar{t}{\mu}{\param} )^\alpha}[\mu]<\infty\, , \quad
\sup_{t \in \nset} \, \sup_{\param\in\Param} \PE[\param]{ (\Cbar[t]{t+1}{\mu}{\param})^\alpha}[\mu]<\infty\eqsp,
\end{equation}
for any $\alpha \in (0,1)$. Note that we cannot expect \eqref{eq:moment:A4} to hold with $\alpha=1$ since,
\begin{align*}
\PE[\param]{\Bbar{t}{\mu}{\param} }[\mu]=\PE[\param]{\frac{\supnorm{\ewghtfuncf[\param]{Y_t}} }{\dens[\mu,t]{Y_t}[\param]}}[\mu]
=\int \sup_{x \in \rset} g^\param(x,y) \rmd y=D_2 \int \frac{1}{|y|} \rmd y =\infty\eqsp.
\end{align*}

We now turn to the proof of \eqref{eq:moment:A4}.
Note first that 
\[
\limsup_{|x| \to \infty} \sup_{\param \in \Param} \frac{\PE[\param]{V_\delta(X_1)}[x]}{V_\delta(x)} =0 \eqsp,
\]
and for any $M < \infty$,
\(
\sup_{|x| \leq M} \sup_{\param \in \Param} \PE[\param]{V_\delta(X_1)}[x] < \infty .
\)
Therefore, there exist constants $\lambda_\delta \in \ooint{0,1}$ and $b_\delta < \infty$ such that, for all $x \in \Xset$,
\begin{equation}
\label{eq:drift-stochastic-volatility}
\sup_{\param \in \Param} \PE[\param]{V_\delta(X_1)}[x] \leq \lambda_\delta V_\delta(x) + b_\delta \eqsp.
\end{equation}
Analogously to \eqref{eq:moment-control-additive-noise}, this implies that, for all $\delta>0$, 
\begin{equation}
\label{eq:nuit-de-sumatra}
\sup_{t \in \nset} \sup_{\param \in \Param} \PE[\param]{V_\delta(X_t)}[\mu] \leq \mu(V_\delta) + b_\delta / (1-\lambda_\delta) < \infty \eqsp. 
\end{equation}
Using \eqref{eq:definition-weightfunc-SV} and \eqref{eq:borne-util-SV-2},
we get
\begin{equation}
\label{eq:borne-B00alpha}
\PE[\param]{ (\Bbar{t}{\mu}{\param} )^\alpha}[\mu] = \int \supnorm[\alpha]{\ewghtfuncf[\param]{y_t}} \{\dens[\mu,t]{y_t}[\param] \}^{1-\alpha} \rmd y_t
\leq D_2^\alpha \int |y_t|^{-\alpha} \{\dens[\mu,t]{y_t}[\param] \}^{1-\alpha} \rmd y_t \eqsp.
\end{equation}
We apply \autoref{lem:holder-inequality} to establish a bound for \eqref{eq:borne-B00alpha}. Consider the functions $\varphi$ and $\psi$ given by
\begin{align}
\label{eq:definition-psi}
&\psi(y) = 1/ |y| \eqsp, \\
\label{eq:definition-phi}
&\varphi(y)= \frac{|y|^\gamma}{|y|^2 \vee 1} \eqsp, \quad \text{with} \quad \frac{\gamma \alpha}{1 -\alpha} < 1 \eqsp, \quad 0 < \gamma < 1 \eqsp.
\end{align}
With these definitions, we get
\begin{equation*}
\int \varphi(y) \psi(y) \rmd y = \int \frac{1}{|y|} \frac{|y|^\gamma}{|y|^2 \vee 1} \rmd y  < \infty \ \, ,
\end{equation*}
showing that the first condition in \eqref{eq:conditions-holder-1} is satisfied. We now check \eqref{eq:conditions-holder-2}:
\begin{equation}
\label{eq:what-we-need}
\int \varphi^{-\frac{\alpha}{1-\alpha}}(y_t) \dens[\mu,t]{y_t}[\param] \rmd y_t
= \PE[\param]{\PE[\param]{\varphi^{-\frac{\alpha}{1-\alpha}}(Y_0)}[X_t]}[\mu]\eqsp.
\end{equation}
Since $Y_0 = \beta\exp(X_0/2)U_0$ it follows that
 $\PE[\param]{\varphi^{-\frac{\alpha}{1-\alpha}}(Y_0)}[x]= \PE{\varphi^{-\frac{\alpha}{1-\alpha}}(\beta \rme^{x/2} U)}$ where
$U$ is standard normal. We have
\begin{align*}
&\PE{\left( \frac{\beta^2 \rme^{x} U^2 \vee 1}{\beta^\gamma \rme^{\gamma x/2} |U|^\gamma} \right)^{\frac{\alpha}{1-\alpha}}} \\
&\qquad\leq \PE{(\beta \rme^{x/2} |U|)^{-\frac{\gamma \alpha}{1-\alpha}} \1_{\{\beta |U| \rme^{x/2} \leq 1\}}} +
\PE{(\beta^2 \rme^{x} U^2)^{\frac{\alpha}{1-\alpha}} \1_{\{\beta |U| \rme^{x/2} > 1\}}} \\
&\qquad\leq (\beta \rme^{\frac{x}{2}})^{-\frac{\gamma \alpha}{1-\alpha}} \PE{|U|^{-\frac{\gamma \alpha}{1-\alpha}}} +
(\beta^2 \rme^{x})^\frac{\alpha}{1-\alpha} \PE{|U|^{\frac{2 \alpha}{1-\alpha}}}  \eqsp.
\end{align*}
Since $\gamma \alpha/(1-\alpha) < 1$ it holds that $\PE{|U|^{-\frac{\gamma \alpha}{1-\alpha}}} < \infty$ and, additionally,
$\PE{|U|^{\frac{2 \alpha}{1-\alpha}}} < \infty$. Therefore, there exist constants $D_3 < \infty$ and $\delta > 0$ such that,
 for all $x \in \rset$ and $\param \in \Param$,
\begin{equation}
\label{eq:laborne}
\PE[\param]{\varphi^{-\frac{\alpha}{1-\alpha}}(Y_0)}[x] \leq D_3 \rme^{\delta |x|} = D_3 V_\delta(x) \eqsp.
\end{equation}
Using \eqref{eq:what-we-need}, \eqref{eq:laborne} and \eqref{eq:nuit-de-sumatra} verifies the second condition in \eqref{eq:conditions-holder-2}.
\autoref{lem:holder-inequality} can thus be used to conclude that
$ \PE[\param]{ (\Bbar{t}{\mu}{\param})^\alpha}[\mu] < \infty$ for all $\alpha \in \ooint{0,1}$.
Since this holds for any $t \in\nset$ and $\param\in\Param$, we establish the first part of \eqref{eq:moment:A4}.

We will now check that, for all $\alpha \in \ooint{0,1}$, $\PE[\param]{(\Cbar[t]{t+1}{\mu}{\param})^\alpha}[\mu] < \infty$.
Using \eqref{eq:borne-util-SV-1} and \eqref{eq:borne-util-SV-2}, we get
\begin{align}
\nonumber
\PE[\param]{(\Cbar[t]{t+1}{\mu}{\param})^\alpha}[\mu]
&= \PE[\param]{\frac{\supnorm[\alpha]{\ewghtfuncf[\param]{Y_t}} \left( \int g^\param(x_{t+1},Y_{t+1}) \rmd x_{t+1} \right)^\alpha}{ (\dens[\mu,t]{\chunk{Y}{t}{t+1}}[\param] )^\alpha}}\\
\nonumber
&= \iint \supnorm[\alpha]{\ewghtfuncf[\param]{y_t}} \left( \int g^\param(x_{t+1},y_{t+1}) \rmd x_{t+1} \right)^{\alpha} ( \dens[\mu,t]{\chunk{y}{t}{t+1}}[\param] )^{1-\alpha} \rmd \chunk{y}{t}{t+1} \eqsp, \\
&\leq D_1^\alpha D_2^\alpha \iint |y_t y_{t+1}|^{-\alpha} ( \dens[\mu,t]{\chunk{y}{t}{t+1}}[\param] )^{1-\alpha} \rmd \chunk{y}{t}{t+1} \eqsp.
\label{eq:borne-B01alpha}
\end{align}
We use again \autoref{lem:holder-inequality} with
\begin{equation}
\label{eq:definition-psi-1}
\psi(y_0,y_1) = |y_0|^{-1} |y_1|^{-1}
\end{equation}
and
\begin{equation}
\varphi(y_0,y_1) = \frac{|y_0|^\gamma |y_1|^\gamma}{(y_0^2 \vee 1) \, (y_1^2 \vee 1)} \eqsp,
\label{eq:definition-phi-1}
\end{equation}
with $\gamma \alpha / (1-\alpha) < 1$ and  $\gamma \in \ooint{0,1}$. Note first that
\begin{equation}
\label{eq:ile-de-java}
\iint \psi(y_0,y_1) \varphi(y_0,y_1)\rmd \chunk{y}{0}{1}
= \iint \left\{ (|y_0| |y_1|)^{1-\gamma} (y_0^2 \vee 1) (y_1^2 \vee 1) \right\}^{-1} \rmd \chunk{y}{0}{1} < \infty \eqsp.
\end{equation}
Hence, \eqref{eq:conditions-holder-1} is satisfied. We finally check \eqref{eq:conditions-holder-2}.
Using the conditional independence of the observations given the states and
\eqref{eq:laborne},
\begin{multline*}
\iint \varphi^{-\frac{\alpha}{1-\alpha}}(\chunk{y}{t}{t+1}) \dens[\mu,t]{\chunk{y}{t}{t+1}}[\param] \rmd \chunk{y}{t}{t+1}
= \PE[\param]{\CPEdoup[\mu]{\param}{\varphi^{-\frac{\alpha}{1-\alpha}}(\chunk{Y}{t}{t+1})}{ X_{t:t+1} }}[\mu] \\
= \PE[\param]{
\PE[\param]{\left( \frac{\beta^2 \rme^{X_0} U^2 \vee 1}{\beta \rme^{\gamma X_0/2} |U|^\gamma} \right)^{\frac{\alpha}{1-\alpha}}}[X_t]
\PE[\param]{\left( \frac{\beta^2 \rme^{X_0} U^2 \vee 1}{\beta \rme^{\gamma X_0/2} |U|^\gamma} \right)^{\frac{\alpha}{1-\alpha}}}[X_{t+1}]
}[\mu] \leq D_3^2 \PE[\param]{\rme^{\delta |X_t|} \rme^{\delta |X_{t+1}|}}[\mu] \eqsp.
\end{multline*}
Using \eqref{eq:drift-stochastic-volatility}, we get, from the Cauchy-Schwarz inequality,
\begin{align*}
\PE[\param]{\rme^{\delta |X_t|} \rme^{\delta |X_{t+1}|}}[\mu] \leq \left(\PE[\param]{\rme^{2 \delta |X_t|}}[\mu]\PE[\param]{\rme^{2\delta |X_{t+1}|}}[\mu] \right)^{1/2} \eqsp,
\end{align*}
Applying \eqref{eq:nuit-de-sumatra} with $\delta$ replaced by $2\delta$ yields \eqref{eq:conditions-holder-2}.
Using \autoref{lem:holder-inequality} thus establishes \eqref{eq:moment:A4} and thereby,  \ref{assum:b-moment-bound} holds.

Provided that $\theta_T$ converges to $\tparam$ at a rate $1/\sqrt{T}$ (see \autoref{rem:kullback}),
we may therefore apply \autoref{prop:N-depend-on-T} which shows that for any $\gamma \in \ooint{0,1}$,
 $\{\mineps{T}{N_T}^{-1}(\param_T) \}_{T \geq 1}$ is tight with $N_T= T^{1/\gamma}$.

\section{Proof of \autoref{theo:doeblin-condition-PG}}
\label{sec:proof}

We will now turn to the proof of the minorization condition in \autoref{theo:doeblin-condition-PG}.
As in the statement of the theorem, we will not explicitly indicate any possible dependence
on unknown model parameters in the notation in this section.
This is done for notational convenience and is without loss of generality.
Throughout this section, $\PP{}$ and $\PE{}$ refer to probability and expectation, respectively,
\wrt\ the random variables generated by the \pg algorithm.
The proof is inductive and follows from a series of lemmas.

\begin{lemma}\label{lem:jensens}
  Let $X\geq0$ and $Y>0$ be independent random variables. Then,
  \begin{equation*}
    \PE{\frac{X}{Y}}\geq  \frac{ \PE{X}}{ \PE{Y}} \eqsp.
  \end{equation*}
\end{lemma}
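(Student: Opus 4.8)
The plan is to reduce the statement to the scalar Jensen inequality $\PE{1/Y}\geq 1/\PE{Y}$ by exploiting independence. First I would use the fact that $X\geq 0$ and $1/Y>0$ are nonnegative and independent, so by Tonelli's theorem the expectation of the product factorizes:
\[
\PE{\frac{X}{Y}} = \PE{X}\,\PE{\frac1Y}\eqsp.
\]
This step is valid without any integrability assumption because all integrands are nonnegative (both sides may a priori equal $+\infty$).

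Next I would apply Jensen's inequality to the convex function $\varphi\colon(0,\infty)\to(0,\infty)$, $\varphi(y)=1/y$. Since $Y>0$ almost surely, $\varphi(Y)$ is well defined and
\[
\PE{\frac1Y} = \PE{\varphi(Y)} \geq \varphi\bigl(\PE{Y}\bigr) = \frac{1}{\PE{Y}}\eqsp,
\]
with the convention that the right-hand side is $0$ when $\PE{Y}=+\infty$. Combining the two displays yields $\PE{X/Y}\geq \PE{X}/\PE{Y}$, which is the claim.

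There is no real obstacle here; the only point requiring a word of care is the treatment of the degenerate cases. If $\PE{Y}=\infty$ the right-hand side is $0\le\PE{X/Y}$ and there is nothing to prove; if $\PE{X}=\infty$ then, since $1/Y>0$ gives $\PE{1/Y}>0$, the factorization forces $\PE{X/Y}=\infty$ and the inequality again holds. Otherwise all quantities are finite and the chain of (in)equalities above is literal. I would state the lemma's proof in essentially this two-line form, noting that it is exactly the elementary ingredient needed to pass from the conditional-expectation bound in the sketch of \autoref{theo:doeblin-condition-PG} to the product form of $\mineps{T}{N}$.
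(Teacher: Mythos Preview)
Your proof is correct and takes essentially the same approach as the paper: factor the expectation using independence and then apply Jensen's inequality to the convex map $y\mapsto 1/y$. Your version is simply more explicit about the Tonelli step and the degenerate cases, whereas the paper compresses the argument to a single line.
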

\begin{proof}
  Since $f(y) = 1/y$ is convex on $y > 0$ the result follows by independence and Jensen's inequality.
\end{proof}

\begin{lemma}
\label{lem:singapore}
Let $f$ and $h$ be nonnegative measurable functions. For $t \in \{0,\dots,T-1\}$, we have
\begin{multline}
\label{eq:bound-fond-1}
\CPE{\frac{\sum_{i=1}^N \ewght{t+1}{i} f(\epart{0:t+1}{i})}{\sum_{i=1}^N \ewght{t+1}{i} h(\epart{t+1}{i})}}{\mcff{t}{N}} \\
\geq
\frac{\sum_{i=1}^N \ewght{t}{i} \int \Kun{Y_{t+1}}{\epart{t}{i}}{\rmd x_{t+1}} f(\epart{0:t}{i},x_{t+1})}
{\sum_{i=1}^N \ewght{t}{i} \left[ \frac{N-2}{N-1} \Kunf{Y_{t+1}} h(\epart{t}{i}) + \frac{2}{N-1} \sup_{(x,x')} \ewghtfunc{Y_{t+1}}{x}{x'} h(x') \right]}  \eqsp,
\end{multline}
and
\begin{equation}
\label{eq:bound-fond-2}
\PE{\frac{\sum_{i=0}^N \ewght{0}{i} f(\epart{0}{i})}{\sum_{i=0}^N \ewght{0}{i} h(\epart{0}{i})}}
\geq  \frac{(N-1)\Xinit{g(\cdot,Y_0) f(\cdot)}}{(N-2) \Xinit{g(\cdot,Y_0) h(\cdot)} +2 \sup_x [\ewghtfuncz{Y_0}{x} h(x)]} \eqsp.
\end{equation}
\end{lemma}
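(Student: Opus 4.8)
The plan is to establish both inequalities by the same two-step device: enlarge the denominator of the ratio just enough that, given the particle history, its remaining randomness becomes independent of the numerator term under scrutiny, then argue conditionally as in Lemma~\ref{lem:jensens}, and finally use the exchangeability of the $N-1$ non-reference particles to collapse the resulting sum. Throughout, write $\bar w_{t+1}\eqdef\sup_{(x,x')}\ewghtfunc{Y_{t+1}}{x}{x'}h(x')$ and $\Omega_t\eqdef\sum_{\ell=1}^N\ewght{t}{\ell}$, and recall that, conditionally on $\mcff{t}{N}$, the pairs $\{(A_{t+1}^i,\epart{t+1}{i})\}_{i=1}^{N-1}$ are i.i.d.\ (by \eqref{eq:ancestor-t} and \eqref{eq:particle-t}), while $\epart{t+1}{N}=x_{t+1}'$ and $A_{t+1}^N=N$ are deterministic.

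Consider \eqref{eq:bound-fond-1}. Since $f\geq 0$, discarding the $N$th summand of the numerator gives
\[
\CPE{\frac{\sum_{i=1}^N\ewght{t+1}{i}f(\epart{0:t+1}{i})}{\sum_{i=1}^N\ewght{t+1}{i}h(\epart{t+1}{i})}}{\mcff{t}{N}}\geq\sum_{k=1}^{N-1}\CPE{\frac{\ewght{t+1}{k}f(\epart{0:t+1}{k})}{\sum_{i=1}^N\ewght{t+1}{i}h(\epart{t+1}{i})}}{\mcff{t}{N}}\eqsp.
\]
Fix $k\leq N-1$. Because $\ewght{t+1}{i}h(\epart{t+1}{i})=\ewghtfunc{Y_{t+1}}{\epart{t}{A_{t+1}^i}}{\epart{t+1}{i}}\,h(\epart{t+1}{i})\leq\bar w_{t+1}$ for every $i$, we replace both the $k$th summand and the reference summand ($i=N$) in the denominator by $\bar w_{t+1}$, which enlarges it and hence lowers the ratio:
\[
\frac{\ewght{t+1}{k}f(\epart{0:t+1}{k})}{\sum_{i=1}^N\ewght{t+1}{i}h(\epart{t+1}{i})}\geq\frac{\ewght{t+1}{k}f(\epart{0:t+1}{k})}{2\bar w_{t+1}+\sum_{i\in\{1,\dots,N-1\}\setminus\{k\}}\ewght{t+1}{i}h(\epart{t+1}{i})}\eqsp.
\]
Given $\mcff{t}{N}$, the numerator is a function of $(A_{t+1}^k,\epart{t+1}{k})$ alone and the enlarged denominator a function of $\{(A_{t+1}^i,\epart{t+1}{i}):i\in\{1,\dots,N-1\}\setminus\{k\}\}$ alone; these are conditionally independent, so arguing conditionally as in Lemma~\ref{lem:jensens} bounds the conditional expectation of this ratio below by $\CPE{\ewght{t+1}{k}f(\epart{0:t+1}{k})}{\mcff{t}{N}}\big/\bigl(2\bar w_{t+1}+\sum_{i\neq k}\CPE{\ewght{t+1}{i}h(\epart{t+1}{i})}{\mcff{t}{N}}\bigr)$.

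It remains to evaluate these inner conditional expectations. Averaging over the ancestor index \eqref{eq:ancestor-t} and the proposal draw \eqref{eq:particle-t}, and using that $\ewghtfunc{Y_{t+1}}{x}{\cdot}$ is the Radon--Nikodym derivative of $\Kun{Y_{t+1}}{x}{\cdot}$ with respect to $\Kis{Y_{t+1}}{x}{\cdot}$, so that $\Kis{Y_{t+1}}{x}{\rmd x'}\ewghtfunc{Y_{t+1}}{x}{x'}=\Kun{Y_{t+1}}{x}{\rmd x'}$, one obtains for every $k\leq N-1$
\[
\CPE{\ewght{t+1}{k}f(\epart{0:t+1}{k})}{\mcff{t}{N}}=\frac{1}{\Omega_t}\sum_{j=1}^N\ewght{t}{j}\int\Kun{Y_{t+1}}{\epart{t}{j}}{\rmd x_{t+1}}f(\epart{0:t}{j},x_{t+1})\eqsp,
\]
and, with $h(\cdot)$ in place of $f(\epart{0:t}{j},\cdot)$, $\CPE{\ewght{t+1}{i}h(\epart{t+1}{i})}{\mcff{t}{N}}=\Omega_t^{-1}\sum_{j=1}^N\ewght{t}{j}\,\Kunf{Y_{t+1}}h(\epart{t}{j})$ for every $i\leq N-1$. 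Neither expression depends on the index, so the denominator sum over $i\in\{1,\dots,N-1\}\setminus\{k\}$ consists of $N-2$ identical terms; summing the per-$k$ bound over $k=1,\dots,N-1$ and multiplying numerator and denominator by $\Omega_t/(N-1)$ (using $\Omega_t\bar w_{t+1}=\sum_j\ewght{t}{j}\bar w_{t+1}$) produces exactly \eqref{eq:bound-fond-1}. The base case \eqref{eq:bound-fond-2} follows by the same scheme, unconditionally: at time $0$ the $\epart{0}{i}$, $i\leq N-1$, are i.i.d.\ with law $\Xinitisf{Y_0}$ and $\epart{0}{N}=x_0'$; since $\ewght{0}{k}=g(\epart{0}{k},Y_0)\frac{\rmd\Xinitv}{\rmd\Xinitisf{Y_0}}(\epart{0}{k})$ one has $\PE{\ewght{0}{k}f(\epart{0}{k})}=\Xinit{g(\cdot,Y_0)f(\cdot)}$; replacing the $k$th and the reference summands of the denominator by $\sup_x[\ewghtfuncz{Y_0}{x}h(x)]$, applying Lemma~\ref{lem:jensens}, and summing over $k\leq N-1$ yields the claim.

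The one delicate point --- and the source of the constant $2$ in the factor $\tfrac{2}{N-1}$ --- is that the two ``bad'' summands of the denominator, namely the one carrying the current index $k$ and the one carrying the deterministic reference index $N$, must \emph{both} be bounded by $\bar w_{t+1}$ \emph{before} the conditional expectation is taken; only then does the denominator involve a set of particle indices disjoint from $\{k\}$, which is what licenses the conditional use of Lemma~\ref{lem:jensens}. Degenerate situations (a vanishing denominator, or $\bar w_{t+1}\in\{0,\infty\}$) render the stated inequality trivial and are handled at the outset.
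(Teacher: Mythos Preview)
Your proof is correct and follows essentially the same approach as the paper's: drop the reference term from the numerator, replace two terms of the denominator (the reference and the ``active'' index) by the uniform bound $\bar w_{t+1}$, invoke Lemma~\ref{lem:jensens} under conditional independence, and evaluate the resulting conditional expectations via the Radon--Nikodym identity $\Kis{Y_{t+1}}{x}{\rmd x'}\,\ewghtfunc{Y_{t+1}}{x}{x'}=\Kun{Y_{t+1}}{x}{\rmd x'}$. The only cosmetic difference is that the paper first uses exchangeability of $\{(\epart{t+1}{i},\ewght{t+1}{i})\}_{i=1}^{N-1}$ to collapse the numerator sum to $(N-1)$ times the $k=1$ term \emph{before} enlarging the denominator, whereas you keep the sum over $k$, enlarge the denominator term-by-term, and observe at the end that all $N-1$ bounds coincide; the two orderings are equivalent.
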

\begin{proof}
Using that
$$
\ewght{t+1}{1} h(\epart{t+1}{1})+\ewght{t+1}{N} h(\epart{t+1}{N}) \leq 2 \sup_{(x,x')} \ewghtfunc{Y_{t+1}}{x}{x'} h(x') \eqsp,
$$
and that the weighted particles $\{( \epart{t+1}{i}, \ewght{t+1}{i}) \}_{i=1}^{N-1}$ are conditionally \iid\ \wrt\ $\mcff{t}{N}$, we get
\begin{align}
\nonumber
&\CPE{\frac{\sum_{i=1}^N \ewght{t+1}{i} f(\epart{0:t+1}{i})}{\sum_{i=1}^N \ewght{t+1}{i} h(\epart{t+1}{i})}}{\mcff{t}{N}}
\geq \CPE{\frac{\sum_{i=1}^{N-1} \ewght{t+1}{i} f(\epart{0:t+1}{i})}{\sum_{i=1}^N \ewght{t+1}{i} h(\epart{t+1}{i})}}{\mcff{t}{N}} \\  \nonumber
&\qquad\geq (N-1) \CPE{\frac{\ewght{t+1}{1} f(\epart{0:t+1}{1})}{\sum_{i=2}^{N-1} \ewght{t+1}{i} h(\epart{t+1}{i}) + 2 \sup_{(x,x')} \ewghtfunc{Y_{t+1}}{x}{x'} h(x')}}{\mcff{t}{N}} \\
 \label{eq:lower-bound}
&\qquad\geq (N-1) \frac{\CPE{\ewght{t+1}{1} f(\epart{0:t+1}{1})}{\mcff{t}{N}}}
{\CPE{\sum_{i=2}^{N-1} \ewght{t+1}{i} h(\epart{t+1}{i})}{\mcff{t}{N}}+ 2 \sup_{(x,x')} \ewghtfunc{Y_{t+1}}{x}{x'} h(x')} \eqsp,
\end{align}
where the last inequality follows from \autoref{lem:jensens}. Consider first the numerator in the \rhs\ of \eqref{eq:lower-bound}. We have
\begin{align}
  \nonumber
\CPE{\ewght{t+1}{1} f(\epart{0:t+1}{1})}{\mcff{t}{N}} 
&=  \frac{1}{ \sum_{l=1}^N \ewght{t}{l}  } \sum_{j=1}^N \ewght{t}{j} \int \Kis{Y_{t+1}}{\epart{t}{j}}{\rmd x_{t+1}} \ewghtfunc{Y_{t+1}}{\epart{t}{j}}{x_{t+1}} f(\epart{0:t}{j},x_{t+1}) \\ \label{eq:condexp}
&= \frac{1}{ \sum_{l=1}^N \ewght{t}{l}  } \sum_{j=1}^N \ewght{t}{j} \int \Kun{Y_{t+1}}{\epart{t}{j}}{\rmd x_{t+1}}  f(\epart{0:t}{j},x_{t+1}) \eqsp.
\end{align}
We now consider the denominator  in the \rhs\ of \eqref{eq:lower-bound}:
\begin{align*}
\CPE{\sum_{i=2}^{N-1} \ewght{t+1}{i} h(\epart{t+1}{i})}{\mcff{t}{N}} &= (N-2) \CPE{\ewght{t+1}{1} h(\epart{t+1}{1})}{\mcff{t}{N}} \\
&= (N-2) \frac{1}{ \sum_{l=1}^N \ewght{t}{l}  } \sum_{j=1}^N \ewght{t}{j} \Kunf{Y_{t+1}} h(\epart{t}{j}) \eqsp,
\end{align*}
where the last identity follows from \eqref{eq:condexp} with $f(\chunk{x}{0}{t+1})= h(x_{t+1})$. The proof of \eqref{eq:bound-fond-1} follows.

Consider now \eqref{eq:bound-fond-2}. Since the particles $\{\epart{0}{i}\}_{i=1}^{N-1}$ are \iid, we obtain, using again \autoref{lem:jensens},
\begin{align*}
\PE{\frac{\sum_{i=1}^N \ewght{0}{i} f(\epart{0}{i})}{\sum_{i=1}^N \ewght{0}{i} h(\epart{0}{i})}}
&\geq (N-1) \PE{\frac{\ewght{0}{1} f(\epart{0}{1})}{\sum_{i=2}^{N-1} \ewght{0}{i} h(\epart{0}{i}) + 2 \sup_x \ewghtfuncz{Y_0}{x} h(x)}}\\
&\geq \frac{(N-1) \PE{\ewght{0}{1} f(\epart{0}{1})}}{\PE{\sum_{i=2}^{N-1} \ewght{0}{i} h(\epart{0}{i})} + 2 \sup_x \ewghtfuncz{Y_0}{x} h(x)} \eqsp.
\end{align*}
The numerator is given by
\[
\PE{\ewght{0}{1} f(\epart{0}{1})}= \int \Xinitis{Y_0}{\rmd x_0} \ewghtfuncz{Y_0}{x_0} f(x_0) = \int \Xinit{\rmd x_0} g(x_0,Y_0) f(x_0) \eqsp.
\]
Similarly, we get
\begin{align*}
\PE{\sum_{i=2}^{N-1} \ewght{0}{i} h(\epart{0}{i})} = (N-2) \PE{\ewght{0}{1} h(\epart{0}{1})} 
=(N-2) \int \Xinit{\rmd x_0}  g(x_0,Y_0) h(x_0) \eqsp.
\end{align*}

\end{proof}

Define a sequence of nonnegative scalars $\{\beta_t\}_{t=0}^T$ by the backward recursion: $\beta_T=\supnorm{\ewghtfuncf{Y_T}}$, and for $t = T-1,T-2,\dots,0$,
\begin{multline}
\label{eq:def-M-t}
\beta_t = \supnorm{\ewghtfuncf{Y_t}} \left\{ \frac{2}{N-1} \sum_{\ell=1}^{T-t} \left( \frac{N-2}{N-1}\right)^{\ell-1} \beta_{t+\ell} \, \supnorm{\Kunf{\chunk{Y}{t+1}{t+\ell-1}}\bigone} \right. \\
\left. + \left( \frac{N-2}{N-1} \right)^{T-t} \supnorm{\Kunf{\chunk{Y}{t+1}{T}} \bigone} \right\}  \eqsp.
\end{multline}
Given  $\{\beta_t\}_{t=0}^T$, define the functions $\{ h_t \}_{t=0}^T$, $h_t: \Xset \to \rset_+$, by the backward recursion: $h_T = \bigone$ and, for all $t= T-1, T-2,\dots,0$,
\begin{equation}
\label{eq:recursive-def-h-t}
h_t: x \mapsto h_t(x) = \frac{N-2}{N-1} \; \Kunf{Y_{t+1}} h_{t+1}(x) + \frac{2}{N-1} \beta_{t+1} \eqsp.
\end{equation}
By solving the backward recursion, \eqref{eq:recursive-def-h-t} implies
\begin{multline}
\label{eq:def-h-t}
h_t(x) =  \frac{2}{N-1} \sum_{\ell=1}^{T-t} \left( \frac{N-2}{N-1} \right)^{\ell-1} \beta_{t+\ell} \Kunf{\chunk{Y}{t+1}{t+\ell-1}} \bigone(x) 
+
\left( \frac{N-2}{N-1} \right)^{T-t} \Kunf{\chunk{Y}{t+1}{T}}\bigone(x) \eqsp.
\end{multline}
For $D \in \Xsigma^{\otimes (T+1)}$, set $f_T(\chunk{x}{0}{T})= \1_D(\chunk{x}{0}{T})$ and, for $t = T-1, T-2, \dots, 0$,
\begin{equation}
\label{eq:recursive-def-f-t}
f_t(\chunk{x}{0}{t})= \int \Kun{Y_{t+1}}{x_t}{\rmd x_{t+1}} f_{t+1}(\chunk{x}{0}{t+1}) \eqsp,
\end{equation}
or equivalently,
\begin{equation}
\label{eq:def-f-t}
f_t(\chunk{x}{0}{t})= \int \prod_{\ell=1}^{T-t} \Kun{Y_{t+\ell}}{x_{t+\ell-1}}{\rmd x_{t+\ell}} \1_D(\chunk{x}{0}{T}) \eqsp.
\end{equation}

\begin{lemma}
For any $D \in \Xsigma^{\otimes (T+1)}$,
\begin{equation*}
\PE{\frac{\sum_{i=1}^N \ewght{T}{i} \1_D(\epart{0:T}{i})}{\sum_{i=1}^N \ewght{T}{i}}} \geq  \frac{(N-1)\dens[\Xinitv]{\chunk{Y}{0}{T}}}{(N-2)\Xinit{g(\cdot,Y_0) h_0(\cdot)}+2\beta_0} \post{\Xinitv,0:T}{\chunk{Y}{0}{T}}(D) \eqsp.
\end{equation*}
\end{lemma}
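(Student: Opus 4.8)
The plan is to read the left-hand side as $\PE{U_T}$, where for $t=0,\dots,T$ we set
\[
U_t \eqdef \frac{\sum_{i=1}^N \ewght{t}{i}\, f_t(\epart{0:t}{i})}{\sum_{i=1}^N \ewght{t}{i}\, h_t(\epart{t}{i})}\eqsp,
\]
with $\{f_t\}$, $\{h_t\}$, $\{\beta_t\}$ the sequences defined by the backward recursions \eqref{eq:def-M-t}, \eqref{eq:recursive-def-h-t} and \eqref{eq:recursive-def-f-t}. Since $f_T=\1_D$ and $h_T=\bigone$, the random variable $U_T$ is exactly the quantity inside the expectation in the statement. I would then establish, for each $t\in\{0,\dots,T-1\}$, the one-step bound $\PE{U_{t+1}}\geq\PE{U_t}$, prove a direct lower bound on $\PE{U_0}$, and conclude by chaining $\PE{U_T}\geq\PE{U_{T-1}}\geq\cdots\geq\PE{U_0}$ with that bound; the degenerate case $T=0$ reduces to the base step alone.

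For the one-step bound I would apply \eqref{eq:bound-fond-1} of \autoref{lem:singapore} with $f=f_{t+1}$ and $h=h_{t+1}$, noting that its left-hand side is precisely $\CPE{U_{t+1}}{\mcff{t}{N}}$. By \eqref{eq:recursive-def-f-t} the numerator of the resulting bound, $\sum_{i=1}^N\ewght{t}{i}\int\Kun{Y_{t+1}}{\epart{t}{i}}{\rmd x_{t+1}}\,f_{t+1}(\epart{0:t}{i},x_{t+1})$, equals $\sum_{i=1}^N\ewght{t}{i}\,f_t(\epart{0:t}{i})$. For the denominator the crucial inequality is
\[
\sup_{(x,x')}\ewghtfunc{Y_{t+1}}{x}{x'}\,h_{t+1}(x')\ \leq\ \supnorm{\ewghtfuncf{Y_{t+1}}}\,\supnorm{h_{t+1}}\ \leq\ \beta_{t+1}\eqsp,
\]
the second step following by comparing the explicit solution \eqref{eq:def-h-t} of the recursion for $h_{t+1}$ with the explicit solution \eqref{eq:def-M-t} of the recursion for $\beta_{t+1}$: the former is obtained from the latter by replacing each sup-norm $\supnorm{\Kunf{\cdot}\bigone}$ with the corresponding pointwise value $\Kunf{\cdot}\bigone(x)$ and dropping the overall prefactor $\supnorm{\ewghtfuncf{Y_{t+1}}}$. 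Combined with \eqref{eq:recursive-def-h-t} this gives, for each $i$, the pointwise bound $\tfrac{N-2}{N-1}\Kunf{Y_{t+1}}h_{t+1}(\epart{t}{i})+\tfrac{2}{N-1}\sup_{(x,x')}\ewghtfunc{Y_{t+1}}{x}{x'}h_{t+1}(x')\leq h_t(\epart{t}{i})$, hence $\CPE{U_{t+1}}{\mcff{t}{N}}\geq U_t$; taking expectations and using the tower property yields $\PE{U_{t+1}}\geq\PE{U_t}$.

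For the base case I would apply \eqref{eq:bound-fond-2} with $f=f_0$ and $h=h_0$. Its numerator $(N-1)\Xinit{g(\cdot,Y_0)f_0(\cdot)}$ equals $(N-1)\dens[\Xinitv]{\chunk{Y}{0}{T}}\post{\Xinitv,0:T}{\chunk{Y}{0}{T}}(D)$: by \eqref{eq:def-f-t} and the definitions \eqref{eq:def-Kun}--\eqref{eq:def-chunk-Kun} of $\Kunf{\cdot}$,
\[
\int\Xinitv(\rmd x_0)\,g(x_0,Y_0)\,f_0(x_0)=\int\Xinitv(\rmd x_0)\,g(x_0,Y_0)\prod_{s=1}^T M(x_{s-1},\rmd x_s)\,g(x_s,Y_s)\,\1_D(x_{0:T})\eqsp,
\]
which is $\dens[\Xinitv]{\chunk{Y}{0}{T}}\post{\Xinitv,0:T}{\chunk{Y}{0}{T}}(D)$ by \eqref{eq:jsd-def}. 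For the denominator, $\sup_x[\ewghtfuncz{Y_0}{x}h_0(x)]\leq\supnorm{\ewghtfuncf{Y_0}}\supnorm{h_0}\leq\beta_0$ by the same comparison with $t=0$, so enlarging $2\sup_x[\ewghtfuncz{Y_0}{x}h_0(x)]$ to $2\beta_0$ only weakens the bound and produces exactly the denominator $(N-2)\Xinit{g(\cdot,Y_0)h_0(\cdot)}+2\beta_0$ of the claim.

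I expect the only genuinely delicate point to be the bookkeeping that links the denominators coming out of \eqref{eq:bound-fond-1}--\eqref{eq:bound-fond-2} back to $h_t$ and $\beta_t$, i.e.\ verifying $\supnorm{\ewghtfuncf{Y_{t+1}}}\supnorm{h_{t+1}}\leq\beta_{t+1}$ from the closed forms of the two backward recursions while correctly handling the boundary conventions (such as $\supnorm{\Kunf{\chunk{Y}{t+1}{t}}\bigone}=1$ and the base values $h_T=\bigone$, $\beta_T=\supnorm{\ewghtfuncf{Y_T}}$). Everything else is substitution into \eqref{eq:bound-fond-1}, the recursions \eqref{eq:recursive-def-f-t}--\eqref{eq:recursive-def-h-t}, and monotonicity of conditional expectation.
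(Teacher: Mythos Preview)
Your proposal is correct and follows essentially the same route as the paper: define the ratio $U_t$, use \autoref{lem:singapore}-\eqref{eq:bound-fond-1} together with the recursions for $f_t$ and $h_t$ and the comparison $\supnorm{\ewghtfuncf{Y_{t+1}}}\supnorm{h_{t+1}}\leq\beta_{t+1}$ to obtain the one-step monotonicity, then close with \eqref{eq:bound-fond-2} and the identification $\Xinit{g(\cdot,Y_0)f_0(\cdot)}=\dens[\Xinitv]{\chunk{Y}{0}{T}}\post{\Xinitv,0:T}{\chunk{Y}{0}{T}}(D)$. The only cosmetic difference is that you state the stronger conditional inequality $\CPE{U_{t+1}}{\mcff{t}{N}}\geq U_t$ before taking expectations, whereas the paper writes the argument directly at the level of expectations via the tower property; the content is identical.
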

\begin{proof}
Note first that, by construction,
\begin{equation}
\label{eq:kuala-lumpur}
\PE{\frac{\sum_{i=1}^N \ewght{T}{i} \1_D(\epart{0:T}{i})}{\sum_{i=1}^N \ewght{T}{i}}} =
\PE{\frac{\sum_{i=1}^N \ewght{T}{i} f_T(\epart{0:T}{i})}{\sum_{i=1}^N \ewght{T}{i}  h_T(\epart{T}{i})}}
\eqsp.
\end{equation}
We now show that, by backward induction, for all $t \in \{0,\dots,T-1\}$,
\begin{equation}
\label{eq:malaisie}
\PE{\frac{\sum_{i=1}^N \ewght{t+1}{i} f_{t+1}(\epart{0:t+1}{i})}{\sum_{i=1}^N \ewght{t+1}{i} h_{t+1}(\epart{t+1}{i})}}
\geq
\PE{\frac{\sum_{i=1}^N \ewght{t}{i} f_t(\epart{0:t}{i})}{\sum_{i=1}^N \ewght{t}{i} h_t(\epart{t}{i})}} \eqsp.
\end{equation}
To obtain \eqref{eq:malaisie}, note first that the tower property of the conditional expectation, \autoref{lem:singapore}, and
\eqref{eq:recursive-def-f-t} imply
\begin{align*}
&\PE{\frac{\sum_{i=1}^N \ewght{t+1}{i} f_{t+1}(\epart{0:t+1}{i})}{\sum_{i=1}^N \ewght{t+1}{i} h_{t+1}(\epart{t+1}{i})}}
= \PE{\CPE{\frac{\sum_{i=1}^N \ewght{t+1}{i} f_{t+1}(\epart{0:t+1}{i})}{\sum_{i=1}^N \ewght{t+1}{i} h_{t+1}(\epart{t+1}{i})}}{\mcff{t}{N}}}\\
&\qquad\geq  \PE{\frac{\sum_{i=1}^N \ewght{t}{i} f_t(\epart{0:t}{i})}{\sum_{i=1}^N \ewght{t}{i} \left[ \frac{N-2}{N-1} \Kunf{Y_{t+1}} h_{t+1}(\epart{t}{i}) + \frac{2}{N-1} \sup_{(x,x')} \ewghtfunc{Y_{t+1}}{x}{x'} h_{t+1}(x') \right]}} \eqsp.
\end{align*}
By the triangle inequality, it follows directly from \eqref{eq:def-M-t} and \eqref{eq:def-h-t} that
\begin{align}
\label{eq:bound-beta0}
&\sup_x \ewghtfuncz{Y_0}{x} h_0(x) \leq \beta_0 \eqsp, \\
\label{eq:bound-betat}
&\sup_{x,x'} \ewghtfunc{Y_{t+1}}{x}{x'} h_{t+1}(x') \leq \beta_{t+1} \eqsp, \quad t \in \{0, \dots, T-1 \} \eqsp.
\end{align}
Combining the inequality \eqref{eq:bound-betat} with the definition of $h_t$ in \eqref{eq:recursive-def-h-t} yields
\begin{equation*}
\sum_{i=1}^N \ewght{t}{i} \left[ \frac{N-2}{N-1} \Kunf{Y_{t+1}} h_{t+1}(\epart{t}{i}) + \frac{2}{N-1} \sup_{(x,x')} \ewghtfunc{Y_{t+1}}{x}{x'} h_{t+1}(x') \right] 
\leq \sum_{i=1}^N \ewght{t}{i} h_t(\epart{t}{i}) \eqsp,
\end{equation*}
showing \eqref{eq:malaisie}. Combining \eqref{eq:malaisie} with \eqref{eq:kuala-lumpur} and using \autoref{lem:singapore}-\eqref{eq:bound-fond-2}
establishes that
\begin{align*}
\PE{\frac{\sum_{i=1}^N \ewght{T}{i} \1_D(\epart{0:T}{i})}{\sum_{i=1}^N \ewght{T}{i}}}
\geq  \PE{\frac{\sum_{i=1}^N \ewght{0}{i} f_0(\epart{0}{i})}{\sum_{i=1}^N \ewght{0}{i}  h_0(\epart{0}{i})}}
\geq  \frac{(N-1)\Xinit{g(\cdot,Y_0) f_0(\cdot)}}{(N-2)\Xinit{g(\cdot,Y_0) h_0(\cdot)}+2 \beta_0} \eqsp,
\end{align*}
where the last inequality stems from \eqref{eq:bound-beta0}. The proof is completed by noting that
\[
\Xinit{g(\cdot,Y_0) f_0(\cdot)} = \post{\Xinitv,0:T}{\chunk{Y}{0}{T}}(D) \dens[\Xinitv]{\chunk{Y}{0}{T}} \eqsp. \]
\end{proof}

Finally, to prove \autoref{theo:doeblin-condition-PG} it remains to show the following.

\begin{lemma}
\label{lem:bound-denominator}
With $B_{t,T}$ defined as in \eqref{eq:def-b-T}, it holds that
\begin{equation}
\label{eq:bound-denominator}
(N-2)\Xinit{g(\cdot,Y_0) h_0(\cdot)}+2\beta_0
\leq (N-1) \dens[\Xinitv]{\chunk{Y}{0}{T}} \left[\prod_{t=0}^T \frac{2 B_{t,T}+N-2}{N-1} \right]\eqsp.
\end{equation}
\end{lemma}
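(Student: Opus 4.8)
The plan is to reduce the statement to a telescoping inequality along a sequence of unnormalized measures, using the explicit solutions \eqref{eq:def-M-t}--\eqref{eq:def-h-t} of the backward recursions. First I would introduce, for $0\le t\le T$, the (unnormalized) measure $\nu_t$ on $(\Xset,\Xsigma)$ given by
\[
\nu_t(h) = \int \mu(\rmd x_0)\, g(x_0,Y_0)\, \prod_{s=1}^{t} \Kun{Y_s}{x_{s-1}}{\rmd x_s}\; h(x_t)
\]
for nonnegative measurable $h$, consistently with the convention used in the proof of the preceding lemma. By construction $\nu_0 = \Xinit{g(\cdot,Y_0)\,\cdot\,}$, $\nu_t = \nu_{t-1}\Kunf{Y_t}$, and a one-line telescoping identity gives $\nu_t(\bigone)=\dens[\mu]{\chunk{Y}{0}{t}}$ and, more generally, $\nu_t\bigl(\Kunf{\chunk{Y}{t+1}{t+j}}\bigone\bigr)=\dens[\mu]{\chunk{Y}{0}{t+j}}$ for every $j\ge 0$ (with $\Kunf{\chunk{Y}{t+1}{t}}\bigone=\bigone$). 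In particular $\nu_0(h_0)=\Xinit{g(\cdot,Y_0)h_0(\cdot)}$ is exactly the first term on the left of \eqref{eq:bound-denominator}.

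The one genuinely new step — and the place I expect the work to concentrate — is the pointwise bound
\[
\beta_t\,\dens[\mu]{\chunk{Y}{0}{t-1}} \;\le\; B_{t,T}\,\nu_t(h_t)\,,\qquad 0\le t\le T\,,
\]
with the convention $\dens[\mu]{\chunk{Y}{0}{-1}}=1$ (so that $\cdens[\mu]{\cdot}{\chunk{Y}{0}{-1}}=\dens[\mu]{\cdot}$, as in the paper). To prove it I would apply $\nu_t$ to the explicit formula \eqref{eq:def-h-t} for $h_t$ and use the identity above to get
\[
\nu_t(h_t) = \frac{2}{N-1}\sum_{\ell=1}^{T-t}\Bigl(\tfrac{N-2}{N-1}\Bigr)^{\ell-1}\beta_{t+\ell}\,\dens[\mu]{\chunk{Y}{0}{t+\ell-1}} + \Bigl(\tfrac{N-2}{N-1}\Bigr)^{T-t}\dens[\mu]{\chunk{Y}{0}{T}}\,.
\]
On the other hand, the definition \eqref{eq:def-b-T} of $B_{t,T}$ says precisely that $\supnorm{\ewghtfuncf{Y_t}}\supnorm{\Kunf{\chunk{Y}{t+1}{t+j}}\bigone}\le B_{t,T}\,\dens[\mu]{\chunk{Y}{0}{t+j}}/\dens[\mu]{\chunk{Y}{0}{t-1}}$ for all $0\le j\le T-t$. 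Substituting this (with $j=\ell-1$ inside the sum in \eqref{eq:def-M-t}, and $j=T-t$ for the last term) into the explicit expression \eqref{eq:def-M-t} for $\beta_t$ and matching term by term against $\nu_t(h_t)$ yields $\beta_t\le B_{t,T}\,\nu_t(h_t)/\dens[\mu]{\chunk{Y}{0}{t-1}}$, which is the claimed bound. The subtlety here is purely bookkeeping: the empty-product conventions and the one-index shift between the summation index $\ell$ in \eqref{eq:def-M-t}--\eqref{eq:def-h-t} and the index $j$ over which the supremum defining $B_{t,T}$ runs. (If $\dens[\mu]{\chunk{Y}{0}{T}}=0$ or some $B_{t,T}=\infty$ the conclusion of the lemma is immediate, so I may assume the relevant quantities finite and positive when dividing.)

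With the pointwise bound in hand, the rest is routine. Applying $\nu_{t-1}$ to the recursion \eqref{eq:recursive-def-h-t} and using $\nu_{t-1}\Kunf{Y_t}=\nu_t$ and $\nu_{t-1}(\bigone)=\dens[\mu]{\chunk{Y}{0}{t-1}}$ gives, for $1\le t\le T$,
\[
\nu_{t-1}(h_{t-1}) = \frac{N-2}{N-1}\,\nu_t(h_t) + \frac{2}{N-1}\,\beta_t\,\dens[\mu]{\chunk{Y}{0}{t-1}} \;\le\; \frac{2B_{t,T}+N-2}{N-1}\,\nu_t(h_t)\,;
\]
iterating from $t=1$ to $t=T$ and using $h_T=\bigone$, hence $\nu_T(h_T)=\dens[\mu]{\chunk{Y}{0}{T}}$, yields $\Xinit{g(\cdot,Y_0)h_0(\cdot)}=\nu_0(h_0)\le \dens[\mu]{\chunk{Y}{0}{T}}\prod_{t=1}^{T}(2B_{t,T}+N-2)/(N-1)$. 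Finally, invoking the pointwise bound once more at $t=0$, i.e.\ $\beta_0\le B_{0,T}\,\nu_0(h_0)$, I get
\[
(N-2)\Xinit{g(\cdot,Y_0)h_0(\cdot)} + 2\beta_0 \;\le\; (2B_{0,T}+N-2)\,\nu_0(h_0) \;\le\; (N-1)\,\dens[\mu]{\chunk{Y}{0}{T}}\prod_{t=0}^{T}\frac{2B_{t,T}+N-2}{N-1}\,,
\]
which is exactly \eqref{eq:bound-denominator}. In short, the whole proof hinges on introducing $\nu_t$ and establishing $\beta_t\,\dens[\mu]{\chunk{Y}{0}{t-1}}\le B_{t,T}\,\nu_t(h_t)$; everything else is a telescoping of the $h_t$-recursion.
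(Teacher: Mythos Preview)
Your proof is correct, and it is a somewhat different (and arguably cleaner) route than the paper's. Both arguments rest on the same core estimate: bounding each term $\supnorm{\ewghtfuncf{Y_t}}\supnorm{\Kunf{\chunk{Y}{t+1}{t+j}}\bigone}$ by $B_{t,T}\,\dens[\mu]{\chunk{Y}{0}{t+j}}/\dens[\mu]{\chunk{Y}{0}{t-1}}$ inside the explicit formula \eqref{eq:def-M-t} for $\beta_t$. In the paper this becomes the inequality $\alpha_t\le B_{t,T}\bigl\{\tfrac{2}{N-1}\sum_{\ell=1}^{T-t}(\tfrac{N-2}{N-1})^{\ell-1}\alpha_{t+\ell}+(\tfrac{N-2}{N-1})^{T-t}\bigr\}$ for $\alpha_t=\beta_t/\cdens[\mu]{\chunk{Y}{t}{T}}{\chunk{Y}{0}{t-1}}$; the paper then introduces an auxiliary sequence $\tilde\alpha_t$ satisfying the corresponding equality, proves $\alpha_t\le\tilde\alpha_t$ by backward induction, collapses the $\tilde\alpha$ recursion to the closed form $\tilde\alpha_0=B_{0,T}\prod_{t=1}^T(2B_{t,T}+N-2)/(N-1)$, and only then combines with $\Xinit{g(\cdot,Y_0)h_0(\cdot)}$. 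You bypass the auxiliary sequence entirely: by recognizing that the bracket in the paper's inequality is exactly $\nu_t(h_t)/\dens[\mu]{\chunk{Y}{0}{T}}$, you obtain a one-step multiplicative inequality $\nu_{t-1}(h_{t-1})\le\tfrac{2B_{t,T}+N-2}{N-1}\nu_t(h_t)$ directly from the $h_t$-recursion \eqref{eq:recursive-def-h-t}, which telescopes without further algebra. The paper's detour through $\tilde\alpha_t$ makes the product formula visible as an intermediate object, but your argument gets to \eqref{eq:bound-denominator} with fewer moving parts.
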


\begin{proof}
Define for $t \in \{0,\dots,T\}$,
\begin{equation}
\label{eq:def-Q-t}
\alpha_t= \frac{\beta_t}{\cdens[\Xinitv]{\chunk{Y}{t}{T}}{\chunk{Y}{0}{t-1}}} \eqsp,
\end{equation}
with the convention $\cdens[\Xinitv]{\chunk{Y}{0}{T}}{\chunk{Y}{0}{ -1 }}=\dens[\Xinitv]{\chunk{Y}{0}{T}}$. In particular,
$\alpha_0= \beta_0/\dens[\Xinitv]{\chunk{Y}{0}{T}}$.

Eq.~\eqref{eq:def-M-t} implies
\begin{multline*}
\alpha_t = \supnorm{\ewghtfuncf{Y_t}}
\left\{ \frac{2}{N-1} \sum_{\ell=1}^{T-t} \left( \frac{N-2}{N-1} \right)^{\ell-1} \alpha_{t+\ell} \left[ \frac{\supnorm{\Kunf{\chunk{Y}{t+1}{t+\ell-1}}\bigone} \cdens[\Xinitv]{\chunk{Y}{t+\ell}{T}}{\chunk{Y}{0}{t+\ell-1}}}{\cdens[\Xinitv]{\chunk{Y}{t}{T}}{\chunk{Y}{0}{t-1}}} \right] \right. \\
\left. + \left( \frac{N-2}{N-1} \right)^{T-t} \frac{\supnorm{\Kunf{\chunk{Y}{t+1}{T}}\bigone}}{\cdens[\Xinitv]{\chunk{Y}{t}{T}}{\chunk{Y}{0}{t-1}}} \right\} \eqsp.
\end{multline*}
The identity
\[
\frac{\cdens[\Xinitv]{\chunk{Y}{t+\ell}{T}}{\chunk{Y}{0}{t+\ell-1}}}{\cdens[\Xinitv]{\chunk{Y}{t}{T}}{\chunk{Y}{0}{t-1}}}= \frac{1}{\cdens[\Xinitv]{\chunk{Y}{t}{t+\ell-1}}{\chunk{Y}{0}{t-1}}}
\]
and the definition in \eqref{eq:def-b-T} imply that
\begin{equation}
\label{eq:ineq-alpha}
\alpha_t \leq B_{t,T} \left\{ \frac{2}{N-1} \sum_{\ell=1}^{T-t} \left( \frac{N-2}{N-1} \right)^{\ell-1} \alpha_{t+\ell} + \left( \frac{N-2}{N-1} \right)^{T-t} \right\} \eqsp.
\end{equation}
By a backward induction, define the sequence $\{\tilde{\alpha}_t \}_{t=0}^T$ as follows: set $\tilde{\alpha}_T = B_{T,T}$ and
\begin{equation}
\label{eq:def-tilde-alpha}
\tilde{\alpha}_t= B_{t,T} \left[ \frac{2}{N-1} \sum_{\ell=1}^{T-t} \left( \frac{N-2}{N-1} \right)^{\ell-1} \tilde{\alpha}_{t+\ell} + \left( \frac{N-2}{N-1} \right)^{T-t} \right] \eqsp.
\end{equation}

Since by construction, $\alpha_T \leq B_{T,T} = \tilde{\alpha}_T$, an elementary backward recursion using \eqref{eq:ineq-alpha} shows that,
\begin{equation}
\label{eq:tilde-alpha-bound-alpha}
\text{for all $t\in \{0,\dots,T\}$, $\alpha_t \leq \tilde{\alpha}_t$} \eqsp.
\end{equation}
However
\begin{align*}
\tilde{\alpha}_{t-1} &= B_{t-1,T} \left[ \frac{2}{N-1} \sum_{s=1}^{T-t+1} \left( \frac{N-2}{N-1} \right)^{\ell-1} \tilde{\alpha}_{t+\ell-1} + \left( \frac{N-2}{N-1} \right)^{T-t+1}  \right]\\
&= B_{t-1,T} \left[ \frac{2}{N-1} \tilde{\alpha}_t + \frac{2}{N-1} \sum_{k=1}^{T-t} \left( \frac{N-2}{N-1} \right)^{k} \tilde{\alpha}_{t+k} + \left( \frac{N-2}{N-1} \right)^{T-t+1}  \right] \\
& = \frac{2 B_{t-1,T}}{N-1} \tilde{\alpha}_t + B_{t-1,T} \frac{N-2}{N-1} \frac{\tilde{\alpha}_t}{B_{t,T}} \\
& = \frac{B_{t-1,T}}{B_{t,T}} \left[ \frac{2 B_{t,T}}{N-1} + \frac{N-2}{N-1} \right] \tilde{\alpha}_t \eqsp.
\end{align*}
Therefore
\begin{equation}
\label{eq:tilde-alpha-value}
\tilde{\alpha}_0 = B_{0,T} \prod_{t=1}^{T} \frac{2 B_{t,T} + N -2}{N-1} \eqsp.
\end{equation}
Now, since
\begin{equation*}
h_0(x) =  \frac{2}{N-1} \sum_{s=1}^{T} \left( \frac{N-2}{N-1} \right)^{s-1} \beta_{s} \Kunf{\chunk{Y}{1}{s-1}} \bigone(x) \\
+
\left( \frac{N-2}{N-1} \right)^{T} \Kunf{\chunk{Y}{1}{T}}\bigone(x) \eqsp,
\end{equation*}
we have
\begin{equation*}
\Xinit{g(\cdot,Y_0) h_0(\cdot)}=  \frac{2}{N-1} \sum_{s=1}^{T} \left( \frac{N-2}{N-1} \right)^{s-1} \beta_{s} \dens[\Xinitv]{\chunk{Y}{0}{s-1}}
+ \left( \frac{N-2}{N-1} \right)^{T} \dens[\Xinitv]{\chunk{Y}{0}{T}} \eqsp.
\end{equation*}
Plugging \eqref{eq:def-Q-t} into this equation and using that
$$
\dens[\Xinitv]{\chunk{Y}{0}{s-1}}
=\frac{  \dens[\Xinitv]{\chunk{Y}{0}{T}}  }{ \cdens[\Xinitv]{\chunk{Y}{s}{T}}{\chunk{Y}{0}{s-1}} }
$$
yield
\begin{align*}
\Xinit{g(\cdot,Y_0) h_0(\cdot)}=  \frac{2}{N-1} \sum_{s=1}^{T} \left( \frac{N-2}{N-1} \right)^{s-1} \alpha_{s}
\dens[\Xinitv]{\chunk{Y}{0}{T}}
+
\left( \frac{N-2}{N-1} \right)^{T} \dens[\Xinitv]{\chunk{Y}{0}{T}} \eqsp.
\end{align*}
Finally, using \eqref{eq:tilde-alpha-bound-alpha} and then \eqref{eq:def-tilde-alpha},
\begin{align*}
&(N-2)\Xinit{g(\cdot,Y_0) h_0(\cdot)}+2 \beta_0\\
&\quad \leq\dens[\Xinitv]{\chunk{Y}{0}{T}}  \left\{ (N-2)\left[\frac{2}{N-1} \sum_{s=1}^{T} \left( \frac{N-2}{N-1} \right)^{s-1} \alpha_{s}+\left(\frac{N-2}{N-1} \right)^{T}\right]  +2 \alpha_0\right\}\\
&\quad \leq \dens[\Xinitv]{\chunk{Y}{0}{T}}  \left\{ (N-2)\left[\frac{2}{N-1} \sum_{s=1}^{T} \left( \frac{N-2}{N-1} \right)^{s-1} \tilde{\alpha}_{s}+\left(\frac{N-2}{N-1} \right)^{T}\right]  +2 \tilde{\alpha}_0\right\}\\
&\quad \leq \dens[\Xinitv]{\chunk{Y}{0}{T}} \left((N-2)  \frac{\tilde{\alpha}_0}{B_{0,T}}+2 \tilde{\alpha}_0 \right)\\
&\quad =\dens[\Xinitv]{\chunk{Y}{0}{T}}   (N-2+2 B_{0,T})\prod_{t=1}^{T} \frac{2 B_{t,T} + N -2}{N-1} \eqsp,
\end{align*}
where the last equality follows from \eqref{eq:tilde-alpha-value}. The proof follows.
\end{proof}

\section{Proof of \autoref{prop:N-depend-on-T}}
\label{sec:proof:prop:N-depend-on-T}

Define
\begin{align}
\label{eq:def-hat-B-nu-0}
&\Bhat[t]{t+\ell}{\mu}{\param}\eqdef
\frac{\supnorm{\ewghtfuncf[\param]{Y_t}} \supnorm{\Kunf[\param]{\chunk{Y}{t+1}{t+\ell}} \bigone}}{\cdens[\mu]{\chunk{Y}{t}{t+\ell}}{\chunk{Y}{0}{t-1}}[\param]}
\eqsp, \\
&\Chat[t]{t+\ell}{\mu}{\param} \eqdef \frac{\supnorm{\ewghtfuncf[\param]{Y_t}} \int \lambda(\rmd x_{t+1}) g^\param(x_{t+1},Y_{t+1}) \Kunf[\param]{\chunk{Y}{t+2}{t+\ell}} \bigone(x_{t+1})}{\cdens[\mu]{\chunk{Y}{t}{t+\ell}}{\chunk{Y}{0}{t-1}}[\param]}
\eqsp. \label{eq:def-check-b-t-ell}
\end{align}
Note that
\begin{align}
\Bhat[t]{t+\ell}{\mu}{\param} &= \Bbar[t]{t+\ell}{\mu}{\param} \frac{ \dens[\mu,t]{\chunk{Y}{t}{t+\ell}}[\param] }{ \cdens[\mu]{\chunk{Y}{t}{t+\ell}}{\chunk{Y}{0}{t-1}}[\param] }
&&\text{and} &
\Chat[t]{t+\ell}{\mu}{\param} &= \Cbar[t]{t+\ell}{\mu}{\param} \frac{ \dens[\mu,t]{\chunk{Y}{t}{t+\ell}}[\param] }{ \cdens[\mu]{\chunk{Y}{t}{t+\ell}}{\chunk{Y}{0}{t-1}}[\param] }
\eqsp,
\label{eq:rel-bar-B-hat-B}
\end{align}
where $\Bbar[t]{t+\ell}{\mu}{\param}$ and $\Cbar[t]{t+\ell}{\mu}{\param}$ are defined in \eqref{eq:def-bar-b-t-ell} and
\eqref{eq:def-bar-c-t-ell}, respectively.
\begin{lemma} \label{lem:tilde-B-martingale}
For all $\param \in \Param$, the sequence $\{\Chat[t]{t+\ell}{\mu}{\param}\}_{\ell \geq 0}$ defined in \eqref{eq:def-check-b-t-ell} is a $(\PP^\param_\mu,\{\mcf_{t+\ell}\}_{\ell \geq 0})$-martingale, where $\mcf_{t}=\sigma(\chunk{Y}{0}{t})$.
\end{lemma}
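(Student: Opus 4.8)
The plan is to verify directly the two defining properties of a martingale. Adaptedness is immediate: by \eqref{eq:def-check-b-t-ell} the numerator of $\Chat[t]{t+\ell}{\mu}{\param}$ depends only on $\chunk{Y}{t}{t+\ell}$, and its denominator $\cdens[\mu]{\chunk{Y}{t}{t+\ell}}{\chunk{Y}{0}{t-1}}[\param]=\dens[\mu]{\chunk{Y}{0}{t+\ell}}[\param]/\dens[\mu]{\chunk{Y}{0}{t-1}}[\param]$ only on $\chunk{Y}{0}{t+\ell}$, so $\Chat[t]{t+\ell}{\mu}{\param}$ is $\mcf_{t+\ell}$-measurable (and finite $\PP^\param_\mu$-a.s.\ under \ref{assum:m-g-positive}). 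For the identity $\cesp[\mu][\param]{\Chat[t]{t+\ell+1}{\mu}{\param}}{\mcf_{t+\ell}}=\Chat[t]{t+\ell}{\mu}{\param}$ (for every $\ell\geq1$; if $\ell=0$ is to be included in the indexing one simply sets $\Chat[t]{t}{\mu}{\param}\eqdef\cesp[\mu][\param]{\Chat[t]{t+1}{\mu}{\param}}{\mcf_t}$), the first move is to pull out the part that is already $\mcf_{t+\ell}$-measurable. Using the factorisation $\cdens[\mu]{\chunk{Y}{t}{t+\ell+1}}{\chunk{Y}{0}{t-1}}[\param]=\cdens[\mu]{\chunk{Y}{t}{t+\ell}}{\chunk{Y}{0}{t-1}}[\param]\,\cdens[\mu]{Y_{t+\ell+1}}{\chunk{Y}{0}{t+\ell}}[\param]$ and that $\supnorm{\ewghtfuncf[\param]{Y_t}}$ is measurable with respect to $\mcf_t\subseteq\mcf_{t+\ell}$, it remains to evaluate
\[
\cesp[\mu][\param]{\frac{\int \lambda(\rmd x_{t+1})\, g^\param(x_{t+1},Y_{t+1})\, \big(\Kunf[\param]{\chunk{Y}{t+2}{t+\ell+1}}\bigone\big)(x_{t+1})}{\cdens[\mu]{Y_{t+\ell+1}}{\chunk{Y}{0}{t+\ell}}[\param]}}{\mcf_{t+\ell}}\eqsp.
\]

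The key point is that the numerator above depends on the ``new'' observation $Y_{t+\ell+1}$ only through a single factor $g^\param(\cdot,Y_{t+\ell+1})$: by the semigroup identity \eqref{eq:def-chunk-Kun}, $\Kunf[\param]{\chunk{Y}{t+2}{t+\ell+1}}\bigone=\Kunf[\param]{\chunk{Y}{t+2}{t+\ell}}\big(\Kunf[\param]{Y_{t+\ell+1}}\bigone\big)$ with $\Kunf[\param]{Y_{t+\ell+1}}\bigone(x)=\int M^\param(x,\rmd z)\, g^\param(z,Y_{t+\ell+1})$. Since under $\PP^\param_\mu$ the conditional law of $Y_{t+\ell+1}$ given $\mcf_{t+\ell}=\sigma(\chunk{Y}{0}{t+\ell})$ admits the density $y\mapsto\cdens[\mu]{y}{\chunk{Y}{0}{t+\ell}}[\param]$ with respect to $\kappa$, this density cancels the denominator $\cdens[\mu]{Y_{t+\ell+1}}{\chunk{Y}{0}{t+\ell}}[\param]$, and the displayed conditional expectation equals $\int\kappa(\rmd y)\int\lambda(\rmd x_{t+1})\, g^\param(x_{t+1},Y_{t+1})\, \big(\Kunf[\param]{\chunk{Y}{t+2}{t+\ell}}h_y\big)(x_{t+1})$ with $h_y(x)=\int M^\param(x,\rmd z)\, g^\param(z,y)$. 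By Tonelli's theorem (all integrands are nonnegative) the $\kappa$-integral can be pushed through the positive operator $\Kunf[\param]{\chunk{Y}{t+2}{t+\ell}}$ onto $h_y$, and $\int\kappa(\rmd y)\, h_y(x)=\int M^\param(x,\rmd z)\int\kappa(\rmd y)\, g^\param(z,y)=\int M^\param(x,\rmd z)=1=\bigone(x)$, because $G^\param(z,\cdot)$ is a probability measure and $M^\param$ a Markov kernel. Hence the expectation reduces to $\int\lambda(\rmd x_{t+1})\, g^\param(x_{t+1},Y_{t+1})\, \big(\Kunf[\param]{\chunk{Y}{t+2}{t+\ell}}\bigone\big)(x_{t+1})$, and restoring the pulled-out factor $\supnorm{\ewghtfuncf[\param]{Y_t}}/\cdens[\mu]{\chunk{Y}{t}{t+\ell}}{\chunk{Y}{0}{t-1}}[\param]$ yields exactly $\Chat[t]{t+\ell}{\mu}{\param}$.

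The step needing the most care is this cancellation inside the conditional expectation: one must keep track of which factors are $\mcf_{t+\ell}$-measurable and confirm that $Y_{t+\ell+1}$ enters the numerator of $\Chat[t]{t+\ell+1}{\mu}{\param}$ solely through $g^\param(\cdot,Y_{t+\ell+1})$, so that the one-step predictive density cancels cleanly and the surviving $\kappa$-integral is exactly $\int g^\param(z,y)\,\kappa(\rmd y)=1$; the remainder is bookkeeping with Tonelli's theorem and the semigroup property \eqref{eq:def-chunk-Kun}. I would finally remark that the statement is to be read in the pointwise sense for nonnegative random variables --- the identity holds as an equality in $[0,\infty]$ --- since $\Chat[t]{t+\ell}{\mu}{\param}$ need not be $\PP^\param_\mu$-integrable (indeed it is not in the examples of \autoref{sec:examples}); in particular $\ell\mapsto\Chat[t]{t+\ell}{\mu}{\param}$ is a nonnegative supermartingale, which is all that will be needed.
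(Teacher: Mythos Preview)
Your proof is correct and follows essentially the same route as the paper: factor $\cdens[\mu]{\chunk{Y}{t}{t+\ell+1}}{\chunk{Y}{0}{t-1}}[\param]=\cdens[\mu]{\chunk{Y}{t}{t+\ell}}{\chunk{Y}{0}{t-1}}[\param]\,\cdens[\mu]{Y_{t+\ell+1}}{\chunk{Y}{0}{t+\ell}}[\param]$, cancel the one-step predictive density against the conditional law of $Y_{t+\ell+1}$, and integrate out the remaining $y_{t+\ell+1}$ using $\int \Kunf[\param]{y}\bigone\,\kappa(\rmd y)=M^\param\bigone=\bigone$. Your explicit check of adaptedness, your handling of the $\ell=0$ indexing, and your closing caveat that the identity should be read pointwise in $[0,\infty]$ (so that what is actually obtained and used downstream is a nonnegative supermartingale) are all valid refinements that the paper leaves implicit.
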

\begin{proof}
For all $\ell \geq 0$, 
\begin{multline*}
 \CPEdoup[\mu]{\param}{\Chat[t]{t+\ell+1}{\mu}{\param}}{\mcf_{t+\ell}}
= \supnorm{\ewghtfuncf[\param]{Y_t}} \int
\Bigg\{ \cdens[\mu]{y_{t+\ell+1}}{\chunk{Y}{0}{t+\ell}}[\param] \\
\times  \frac{ \int \lambda(\rmd x_{t+1}) g^\param(x_{t+1},Y_{t+1}) \Kun[\param]{\chunk{Y}{t+2}{t+\ell}}{x_{t+1}}{\rmd x_{t+\ell}}  \Kunf[\param]{y_{t+\ell+1}}\bigone(x_{t+\ell})}{\cdens[\mu]{\chunk{Y}{t}{t+\ell},y_{t+\ell+1}}{\chunk{Y}{0}{t-1}}[\param]}
  \kappa(\rmd y_{t+\ell+1}) \Bigg\}
\eqsp.
\end{multline*}
Combining this identity with
$$
\cdens[\mu]{\chunk{Y}{t}{t+\ell},y_{t+\ell+1}}{\chunk{Y}{0}{t-1}}[\param]=\cdens[\mu]{y_{t+\ell+1}}{\chunk{Y}{0}{t+\ell}}[\param]
\cdens[\mu]{\chunk{Y}{t}{t+\ell}}{\chunk{Y}{0}{t-1}}[\param] \eqsp,
$$
and $\int \Kunf[\param]{y_{t+\ell+1}}\bigone(x_{t+\ell}) \kappa(\rmd y_{t+\ell+1})=M^\param(x_{t+\ell},\Xset)=1$, we obtain
\begin{multline*}
\CPEdoup[\mu]{\param}{\Chat[t]{t+\ell+1}{\mu}{\param}}{\mcf_{t+\ell}} \\
= \frac{\supnorm{\ewghtfuncf[\param]{Y_t}} \int \lambda(\rmd x_{t+1}) g^\param(x_{t+1},Y_{t+1}) \Kunf[\param]{\chunk{Y}{t+2}{t+\ell}}\bigone(x_{t+1})  }{\cdens[\mu]{\chunk{Y}{t}{t+\ell}}{\chunk{Y}{0}{t-1}}[\param]}=\Chat[t]{t+\ell}{\mu}{\param}\eqsp,
\end{multline*}
which completes the proof.
\end{proof}

\begin{lemma} \label{lem:bound:NTU}
For all $0\leq \gamma<1$ and all $\ell  \in \nset$,
\begin{align}
&\PE[\param]{(\Bhat[t]{t+\ell}{\mu}{\param})^\gamma}[\mu]\leq \PE[\param]{(\Bbar[t]{t+\ell}{\mu}{\param})^\gamma}[\mu]\eqsp, \label{eq:ntu}\\
&\PE[\param]{(\Chat[t]{t+\ell}{\mu}{\param})^\gamma}[\mu]\leq \PE[\param]{(\Cbar[t]{t+\ell}{\mu}{\param})^\gamma }[\mu]\eqsp.\label{eq:nus}
\end{align}
\end{lemma}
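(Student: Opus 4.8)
The approach is to observe that $\Bbar[t]{t+\ell}{\mu}{\param}$ is exactly the conditional expectation of $\Bhat[t]{t+\ell}{\mu}{\param}$ given $\mcg \eqdef \sigma(\chunk{Y}{t}{t+\ell})$, and likewise $\Cbar[t]{t+\ell}{\mu}{\param} = \CPEdoup[\mu]{\param}{\Chat[t]{t+\ell}{\mu}{\param}}{\mcg}$, and then to invoke the conditional Jensen inequality for the concave map $x \mapsto x^\gamma$ on $\rset_+$, which is licit since $\gamma \in \coint{0,1}$.

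To make this precise, recall from \eqref{eq:rel-bar-B-hat-B} that $\Bhat[t]{t+\ell}{\mu}{\param} = W\,\Bbar[t]{t+\ell}{\mu}{\param}$ and $\Chat[t]{t+\ell}{\mu}{\param} = W\,\Cbar[t]{t+\ell}{\mu}{\param}$, with the common nonnegative factor $W \eqdef \dens[\mu,t]{\chunk{Y}{t}{t+\ell}}[\param]\big/\cdens[\mu]{\chunk{Y}{t}{t+\ell}}{\chunk{Y}{0}{t-1}}[\param]$, which is $\PP^\param_\mu$-a.s.\ well defined (under \ref{assum:m-g-positive} all the densities at play are strictly positive). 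Two facts are then needed. The first, immediate on inspecting \eqref{eq:def-bar-b-t-ell}--\eqref{eq:def-bar-c-t-ell}, is that $\Bbar[t]{t+\ell}{\mu}{\param}$ and $\Cbar[t]{t+\ell}{\mu}{\param}$ are $\mcg$-measurable, every factor occurring there ($\supnorm{\ewghtfuncf[\param]{Y_t}}$, $\supnorm{\Kunf[\param]{\chunk{Y}{t+1}{t+\ell}}\bigone}$, $\int \lambda(\rmd x_{t+1}) g^\param(x_{t+1},Y_{t+1}) \Kunf[\param]{\chunk{Y}{t+2}{t+\ell}}\bigone(x_{t+1})$, and $\dens[\mu,t]{\chunk{Y}{t}{t+\ell}}[\param]$) being a function of $\chunk{Y}{t}{t+\ell}$ only. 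The second, which is the heart of the matter, is that $\CPEdoup[\mu]{\param}{W}{\mcg} = 1$ $\PP^\param_\mu$-a.s. This is where one exploits that $\dens[\mu,t]{\cdot}[\param]$ is the $\chunk{Y}{t}{t+\ell}$-marginal of $\dens[\mu]{\cdot}[\param]$: writing $\cdens[\mu]{\chunk{Y}{t}{t+\ell}}{\chunk{Y}{0}{t-1}}[\param] = \dens[\mu]{\chunk{Y}{0}{t+\ell}}[\param]\big/\dens[\mu]{\chunk{Y}{0}{t-1}}[\param]$, the map $\chunk{y}{0}{t-1}\mapsto \dens[\mu]{\chunk{y}{0}{t+\ell}}[\param]\big/\dens[\mu,t]{\chunk{y}{t}{t+\ell}}[\param]$ is precisely the $\PP^\param_\mu$-conditional density of $\chunk{Y}{0}{t-1}$ given $\chunk{Y}{t}{t+\ell}$ w.r.t.\ $\kappa^{\otimes t}$, so integrating $W$ against it makes the $\dens[\mu]{\chunk{y}{0}{t+\ell}}[\param]$-factors cancel and leaves $\int \dens[\mu]{\chunk{y}{0}{t-1}}[\param]\,\kappa^{\otimes t}(\rmd\chunk{y}{0}{t-1}) = 1$. (Equivalently, $W$ is the Radon--Nikodym derivative of the product of the marginal laws of $\chunk{Y}{0}{t-1}$ and $\chunk{Y}{t}{t+\ell}$ w.r.t.\ their joint law under $\PP^\param_\mu$, and the two laws coincide on $\mcg$.) Combining the two facts gives $\CPEdoup[\mu]{\param}{\Bhat[t]{t+\ell}{\mu}{\param}}{\mcg} = \Bbar[t]{t+\ell}{\mu}{\param}$ and $\CPEdoup[\mu]{\param}{\Chat[t]{t+\ell}{\mu}{\param}}{\mcg} = \Cbar[t]{t+\ell}{\mu}{\param}$.

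With this in hand the conclusion is routine: the case $\gamma = 0$ is trivial, and for $0<\gamma<1$ concavity of $x\mapsto x^\gamma$ and the conditional Jensen inequality give $\CPEdoup[\mu]{\param}{(\Bhat[t]{t+\ell}{\mu}{\param})^\gamma}{\mcg} \le \bigl(\CPEdoup[\mu]{\param}{\Bhat[t]{t+\ell}{\mu}{\param}}{\mcg}\bigr)^\gamma = (\Bbar[t]{t+\ell}{\mu}{\param})^\gamma$; taking $\PE[\param]{}[\mu]$ on both sides (tower property) produces \eqref{eq:ntu}, and the identical argument with $\Chat$, $\Cbar$ in place of $\Bhat$, $\Bbar$ produces \eqref{eq:nus}. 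The main obstacle is the identity $\CPEdoup[\mu]{\param}{W}{\mcg} = 1$: one has to recognize $W$ as a likelihood ratio whose conditioning variable is exactly the one on which the relevant conditional and marginal laws agree. Everything else is bookkeeping and a single invocation of Jensen's inequality.
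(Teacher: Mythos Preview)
Your proof is correct. Both your argument and the paper's rest on the same two ingredients—Jensen's inequality for a concave power and the identity linking the conditional density $\cdens[\mu]{\chunk{Y}{t}{t+\ell}}{\chunk{Y}{0}{t-1}}[\param]$ to the marginal $\dens[\mu,t]{\chunk{Y}{t}{t+\ell}}[\param]$—but they are organized dually. The paper conditions on $\chunk{Y}{0}{t-1}$, writes the expectation as $\int \psi(\chunk{y}{t}{t+\ell})\,\PE[\param]{\{\cdens[\mu]{\chunk{y}{t}{t+\ell}}{\chunk{Y}{0}{t-1}}[\param]\}^{1-\gamma}}[\mu]\,\kappa^{\otimes(\ell+1)}(\rmd\chunk{y}{t}{t+\ell})$ via Fubini, and applies Jensen to $x\mapsto x^{1-\gamma}$ together with $\PE[\param]{\cdens[\mu]{\chunk{y}{t}{t+\ell}}{\chunk{Y}{0}{t-1}}[\param]}[\mu]=\dens[\mu,t]{\chunk{y}{t}{t+\ell}}[\param]$. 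You condition instead on $\chunk{Y}{t}{t+\ell}$, identify $\Bbar[t]{t+\ell}{\mu}{\param}=\CPEdoup[\mu]{\param}{\Bhat[t]{t+\ell}{\mu}{\param}}{\mcg}$ via $\CPEdoup[\mu]{\param}{W}{\mcg}=1$, and apply conditional Jensen to $x\mapsto x^\gamma$. Your route is arguably cleaner, since it names the structural fact (that $\Bbar$ is the conditional expectation of $\Bhat$) explicitly and avoids the Fubini manipulation; the paper's route has the minor advantage of never needing to discuss whether $\Bhat$ itself is integrable when invoking Jensen, since it applies the unconditional inequality to the bounded-below quantity $\{\cdens[\mu]{\chunk{y}{t}{t+\ell}}{\chunk{Y}{0}{t-1}}[\param]\}^{1-\gamma}$ for fixed $\chunk{y}{t}{t+\ell}$.
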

\begin{proof}
Using \eqref{eq:rel-bar-B-hat-B}, the proof of \eqref{eq:ntu} and \eqref{eq:nus} follow from the inequality:
\begin{equation} \label{eq:ineg-b}
\PE[\param]{\frac{\psi(\chunk{Y}{t}{t+\ell})}{\left\{\cdens[\mu]{\chunk{Y}{t}{t+\ell}}{\chunk{Y}{0}{t-1}}[\param]\right\}^\gamma}}[\mu]
\leq
\PE[\param]{\frac{\psi(\chunk{Y}{t}{t+\ell})}{\left\{\dens[\mu,t]{\chunk{Y}{t}{t+\ell}}[\param]\right\}^\gamma}}[\mu]\eqsp,
\end{equation}
which holds for any nonnegative measurable function $\psi: \Yset^{\ell+1} \to \rset^+$. We now show \eqref{eq:ineg-b}. Note first that, by applying the tower property of the conditional expectation and then the Tonelli-Fubini theorem, we get
\begin{align}
\nonumber
\PE[\param]{\frac{\psi(\chunk{Y}{t}{t+\ell})}{\left\{\cdens[\mu]{\chunk{Y}{t}{t+\ell}}{\chunk{Y}{0}{t-1}}[\param]\right\}^\gamma}}[\mu]
&= \PE[\param]{\CPEdoup[\mu]{\param}{ \frac{\psi(\chunk{Y}{t}{t+\ell})}{\left\{\cdens[\mu]{\chunk{Y}{t}{t+\ell}}{\chunk{Y}{0}{t-1}}[\param]\right\}^\gamma} }{\chunk{Y}{0}{t-1}} }[\mu] \\
\nonumber
&= \PE[\param]{\int \psi(\chunk{y}{t}{t+\ell}) \left\{\cdens[\mu]{\chunk{y}{t}{t+\ell}}{\chunk{Y}{0}{t-1}}[\param]\right\}^{1-\gamma}
  \kappa^{\otimes(\ell+1)}(\rmd y_{t:t+\ell})}[\mu] \\
  \label{eq:borne-B00alpha-1}
  &= \int \psi(\chunk{y}{t}{t+\ell}) \PE[\param]{\left\{\cdens[\mu]{\chunk{y}{t}{t+\ell}}{\chunk{Y}{0}{t-1}}[\param]\right\}^{1-\gamma}}[\mu]
  \kappa^{\otimes(\ell+1)}(\rmd y_{t:t+\ell}) \eqsp.
\end{align}
By the Jensen identity, $\PE[\param]{\left\{\cdens[\mu]{\chunk{y}{t}{t+\ell}}{\chunk{Y}{0}{t-1}}[\param]\right\}^{1-\gamma}}[\mu] \leq
\left\{ \PE[\param]{\cdens[\mu]{\chunk{y}{t}{t+\ell}}{\chunk{Y}{0}{t-1}}[\param]}[\mu] \right\}^{1-\gamma}$. On the other hand,
\begin{align}
  \PE[\param]{\cdens[\mu]{\chunk{y}{t}{t+\ell}}{\chunk{Y}{0}{t-1}}[\param]}[\mu]
  = \int \cdens[\mu]{\chunk{y}{t}{t+\ell}}{\chunk{y}{0}{t-1}}[\param] \dens[\mu]{\chunk{y}{0}{t-1}}[\param] \kappa^{\otimes t}(\rmd y_{0:t-1})
  = \dens[\mu,t]{\chunk{y}{t}{t+\ell}}[\param]
  \eqsp.
\end{align}
The proof of \eqref{eq:ineg-b} follows by combining the above relations.

\end{proof}
\begin{lemma} \label{lem:tilde-B-moment-alpha}
Assume \ref{assum:m-upper-bound} and \ref{assum:b-moment-bound}. Then, for all $0\leq \gamma<\alpha$,
$$
\sup_{t \geq 0} \sup_{\param  \in \Param}\PE[\param]{\left(\sup_{\ell \geq 0} \Bhat[t]{t+\ell}{\mu}{\param}\right)^\gamma}[\mu]<\infty\eqsp.
$$
where $\alpha$ is defined in \ref{assum:b-moment-bound}.
\end{lemma}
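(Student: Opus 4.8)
The plan is to split the supremum over the lag at the index $\ell_\star$ supplied by \ref{assum:b-moment-bound}: write $\sup_{\ell\geq0}\Bhat[t]{t+\ell}{\mu}{\param}=\max\bigl(\max_{0\leq\ell\leq\ell_\star-1}\Bhat[t]{t+\ell}{\mu}{\param},\;\sup_{\ell\geq\ell_\star}\Bhat[t]{t+\ell}{\mu}{\param}\bigr)$ and, using that $x\mapsto x^\gamma$ is nondecreasing and $(\max_i a_i)^\gamma\leq\sum_i a_i^\gamma$ for $\gamma\geq0$, bound the $\gamma$-th moment by the sum of the $\gamma$-th moments of the (finitely many) small-lag terms and of the large-lag tail $\sup_{\ell\geq\ell_\star}\Bhat[t]{t+\ell}{\mu}{\param}$. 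The case $\gamma=0$ is trivial, so take $0<\gamma<\alpha$. For each small lag $0\leq\ell\leq\ell_\star-1$, I would invoke \eqref{eq:ntu} of \autoref{lem:bound:NTU} to replace $\Bhat[t]{t+\ell}{\mu}{\param}$ by $\Bbar[t]{t+\ell}{\mu}{\param}$ inside the moment, then apply Jensen's inequality (licit because $\gamma/\alpha<1$) to get $\PE[\param]{(\Bbar[t]{t+\ell}{\mu}{\param})^\gamma}[\mu]\leq\bigl(\PE[\param]{(\Bbar[t]{t+\ell}{\mu}{\param})^\alpha}[\mu]\bigr)^{\gamma/\alpha}$, which is bounded uniformly in $t$ and $\param$ by \eqref{eq:bound-moment-B} of \ref{assum:b-moment-bound}. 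Summing over the $\ell_\star$ values of $\ell$ disposes of this part.

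For the large-lag tail, first use \ref{assum:m-upper-bound}: since $\Kunf[\param]{\chunk{Y}{t+1}{t+\ell}}\bigone(x)=\int M^\param(x,\rmd x_{t+1})g^\param(x_{t+1},Y_{t+1})\Kunf[\param]{\chunk{Y}{t+2}{t+\ell}}\bigone(x_{t+1})\leq\sigma_+\int\lambda(\rmd x_{t+1})g^\param(x_{t+1},Y_{t+1})\Kunf[\param]{\chunk{Y}{t+2}{t+\ell}}\bigone(x_{t+1})$ uniformly in $x$, and since $\Bhat[t]{t+\ell}{\mu}{\param}$ and $\Chat[t]{t+\ell}{\mu}{\param}$ have the same denominator, this yields $\Bhat[t]{t+\ell}{\mu}{\param}\leq\sigma_+\Chat[t]{t+\ell}{\mu}{\param}$ for every $\ell\geq1$, hence $\sup_{\ell\geq\ell_\star}\Bhat[t]{t+\ell}{\mu}{\param}\leq\sigma_+\sup_{\ell\geq\ell_\star}\Chat[t]{t+\ell}{\mu}{\param}$. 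By \autoref{lem:tilde-B-martingale}, $\{\Chat[t]{t+\ell}{\mu}{\param}\}_{\ell\geq0}$ is a nonnegative $(\PP^\param_\mu,\{\mcf_{t+\ell}\}_{\ell\geq0})$-martingale, so conditional Jensen for the concave map $x\mapsto x^\alpha$, together with an induction on $\ell\geq\ell_\star$ (the base case being $\PE[\param]{(\Chat[t]{t+\ell_\star}{\mu}{\param})^\alpha}[\mu]\leq\PE[\param]{(\Cbar[t]{t+\ell_\star}{\mu}{\param})^\alpha}[\mu]<\infty$ uniformly, by \eqref{eq:nus} of \autoref{lem:bound:NTU} and \eqref{eq:bound-moment-tilde-B} of \ref{assum:b-moment-bound}), shows that $\{(\Chat[t]{t+\ell}{\mu}{\param})^\alpha\}_{\ell\geq\ell_\star}$ is a genuine nonnegative supermartingale. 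It then remains to apply the sub-$L^1$ maximal inequality for a nonnegative supermartingale $(S_n)_{n\geq0}$, namely $\mathbb{E}\bigl[(\sup_{n}S_n)^q\bigr]\leq(1-q)^{-1}(\mathbb{E}[S_0])^q$ for $0<q<1$ (which follows from the weak maximal inequality $\PP(\sup_n S_n\geq\lambda)\leq\lambda^{-1}\mathbb{E}[S_0]$ via the layer-cake formula), with $S_\ell=(\Chat[t]{t+\ell}{\mu}{\param})^\alpha$ and $q=\gamma/\alpha$; this gives $\PE[\param]{(\sup_{\ell\geq\ell_\star}\Chat[t]{t+\ell}{\mu}{\param})^\gamma}[\mu]\leq(1-\gamma/\alpha)^{-1}\bigl(\PE[\param]{(\Cbar[t]{t+\ell_\star}{\mu}{\param})^\alpha}[\mu]\bigr)^{\gamma/\alpha}$, again uniform in $t$ and $\param$. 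Combining the two parts, all the bounds being uniform in $t\geq0$ and $\param\in\Param$, yields the claim.

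The main obstacle is controlling the supremum over the infinitely many lags $\ell\geq\ell_\star$, given that \ref{assum:b-moment-bound} furnishes only a fractional ($\alpha<1$) moment bound, and only at the single lag $\ell_\star$. A naive appeal to Doob's $L^1$ (or $L^p$, $p\geq1$) maximal inequality is unavailable, because the martingale $\{\Chat[t]{t+\ell}{\mu}{\param}\}_\ell$ need not be integrable — this already fails for the stochastic volatility model of \autoref{sec:examples}, where even $\PE[\param]{\Bbar{t}{\mu}{\param}}[\mu]=\infty$. The device that makes everything work is to transfer the problem to the $\alpha$-th power: $(\Chat[t]{t+\ell}{\mu}{\param})^\alpha$ is a bona fide nonnegative supermartingale whose value at $\ell_\star$ is uniformly $L^1$-bounded, and the sub-$L^1$ supermartingale maximal inequality is exactly strong enough to bound $\sup_{\ell\geq\ell_\star}\Chat[t]{t+\ell}{\mu}{\param}$ in $L^\gamma$ for every $\gamma<\alpha$; this is also why the strict inequality $\gamma<\alpha$ cannot be relaxed to $\gamma=\alpha$, the endpoint $q=1$ of that maximal inequality being genuinely false.
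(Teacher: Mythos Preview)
Your proof is correct and follows essentially the same route as the paper: split the supremum at $\ell_\star$, handle the finitely many small lags via \autoref{lem:bound:NTU} and \eqref{eq:bound-moment-B}, dominate $\Bhat[t]{t+\ell}{\mu}{\param}\leq\sigma_+\Chat[t]{t+\ell}{\mu}{\param}$ for $\ell\geq\ell_\star$ using \ref{assum:m-upper-bound}, and then control $\sup_{\ell\geq\ell_\star}(\Chat[t]{t+\ell}{\mu}{\param})^\gamma$ via the weak maximal inequality for the nonnegative supermartingale $\{(\Chat[t]{t+\ell}{\mu}{\param})^\alpha\}$ combined with the layer-cake formula. The only cosmetic difference is that the paper splits the layer-cake integral at $1$ and obtains the bound $1+\frac{\gamma}{\alpha-\gamma}\PE[\param]{(\Chat[t]{t+\ell_\star}{\mu}{\param})^\alpha}[\mu]$, whereas you split at the optimal point and obtain the tidier $(1-\gamma/\alpha)^{-1}\bigl(\PE[\param]{(\Chat[t]{t+\ell_\star}{\mu}{\param})^\alpha}[\mu]\bigr)^{\gamma/\alpha}$; both are of course equivalent for the purpose at hand.
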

\begin{proof}
Under \ref{assum:m-upper-bound}, we obtain by definitions of $\Bhat[t]{t+\ell}{\mu}{\param}$
 and $\Chat[t]{t+\ell}{\mu}{\param}$,
$$
\sup_{\ell \geq 0} \Bhat[t]{\ell}{\mu}{\param} \leq \sum_{\ell=0}^{\ell_\star-1}
\Bhat[t]{\ell}{\mu}{\param}
+ \sup_{\ell \geq \ell_\star} \Bhat[t]{t+\ell}{\mu}{\param} \leq
 \sum_{\ell=0}^{\ell_\star-1}\Bhat[t]{t+\ell}{\mu}{\param}
+ \sigma_+ \sup_{\ell \geq \ell_\star} \Chat[t]{t+\ell}{\mu}{\param}\eqsp,
$$
where $\sigma_+$  and $\ell_\star$ are defined in \ref{assum:m-upper-bound} and \ref{assum:b-moment-bound}, respectively. Then, by subadditivity of $u \mapsto u^\gamma$,
$$
\PE[\param]{\left(\sup_{\ell \geq 0} \Bhat[t]{t+\ell}{\mu}{\param} \right)^\gamma}[\mu] \leq \sum_{\ell=0}^{\ell_\star-1}\PE[\param]{(\Bhat[t]{t+\ell}{\mu}{\param})^\gamma}[\mu]
+  (\sigma_+)^\gamma \PE[\param]{\sup_{\ell \geq \ell_\star} (\Chat[t]{t+\ell}{\mu}{\param})^\gamma}[\mu]\eqsp.
$$
Applying \autoref{lem:bound:NTU} and \eqref{eq:bound-moment-B}, it is thus sufficient to
bound $$\PE[\param]{ \sup_{\ell \geq \ell_\star} (\Chat[t]{t+\ell}{\mu}{\param})^\gamma}[\mu] \eqsp.$$
Since by \autoref{lem:tilde-B-martingale}, $\{\Chat[t]{t+\ell}{\mu}{\param}\}_{k\geq 0}$ is a
$\{\mcf_{t+\ell}\}_{\ell \geq 0}$-martingale and $\alpha \in (0,1)$, we have that $\{(\Chat[t]{t+\ell}{\mu}{\param})^\alpha\}_{\ell \geq 0}$ is a nonnegative $\{\mcf_{t+\ell}\}_{\ell \geq 0}$-supermartingale. The Doob maximal inequality then applies: for all $a>0$,
$$
a \CPPdoup[\mu]{\param}{\sup_{\ell \geq \ell_\star} (\Chat[t]{t+\ell}{\mu}{\param})^\alpha \geq a}{\mcf_{t+\ell_\star-1}}
\leq \CPEdoup[\mu]{\param}{(\Chat[t]{t+\ell_\star}{\mu}{\param})^\alpha}{\mcf_{t+\ell_\star-1}} \eqsp.
$$
Take now the expectation in both sides of the previous inequality and set $\delta=a^{\gamma/\alpha}$. We obtain
\begin{align*}
\PPdoup[\mu]{\param}{\sup_{\ell \geq \ell_\star} (\Chat[t]{t+\ell}{\mu}{\param})^\gamma \geq \delta}
\leq \delta^{-\alpha/\gamma}
\PE[\param]{(\Chat[t]{t+\ell_\star}{\mu}{\param})^\alpha}[\mu] \eqsp.
\end{align*}
Combining this with the inequality $\PE{U} \leq 1+ \int_{1}^\infty \PP\left[ U>\delta \right]\,\rmd \delta$ which holds for all nonnegative random variable $U$, we obtain under \ref{assum:b-moment-bound}
\begin{align*}
  \PE[\param]{\sup_{\ell \geq \ell_\star} (\Chat[t]{t+\ell}{\mu}{\param})^\gamma}[\mu]
  &\leq 1 + \left(\int_1^\infty \delta^{-\alpha/\gamma} \rmd \delta \right)
  \PE[\param]{(\Chat[t]{t+\ell_\star}{\mu}{\param})^\alpha }[\mu] \\
  &=1+\frac{\gamma}{\alpha-\gamma}\PE[\param]{(\Chat[t]{t+\ell_\star}{\mu}{\param})^\alpha}[\mu] \eqsp.
\end{align*}
The proof follows by applying again \autoref{lem:bound:NTU} under \ref{assum:b-moment-bound}.
\end{proof}

\begin{proof}[Proof of \autoref{prop:N-depend-on-T}]
For simplicity we will use in this proof the notations
$\densstat{\chunk{Y}{0}{t}}[\param] \eqdef \dens[\pi^\param]{\chunk{Y}{0}{t}}[\param]$ and
$\cdensstat{\chunk{Y}{t}{s}}{\chunk{Y}{0}{t-1}}[\param]= \cdens[\pi^\param]{\chunk{Y}{t}{s}}{\chunk{Y}{0}{t-1}}[\param]$.
First note that
\begin{align*}
\PPstat^\tparam \left\{\frac{\densstat{\chunk{Y}{0}{T}}[\tparam]}{\dens[\mu]{\chunk{Y}{0}{T}}[\param_T]}> \rho\right\}&=
\PPstat^\tparam \left\{\ln \frac{\densstat{\chunk{Y}{0}{T}}[\tparam]}{\dens[\mu]{\chunk{Y}{0}{T}}[\param_T]}+\frac{\dens[\mu]{\chunk{Y}{0}{T}}[\param_T]}{\densstat{\chunk{Y}{0}{T}}[\tparam]}-1
> \ln \rho+\frac{\dens[\mu]{\chunk{Y}{0}{T}}[\param_T]}{\densstat{\chunk{Y}{0}{T}}[\tparam]}-1\right\}\\
&\leq \PPstat^\tparam \left\{\ln \frac{\densstat{\chunk{Y}{0}{T}}[\tparam]}{\dens[\mu]{\chunk{Y}{0}{T}}[\param_T]}+\frac{\dens[\mu]{\chunk{Y}{0}{T}}[\param_T]}{\densstat{\chunk{Y}{0}{T}}[\tparam]}-1
> \ln \rho-1\right\} \eqsp.
\end{align*}
Now, since for all $u > 0$, $\ln(u)+u^{-1}-1\geq 0$, we obtain for for all $\rho>\rme= \exp(1)$:
\begin{align*}
\PPstat^\tparam \left\{\frac{\densstat{\chunk{Y}{0}{T}}[\tparam]}{\dens[\mu]{\chunk{Y}{0}{T}}[\param_T]} > \rho\right\}&\leq \frac{1}{\ln \rho-1}
\PEstat[\tparam]{\ln \frac{\densstat{\chunk{Y}{0}{T}}[\tparam]}{\dens[\mu]{\chunk{Y}{0}{T}}[\param_T]}+\frac{\dens[\mu]{\chunk{Y}{0}{T}}[\param_T]}{\densstat{\chunk{Y}{0}{T}}[\tparam]}-1} \\
&= \frac{1}{\ln \rho-1}
\PEstat[\tparam]{\ln \frac{\densstat{\chunk{Y}{0}{T}}[\tparam]}{\dens[\mu]{\chunk{Y}{0}{T}}[\param_T]}} \eqsp.
\end{align*}
This implies that for all $M>0$ and all $\rho>\rme$,
\begin{align}
\PPstat^\tparam(\mineps{T}{N_T}^{-1}(\param_T)>M) &\leq \PE[\param_T]{\frac{\densstat{\chunk{Y}{0}{T}}[\tparam]}{\dens[\mu]{\chunk{Y}{0}{T}}[\param_T]}
\1_{\left\{\frac{\densstat{\chunk{Y}{0}{T}}[\tparam]}{\dens[\mu]{\chunk{Y}{0}{T}}[\param_T]}\leq \rho \right\}}
\1_{\left\{\mineps{T}{N_T}^{-1}(\param_T)>M \right\}}}[\mu]+\PPstat^\tparam \left\{\frac{\densstat{\chunk{Y}{0}{T}}[\tparam]}{\dens[\mu]{\chunk{Y}{0}{T}}[\param_T]}> \rho\right\} \nonumber\\
&\leq \rho \PPdoup[\mu]{\param_T}{\mineps{T}{N_T}^{-1}(\param_T)>M}+ \frac{1}{\ln \rho-1} \PEstat[\tparam]{\ln \frac{\densstat{\chunk{Y}{0}{T}}[\tparam]}{\dens[\mu]{\chunk{Y}{0}{T}}[\param_T]}} \nonumber\\
&\leq \rho \PPdoup[\mu]{\param_T}{\mineps{T}{N_T}^{-1}(\param_T)>M}+ \frac{1}{\ln \rho-1}\left( \sup_{T \geq 0}\PEstat[\tparam]{\ln \frac{\densstat{\chunk{Y}{0}{T}}[\tparam]}{\dens[\mu]{\chunk{Y}{0}{T}}[\param_T]}} \right) \eqsp. \label{eq:one}
\end{align}
We consider first  the last term of the \rhs. Note first that, by the tower property,
\[
\PEstat[\tparam]{\ln \frac{\cdensstat{\chunk{X}{0}{T}}{\chunk{Y}{0}{T}}[\tparam]}{\cdens[\mu]{\chunk{X}{0}{T}}{\chunk{Y}{0}{T}}[\param_T]}} =\PEstat[\tparam]{
\idotsint
\left(\ln \frac{\cdensstat{\chunk{x}{0}{T}}{\chunk{Y}{0}{T}}[\tparam]}{\cdens[\mu]{\chunk{x}{0}{T}}{\chunk{Y}{0}{T}}[\param_T]}\right)
\cdensstat{\chunk{x}{0}{T}}{\chunk{Y}{0}{T}}[\tparam]
\prod_{i=0}^T \lambda(\rmd x_i)
} \geq 0
\]
because this quantity is the expectation under the stationary distribution of a Kullback-Leibler divergence.
This implies that
\begin{equation*}
\PEstat[\tparam]{\ln \frac{\densstat{\chunk{Y}{0}{T}}[\tparam]}{\dens[\mu]{\chunk{Y}{0}{T}}[\param_T]}} \leq \PEstat[\tparam]{\ln \frac{\densstat{\chunk{Y}{0}{T}}[\tparam]}{\dens[\mu]{\chunk{Y}{0}{T}}[\param_T]}}+\PEstat[\tparam]{\ln \frac{\cdensstat{\chunk{X}{0}{T}}{\chunk{Y}{0}{T}}[\tparam]}{\cdens[\mu]{\chunk{X}{0}{T}}{\chunk{Y}{0}{T}}[\param_T]}}= \PEstat[\tparam]{\ln \frac{\densstat{\chunk{X}{0}{T},\chunk{Y}{0}{T}}[\tparam]}{\dens[\mu]{\chunk{X}{0}{T},\chunk{Y}{0}{T}}[\param_T]}}.
\end{equation*}
On the other hand, using
\begin{multline*}
\PEstat[\tparam]{\ln \frac{\densstat{\chunk{X}{0}{T},\chunk{Y}{0}{T}}[\tparam]}{\dens[\mu]{\chunk{X}{0}{T},\chunk{Y}{0}{T}}[\param_T]}} \\=
\PEstat[\tparam]{\ln \left(\frac{ \pi^{\tparam}(X_0) g^{\tparam}(X_0,Y_0) }{\mu(X_0) g^{\param_T}(X_0,Y_0)} \right)}
+T \PEstat[\tparam]{\ln\left(\frac{m^{\tparam}(X_0,X_1)g^{\tparam}(X_1,Y_1) }
{ m^{\param_T}(X_0,X_1)g^{\param_T}(X_1,Y_1)}\right)}\eqsp,
\end{multline*}
we obtain, under \eqref{eq:nuit-de-singapour} and \eqref{eq:nuit-de-singapour-2}, that
\begin{equation}
\label{eq:bound-sup-T-exp}
\sup_{T \geq 0}\PEstat[\tparam]{\ln \frac{\densstat{\chunk{Y}{0}{T}}[\tparam]}{\dens[\mu]{\chunk{Y}{0}{T}}[\param_T]}}<\infty\eqsp.
\end{equation}
Assume first that
\begin{equation}\label{eq:two}
\limsup_{M  \to \infty}  \sup_{T \geq 0}\PPdoup[\mu]{\param_T}{\mineps{T}{N_T}^{-1}(\param_T)>M}=0\eqsp.
\end{equation}
The proof of the tightness of $\{\mineps{T}{N_T}^{-1}(\param_T)\}_{T \geq0}$ then follows by plugging \eqref{eq:bound-sup-T-exp} into \eqref{eq:one} and by noting that \eqref{eq:one} holds for all $\rho>\rme$, combined with \eqref{eq:two}.
To complete the proof, it thus remains to show \eqref{eq:two}. Rewriting the definition \eqref{eq:def-epsilon}, we obtain
\begin{equation*}
\mineps{T}{N_T}^{-1}(\param_T) \leq
\prod_{t=0}^T \frac{2 B_t^{\param_T}+N_T-2}{N_T-1} = \exp\left\{\sum_{t=0}^T \ln\left(\frac{2 B_t^{\param_T}+N_T-2}{N_T-1}\right)\right\}
\leq \exp\left\{\sum_{t=0}^T \frac{2 B_t^{\param_T}-1}{N_T-1}\right\}  \eqsp.
\end{equation*}
where $B_t^\param\eqdef \sup_{\ell \geq 0} \Bhat[t]{t+\ell}{\mu}{\param}$. This implies that for $M>1$, 
$$
\PPdoup[\mu]{\param_T}{\mineps{T}{N_T}^{-1}(\param_T)>M} \leq \PPdoup[\mu]{\param_T}{\sum_{t=0}^T \frac{2 B_t^{\param_T}-1}{N_T-1}>\ln M}
\leq \frac{1}{(\ln M)^\gamma} \PE[\param_T]{\left(\sum_{t=0}^T \frac{2 B_t^{\param_T}-1}{N_T-1}\right)^\gamma}[\mu] \eqsp.
$$
The proof of \eqref{eq:two} follows by noting that $N_T\sim T^{1/\gamma}$ and by using
$$
\PE[\param_T]{\left(\frac{\sum_{t=0}^T 2B_t^{\param_T}-1}{T^{1/\gamma}} \right)^\gamma}[\mu]
\leq \PE[\param_T]{\frac{\sum_{t=0}^T (2B_t^{\param_T})^\gamma}{T}}[\mu] \leq 2^\gamma \sup_{t \geq 0} \sup_{\param  \in \Param}\PE[\param]{(B_{t}^\param)^\gamma}[\mu]<\infty \eqsp,
$$
where the last inequality follows from \autoref{lem:tilde-B-moment-alpha}.
\end{proof}

\section*{Author address}
\noindent
{Fredrik Lindsten \\
Division of Automatic Control \\
Link\"oping University \\
SE--581 83 Link\"oping, Sweden\\
E-mail: \texttt{lindsten@isy.liu.se}\par}

\bibliographystyle{Chicago}
\bibliography{references}

\end{document}